\def\r{\mathbb{R}}
\newtheorem{theorem}{Theorem}[section]
\newtheorem{corollary}[theorem]{Corollary}
\newtheorem{definition}[theorem]{Definition}
\newtheorem{lemma}[theorem]{Lemma}
\newtheorem{remark}[theorem]{Remark}
\newtheorem{proposition}[theorem]{Proposition}
\title{The translating soliton equation}
\author{Rafael L\'opez}
\thanks{Partially supported by the grant no. MTM2017-89677-P, MINECO/AEI/FEDER, UE.}
\address{Departamento de Geometr\'{\i}a y Topolog\'{\i}a\\ Instituto de Matem\'aticas (IEMath-GR)\\
 Universidad de Granada\\
 18071 Granada, Spain}
\email{rcamino@ugr.es}
\begin{document}
%\mainmatter              % start of a contribution
%

\begin{abstract}
We give an analytic approach to the translating soliton equation with a special emphasis in the   study of the Dirichlet problem in  convex domains of the plane.
\end{abstract}
\keywords{translating soliton, maximum principle, Dirichlet problem, Perron method}

\maketitle              % typeset the title of the contribution

%

%%%%%%%%%%%%%%%%%%%%%%
\section{Historical introduction and motivation}
%%%%%%%%%%%%%%%%%%%%%%%%%%%%uniqueness del punto crítico

In this paper we consider the   equation of mean curvature type
\begin{equation}\label{eqs}
\mbox{div}\left(\frac{Du}{\sqrt{1+|Du|^2}}\right)=\frac{1}{\sqrt{1+|Du|^2}}
\end{equation}
in  a smooth domain $\Omega\subset\r^2$, where  $u\in C^2(\Omega)\cap C^0(\overline{\Omega})$. We   call (\ref{eqs})   the {\it translating soliton equation}. The geometry behind this equation is the following. Let $(x,y,z)$ be the canonical coordinates in  Euclidean space $(\r^3,\langle,\rangle)$ and denote $\Sigma_u=\{(x,y,u(x,y)):  (x,y)\in\Omega\}$ the graph of a function $u$.  The left-hand side of (\ref{eqs}) is twice the mean curvature $H$ of $\Sigma_u$ at each point $(x,y,u(x,y))$. Here $H$ is the average of the principal curvatures    calculated with respect to    the unit normal vector field 
$$N=\frac{1}{\sqrt{1+|Du|^2}}(-Du,1).$$
Hence   the right-hand side of (\ref{eqs}) is the $z$-coordinates of $N$. 
Consequently, a surface  in Euclidean space satisfies locally the translating soliton equation if and only if the mean curvature at each point   is the half of the cosine of the angle that makes $N$ with the vertical direction $\vec{a}=(0,0,1)$.   

As far as the author knows, it was S. Bernstein in 1910 the first author that studied equation (\ref{eqs}) in a couple of papers \cite{be1,be2} in the context of the solvability of the Dirichlet problem of   elliptic equations. In \cite[p. 240]{be1}, the translating soliton equation  appears numbering as (6) and Bernstein names    {\it l'\'equation des surfaces, dont la courbure en chaque point is proportionnelle (\'egale) au cosinus de l'angle de la normale en ce point avec l'axe des $z$}. On the other hand, in \cite[p. 515]{be2}  Bernstein considers     a family of equations numbered as (2')   in classical notation  
\begin{equation}\label{eqs2}
 (1+q^2)r-2pqs+(1+p^2)t=(1+p^2+q^2)^{n/2},
 \end{equation}
where $n$ is an integer number. In particular, for $n=2$ this equation coincides with (\ref{eqs}). The Dirichlet problem consists into find a smooth solution of (\ref{eqs2}) with boundary data
\begin{equation}\label{eqsb}
u=\varphi \quad \mbox{on $\partial\Omega$},
\end{equation}
where $\varphi\in C^0(\partial\Omega)$.  Bernstein proved that (\ref{eqs2})-(\ref{eqsb}) is solvable for arbitrary analytic functions $\varphi$ when $\Omega$ is an analytic convex  domain and  $n\leq 2$. In particular, this result holds for the translating soliton equation.

Sixty years later, the second approach to equation (\ref{eqs}) is due to J. Serrin. In the eighty-pages article \cite{se}, Serrin gave   a systematic treatment of the Dirichlet problem for a large  class of  quasilinear non-uniformly second order elliptic equations. Following the Leray-Schauder fixed point theorem and H\"{o}lder estimates theory of Ladyzenskaja and Ural'ceva, Serrin establishes the necessary and sufficient conditions for the solvability of the Dirichlet problem for arbitrary boundary data. Possibly, the most known result of \cite{se} is  the case of the  constant mean curvature equation, that is, when the right-hand side of (\ref{eqs}) is replaced by a constant $2H$. In such a case,  the Dirichlet problem has a   solution for arbitrary smooth boundary data $\varphi$ if and only if   the   curvature $\kappa$ of $\partial\Omega$ with respect to the inward normal direction satisfies     $\kappa\geq 2|H|$. If the solution exists, it is unique. See   \cite{se2} when the boundary $\partial\Omega$ is not necessarily smooth. 

However, the article \cite{se} covers many other types of quasilinear elliptic equations and this is the situation of the translating soliton equation.  Exactly in pages 477--478, Serrin considers two families of quasilinear elliptic equations and one of them coincides with (\ref{eqs2}). The   equation (96) of \cite{se} is
\begin{equation}\label{eqs3}
 (1+q^2)r-2pqs+(1+p^2)t=2H(1+p^2+q^2)^{n/2},
 \end{equation}
where now $H$ and $n$ are two real constants: recall that in  (\ref{eqs2}),      $n$ is     an integer number. Notice that  if  $n=3$, the expression (\ref{eqs3}) is the constant mean curvature equation. As a consequence of the results previously obtained,  Serrin proves  the following existence result (\cite[p. 478]{se}).

\begin{theorem}\label{t1} Let $\Omega\subset\r^2$ be a bounded   $C^2$-domain. Then (\ref{eqs2})-(\ref{eqsb}) is solvable for arbitrarily given $C^2$ function $\varphi$ 
\begin{enumerate}
\item when $n\leq 2$, if and only if $\kappa\geq 0$, and  
\item when $2<n<3$, if and only if $\kappa>0$. 
\end{enumerate} 
When $n>3$, the Dirichlet problem is not generally solvable, whatever the domain.
\end{theorem}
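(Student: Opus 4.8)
The plan is to prove Theorem~\ref{t1} by the method of continuity (the Leray--Schauder scheme), so that the whole matter reduces to a family of a priori estimates, with the hypotheses on $\kappa$ entering only one of them. First I would put (\ref{eqs2}) in divergence form: dividing by $(1+|Du|^2)^{3/2}$ it becomes the mean-curvature-type equation $\mbox{div}(Du/\sqrt{1+|Du|^2})=f(|Du|)$ with $f(s):=(1+s^2)^{(n-3)/2}$, which is uniformly elliptic once the gradient is controlled. Embed it in the family $\mbox{div}(Du_t/\sqrt{1+|Du_t|^2})=t\,f(|Du_t|)$ in $\Omega$, $u_t=t\varphi$ on $\partial\Omega$, $t\in[0,1]$, with $u_0\equiv 0$. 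The set of $t$ for which a $C^{2,\beta}(\overline\Omega)$ solution exists is nonempty; it is open because the linearization of the operator is uniformly elliptic with no zero-order term, hence invertible by the maximum principle, so the implicit function theorem applies; and it is closed as soon as one has a $t$-independent bound $\|u_t\|_{C^{1,\beta}(\overline\Omega)}\le C$, by Schauder bootstrapping and Arzel\`a--Ascoli. So I need, uniformly in $t$: \emph{(a)} a sup bound, \emph{(b)} an interior gradient bound, \emph{(c)} a boundary gradient bound, and \emph{(d)} a global gradient-H\"older bound.

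For \emph{(a)}, the upper bound is free from the maximum principle: since $\mbox{div}(Du_t/\sqrt{1+|Du_t|^2})=t\,f(|Du_t|)>0$, at an interior maximum one would get $\Delta u_t\le 0$ equal to a positive number, a contradiction, so $\max_{\overline\Omega}u_t=\max_{\partial\Omega}(t\varphi)$. For the lower bound, when $n<3$ the radial ODE attached to $\mbox{div}(Dv/\sqrt{1+|Dv|^2})=f(|Dv|)$ possesses an \emph{entire} ``bowl''-type solution $v$ (asymptotic to $|x|^{1+1/(3-n)}$), and a downward translate of $v$ is a subsolution of the $t$-problem lying below $t\varphi$ on $\partial\Omega$; comparison gives $u_t\ge -C(\Omega)-\|\varphi\|_\infty$. (For $n=3$, where no entire radial solution exists, one uses spherical caps and the hypothesis on $\kappa$.) Estimate \emph{(b)} is the classical interior gradient estimate for equations of mean-curvature type (Bombieri--De Giorgi--Miranda, Ladyzenskaja--Ural'ceva, Trudinger, Korevaar): the structure conditions hold because $f(s)\le 1$ when $n\le 3$, and $\sup_{\Omega'}|Du_t|\le C(\mbox{dist}(\Omega',\partial\Omega),\|u_t\|_\infty)$. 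Combined with \emph{(c)} this is a global gradient bound; the equation is then uniformly elliptic with smooth bounded coefficients, so the De Giorgi--Nash--Moser / Ladyzenskaja--Ural'ceva theory yields \emph{(d)} (the boundary part using the $C^2$ regularity of $\partial\Omega$ and $\varphi$), and Schauder closes the scheme, giving a solution at $t=1$.

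The crux is \emph{(c)}, and it is here that the trichotomy in $n$ is born. Near $\xi\in\partial\Omega$ I would seek barriers $w^{\pm}=\varphi\pm\psi(d)$, $d=\mbox{dist}(\cdot,\partial\Omega)$, with $\psi$ smooth, increasing, strictly concave, $\psi(0)=0$, and $\psi\ge 2\|u_t\|_\infty$ at the inner edge of a boundary strip. Using that on that strip $D^2d$ has eigenvalue $0$ along $\nabla d$ and $-\kappa/(1-\kappa d)$ in the tangential direction, the requirement that $w^{+}$ be a supersolution (resp.\ $w^{-}$ a subsolution) reduces, in the regime $\psi'\to\infty$, to a differential inequality whose leading balance is $-\kappa/(1-\kappa d)\le t\,f(\psi')$ (resp.\ $\kappa/(1-\kappa d)\ge t\,f(\psi')$), modulo terms of order $(1+\|\varphi\|_{C^2})/\psi'$ coming from the concavity term and from $\varphi$. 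The supersolution inequality holds for every $\kappa\ge 0$ and every $n$; the binding constraint is the subsolution one, which asks $\kappa/(1-\kappa d)$ to dominate $f(\psi')$ throughout the strip, the worst place being where $\kappa$ vanishes and $\psi'$ is only moderately large. There the inequality forces the concavity modulus of $\psi$ to control $\sup_{s}s\,f(s)$. Since $f(s)=(1+s^2)^{(n-3)/2}$, one has $s\,f(s)\sim s^{n-2}$: if $n\le 2$ then $s\,f(s)$ is bounded, the barrier survives at the zeros of $\kappa$, and $\kappa\ge 0$ suffices, proving (1); if $2<n<3$ then $s\,f(s)\to\infty$, the required concavity modulus and hence the resulting gradient bound diverge at a zero of $\kappa$, so one must require $\kappa\ge\kappa_0>0$ (available, by compactness of $\partial\Omega$, from the pointwise $\kappa>0$), proving sufficiency in (2); if $n=3$ then $f\equiv 1$ and the threshold becomes $\kappa\ge 1$, the classical constant-mean-curvature case not covered by the theorem's ``$\kappa>0$''; and if $n>3$ then $f(\psi')\to\infty$, no subsolution barrier can survive large boundary data, and (by the argument below) neither can any solution, which is the final assertion. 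With $w^{\pm}$ built, trapping $u_t(\xi+d\nu)$ between $w^{\pm}(\xi+d\nu)$ and letting $d\downarrow 0$ bounds $\partial_\nu u_t(\xi)$, hence $\sup_{\partial\Omega}|Du_t|$.

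Finally, the necessity of the conditions on $\kappa$, and the failure for $n>3$, I would obtain by contradiction: at a point $\xi_0\in\partial\Omega$ where the curvature hypothesis fails --- $\kappa(\xi_0)<0$ in case (1), $\kappa(\xi_0)=0$ in case (2), any $\xi_0$ if $n>3$ --- take $\varphi$ equal to a large constant near $\xi_0$ and small elsewhere, and push a fixed explicit lower comparison surface (a grim-reaper cylinder, a spherical cap, or the relevant radial subsolution) upward against the putative graph of a solution $u$; the interior geometry at $\xi_0$ then forces $|Du(\xi_0)|$ past any prescribed bound as the constant grows, which is incompatible with $u\in C^1(\overline\Omega)$. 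I expect this necessity direction to be the easier one. The main obstacle is the sufficiency of \emph{(c)}: constructing the barriers with enough bookkeeping to see precisely the thresholds $n=2$ and $n=3$ --- exactly the delicate part of the quasilinear machinery that \cite{se} develops in full, everything else being the now-standard theory of the Dirichlet problem for equations of mean-curvature type.
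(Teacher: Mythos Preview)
The paper does not give its own proof of Theorem~\ref{t1}: the result is quoted from Serrin \cite[p.~478]{se} and left there. What the paper does prove in detail is the special case $n=2$ (Corollary~\ref{c1}, carried out as Theorem~\ref{t-ex} in Section~\ref{sec4}), and only the sufficiency direction. So your proposal is more ambitious than anything the paper attempts; for the full range of $n$ and for the necessity direction there is nothing to compare against except the reference to \cite{se}.

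Restricting to $n=2$, your outline and the paper's proof follow the same architecture --- method of continuity, $C^0$ estimates via a bowl-type subsolution and the maximum principle, reduction of the interior gradient to the boundary, and distance-function barriers along $\partial\Omega$ --- but the implementations differ in two places worth noting. First, the paper obtains the interior gradient reduction not by invoking Bombieri--De Giorgi--Miranda or Korevaar but by the elementary trick of differentiating the equation: each $u_i$ satisfies a linear elliptic equation with no zero-order term (Proposition~\ref{pr-31}(3)), so $\sup_\Omega|Du|=\max_{\partial\Omega}|Du|$ exactly. Second, for the boundary gradient the paper does not build a \emph{pair} of barriers $\varphi\pm\psi(d)$. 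The upper barrier is taken to be the minimal graph $v^0$ with the same boundary data, which is automatically a strict supersolution since $Q[v^0]=-(1+|Dv^0|^2)<0$; only the lower barrier is of the form $-h(d)+\varphi$, with the explicit choice $h(t)=a\log(1+bt)$. This asymmetry is what makes the convexity hypothesis $\kappa\ge0$ (rather than $\kappa>0$) suffice cleanly: the estimate $a_{ij}d_{i;j}\le(1+|Dw|^2-|D\varphi|^2)\Delta d\le 0$ uses only $\Delta d\le 0$. Your symmetric barrier analysis reaches the same conclusion but through a more delicate balancing of the terms, and your heuristic identification of the threshold via $s\,f(s)\sim s^{n-2}$ is exactly the structural condition that drives Serrin's general theorem, though making it rigorous requires more bookkeeping than your sketch displays.

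Your necessity argument is plausible but is not addressed in the paper at all; for that you are on your own, and the sketch you give (pushing comparison surfaces at a bad boundary point to force $|Du|\to\infty$) is the right idea but would need to be made precise.
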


Definitively, for the translating soliton equation, we conclude:
\begin{corollary}\label{c1}
 Let $\Omega\subset\r^2$ be a bounded   $C^2$-domain. Then (\ref{eqs})-(\ref{eqsb}) is solvable for arbitrarily given $C^2$ function $\varphi$   if and only if the inward curvature satisfies $\kappa\geq 0$.
\end{corollary}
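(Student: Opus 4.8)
The plan is simply to recognize Corollary \ref{c1} as the instance $n=2$ of Theorem \ref{t1}. First I would rewrite the translating soliton equation (\ref{eqs}) in classical non-divergence form. Writing $p=u_x$, $q=u_y$, $r=u_{xx}$, $s=u_{xy}$, $t=u_{yy}$ and expanding the divergence,
\[
\mbox{div}\left(\frac{Du}{\sqrt{1+|Du|^2}}\right)=\frac{(1+q^2)r-2pqs+(1+p^2)t}{(1+p^2+q^2)^{3/2}},
\]
so that multiplying (\ref{eqs}) by $(1+p^2+q^2)^{3/2}$ shows it is equivalent to
\[
(1+q^2)r-2pqs+(1+p^2)t=(1+p^2+q^2)=(1+p^2+q^2)^{2/2}.
\]
This is precisely equation (\ref{eqs2}) with $n=2$, exactly as already noted after (\ref{eqs2}). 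Hence the Dirichlet problem (\ref{eqs})-(\ref{eqsb}) coincides with (\ref{eqs2})-(\ref{eqsb}) in the case $n=2$.

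Then I would invoke Theorem \ref{t1}. Since $n=2\le 2$, part (1) of that theorem applies: for a bounded $C^2$-domain $\Omega$, the problem (\ref{eqs2})-(\ref{eqsb}) with $n=2$ is solvable for every $C^2$ function $\varphi$ if and only if the curvature $\kappa$ of $\partial\Omega$ with respect to the inward normal satisfies $\kappa\ge 0$. Transporting this equivalence back through the identification of the two equations yields precisely the statement of Corollary \ref{c1}.

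There is essentially no genuine obstacle here: the only things to verify are the elementary algebraic rewriting above and the fact that the ``inward curvature'' appearing in the corollary is the same quantity $\kappa$ used in Serrin's theorem. It is worth emphasizing that the sharp content of this corollary is already contained in Theorem \ref{t1}; the self-contained analytic treatment promised in the abstract — via the maximum principle and Perron's method on convex planar domains — is carried out in the sections that follow, and it is there, rather than in this immediate specialization, that the real work lies.
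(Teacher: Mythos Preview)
Your proposal is correct and matches the paper's treatment exactly: the corollary is stated immediately after Theorem~\ref{t1} as its specialization to $n=2$, with the identification of (\ref{eqs}) and (\ref{eqs2}) for $n=2$ already noted in the text. The paper offers no separate argument beyond this, so your explicit algebraic verification and invocation of part~(1) of Theorem~\ref{t1} constitute precisely the intended proof.
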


\begin{remark}
\begin{enumerate}
\item The case $n=3$ in (\ref{eqs3}), which does not appear in Theorem \ref{t1}, is the constant mean curvature equation, where  the solvability occurs if and only if  $\kappa\geq 2|H|$.
\item Serrin generalizes the result of Bernstein in \cite{be1} changing analyticity  by smoothness of $\Omega$.
\item The results of \cite{se}   for the equation (\ref{eqsb}) are established in arbitrary dimension.  
\end{enumerate}
\end{remark}

Possibly due to the lengthy  paper \cite{se}, equation (\ref{eqs}) seemed to be forgotten in the literature.  It is in 80's when the translating soliton equation appears  in two different contexts at the same time. 

Firstly in the singularity theory of the mean curvature flow  of Huisken and Ilmanen \cite{hsi1,il}. A {\it translating soliton} is a surface $\Sigma\subset\r^3$ that is  a solution of the mean curvature flow when $\Sigma$ evolves purely by translations along some direction $\vec{a}\in\r^3\setminus\{0\}$. In other words, $\Sigma$ is a translating soliton  if    $\Sigma+t\vec{a}$, $t\in \r$, satisfies that fixed $t$,   the normal component of the velocity vector $\vec{a}$   at each point is equal to the mean curvature at that point. For the initial surface $\Sigma$, this implies that $2H=\langle N,\vec{a}\rangle$.  After a change of coordinates, if $\vec{a}=(0,0,1)$, then    $2H=\langle N,\vec{a}\rangle$ coincides  locally with (\ref{eqs}).  Translating solitons appear    in the singularity theory of the mean curvature flow. After scaling, near type II-singularity points on the surfaces evolved by mean curvature vector,  Huisken, Sinestrari and White demonstrated  that the limit flow with initial convex surface is a convex translating soliton (\cite{hsi1,hsi2,wh0}). On the other hand,   Ilmanen observed that $\Sigma$ translates with velocity $\vec{a}$ if and only if it is stationary for the weighted area $\int_\Sigma e^{\langle p,\vec{a}\rangle}dA$. 
 In fact, $2H=\langle N,\vec{a}\rangle$ is the Euler-Lagrange equation for this functional and thus $\Sigma$ is a minimal surface with respect to the Riemannian metric $e^{\langle p,\vec{a}\rangle}\langle,\rangle$.

 From the last viewpoint, equation (\ref{eqs}) links with the   theory of manifolds with density of   Gromov (\cite{gr}).  Exactly, let   $e^\phi$ be a   positive density function in $\r^3$, $\phi\in C^\infty(\r^3)$, which serves as a weight for  the volume and the surface area. Note that this is not equivalent to scaling the metric conformally by $e^\phi$ because the area and the volume change with different scaling factors.  For a given compactly supported variation of $\Sigma_t$ of $\Sigma$ that fixes the boundary $\partial\Sigma$ of $\Sigma$, let    $A_\phi(t)$ and $V_\phi(t)$ denote the weighted area and the enclosed weighted volume of $\Sigma_t$, respectively. Then the first variations of $A_\phi(t)$ and $V_\phi(t)$ are  
$$A'_\phi(0)=-2\int_\Sigma H_\phi \langle N,\xi\rangle\  dA_\phi,\quad V_\phi'(0)=\int_\Sigma \langle N,\xi\rangle\ dA_\phi,$$
where $\xi$ is the   variational vector field of $\Sigma_t$ and $H_\phi=H-\langle N,\nabla\phi\rangle/2$ is called the {\it weighted mean curvature}. If we choose   $\phi(p)=\langle p,\vec{a}\rangle$, $p\in\r^3$, then 
\begin{equation}\label{hf}
H_\phi=H-\frac{\langle N,\vec{a}\rangle}{2}.
\end{equation}
   We say that $\vec{a}$ is the {\it density vector}. Thus we have the next characterizations of a translating soliton.

\begin{proposition} Let $\Sigma$ be a surface in $\r^3$. The following statements are equivalent:
\begin{enumerate}
\item $\Sigma$ satisfies locally (\ref{eqs}).
\item $\Sigma$ translates with velocity $\vec{a}$ by means of the mean curvature flow.
\item $\Sigma$ is a critical point of the area $A_\phi$ for the density $\phi(p)=\langle p,\vec{a}\rangle$
\end{enumerate}
\end{proposition}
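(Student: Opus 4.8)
The plan is to establish the three-way equivalence by proving the cycle $(1)\Leftrightarrow(2)$ and $(2)\Leftrightarrow(3)$, since all three are essentially restatements of the single geometric identity $2H=\langle N,\vec a\rangle$ along $\Sigma$, with $\vec a=(0,0,1)$. The only genuine content is translating the analytic form \eqref{eqs} of this identity into its flow-theoretic and variational incarnations; the rest is unwinding definitions.

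First I would prove $(1)\Leftrightarrow(2)$. For the implication from the flow characterization, suppose $\Sigma$ moves by translations $\Sigma+t\vec a$ under the mean curvature flow; by definition of the flow the normal velocity equals the mean curvature, i.e.\ $\langle \vec a,N\rangle = 2H$ at every point of $\Sigma$ (with the sign convention for $H$ and $N$ fixed as in the introduction). Conversely, if $2H=\langle N,\vec a\rangle$ on $\Sigma$, one checks that the family $F(p,t)=p+t\vec a$ solves $\partial_t F=\vec H$ up to tangential diffeomorphisms, hence is a solution of mean curvature flow; this is the standard fact that a self-translating profile generates an eternal translating solution. Finally, when $\Sigma=\Sigma_u$ is a graph, the computation already recorded in Section~1 shows that $2H=\mathrm{div}\bigl(Du/\sqrt{1+|Du|^2}\bigr)$ and $\langle N,\vec a\rangle = 1/\sqrt{1+|Du|^2}$, so $2H=\langle N,\vec a\rangle$ is literally \eqref{eqs}; thus $(1)$ and $(2)$ say the same thing locally.

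Next I would prove $(2)\Leftrightarrow(3)$ using the first-variation formulas displayed just above the statement, specialized to $\phi(p)=\langle p,\vec a\rangle$, so that $\nabla\phi=\vec a$ and, by \eqref{hf}, $H_\phi=H-\tfrac12\langle N,\vec a\rangle$. Since $A'_\phi(0)=-2\int_\Sigma H_\phi\,\langle N,\xi\rangle\,dA_\phi$ for every compactly supported variation fixing $\partial\Sigma$, the surface $\Sigma$ is a critical point of $A_\phi$ if and only if $\int_\Sigma H_\phi\,\langle N,\xi\rangle\,dA_\phi=0$ for all admissible $\xi$; choosing $\xi$ with normal component ranging over all compactly supported functions and applying the fundamental lemma of the calculus of variations gives $H_\phi\equiv 0$, i.e.\ $2H=\langle N,\vec a\rangle$. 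This is precisely the equation characterizing $(2)$ (and $(1)$), so the equivalence follows. Assembling $(1)\Leftrightarrow(2)\Leftrightarrow(3)$ completes the proof.

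The only point requiring mild care — the ``main obstacle,'' such as it is — is bookkeeping of sign and normalization conventions: the mean curvature must be the average (not the sum) of principal curvatures, the unit normal $N$ must be the upward one $\tfrac{1}{\sqrt{1+|Du|^2}}(-Du,1)$, and the mean curvature vector $\vec H$ must point so that $\partial_tF=\vec H$ reproduces \eqref{eqs} with the correct sign on the right-hand side. Once these are pinned down consistently with Section~1, each implication is immediate, and no estimates or further hypotheses on $\Sigma$ (beyond $C^2$ regularity) are needed.
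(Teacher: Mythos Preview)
Your proposal is correct and mirrors exactly what the paper does: the proposition is stated without a formal proof, as a summary of the preceding discussion, which already records that $2H=\langle N,\vec a\rangle$ is (i) the nonparametric equation \eqref{eqs}, (ii) the defining condition for $\Sigma+t\vec a$ to move by mean curvature, and (iii) the Euler--Lagrange equation $H_\phi=0$ obtained from the first-variation formula $A'_\phi(0)=-2\int_\Sigma H_\phi\langle N,\xi\rangle\,dA_\phi$. Your write-up simply makes this explicit, with the same ingredients and no additional ideas.
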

  
In view of both approaches, we point out that    similar results   of Theorem \ref{t1} and Corollary \ref{c1} have been recently treated in   the literature. We  indicate some of them.

\begin{enumerate}
\item Corollary \ref{c1} appears in \cite[Th. 2]{ber} assuming $\Omega$ is contained in a disc of radius $1$ and satisfying an enclosing sphere condition. But in a Remark, Bergner asserts  that the assumption to be contained in a ball can drop if there exist $C^0$ estimates, such as it occurs for  (\ref{eqs}): see Proposition \ref{pr-31} below.
\item Corollary \ref{c1}    appears in \cite[Rem. 3]{ma}. Initially, it is assumed that  $|\Omega|<4\pi$ in a general result, but for (\ref{eqs}) this hypothesis drops. Using the same proof than in   \cite{ma}, the existence holds for $n\leq 2$ in equation (\ref{eqs2}) under the assumption that $|\Omega|<4\pi$.
\item Theorem \ref{t1} appears in   \cite[Lem. 2.2]{jj} for  $0<n<3$ assuming $\kappa>0$ and $|\Omega|<4\pi$.
\item  Corollary \ref{c1} appears in \cite[Th. 1.1]{sx} assuming that     $\mbox{diam}(\Omega)<2$.
\end{enumerate}

We finish this section giving two generalizations of the translating soliton equation. First,   consider the flow of surfaces by powers of mean curvature according to \cite{sh1,sh2,sw}. If $\alpha>0$ is a constant, then the surface $z=u(x,y)$ evolves by translations of the $H^\alpha$-power of mean curvature flow if
$$\mbox{div}\left(\frac{Du}{\sqrt{1+|Du|^2}}\right)=\left(\frac{1}{\sqrt{1+|Du|^2}}\right)^\alpha.$$
 Notice that  this equation coincides with (\ref{eqs2}) of Bernstein and Serrin with the relation $n=3-\alpha$. 

The second generalization is by considering   critical points of the  area $A_\phi$ for a  fixed weighted volume. As a consequence of the Lagrange multipliers, $\Sigma$ satisfies that  $H_\phi$ is a constant function and thus, in nonparametric form, we have
\begin{equation}\label{eq21}
\mbox{div} \frac{Du}{\sqrt{1+|Du|^2}} = \frac{1}{\sqrt{1+|Du|^2}}+\mu,
\end{equation}
where $\mu$ is a constant. This equation   has received a recent interest: \cite{bgm,elo,lo2}. Even more general, we may study the mean curvature flow with a forcing term, so the constant $\mu$ in (\ref{eq21}) is replaced by a function $f=f(u,Du)$ (\cite{jl0,ss}). For example, the mean curvature type equation 
$$\mbox{div} \frac{Du}{\sqrt{1+|Du|^2}} = H_1(x,u,Du)+H_2(x,u,Du)\frac{1}{\sqrt{1+|Du|^2}}$$
has been studied in \cite{ber,jl,ma}. 

{\bf Convention.}  After a change of coordinates, we will assume that $\vec{a}=(0,0,1)$.

 This paper is organized as follows. In Section \ref{sec2} we recall the translating solitons that are invariant by a uniparametric group of   translations and of rotations.   Section \ref{sec3} is devoted to the tangency principle and some consequences derived by its applications to control the shape of a compact translating soliton. Sections \ref{sec4} and \ref{sec5} solve the Dirichlet problem on bounded convex domains for the translating soliton equation (\ref{eqs}) and the constant weighted mean curvature equation (\ref{eq21}), respectively. Here the boundary gradient estimates are obtained by means of the classical maximum principle to suitable  choices
of barrier functions.   Finally, in Section \ref{sec6}  we  study the Dirichlet problem for (\ref{eqs}) in unbounded domains. We will  consider two cases, namely,   the domain is a strip and the boundary data are two copies of a convex function or     the domain is an unbounded convex domain contained in a strip and the boundary data are constant.

%%%%%%%%%%%%%%%%%%%%%%%
\section{Examples of translating solitons}\label{sec2}
%%%%%%%%%%%%%%%%%%%%%%%%%

Let  $\{e_1,e_2,e_3\}=\{(1,0,0),(0,1,0),(0,0,1)\}$ denote the canonical basis of $\r^3$. The Euclidean plane $\r^2$ will identified with plane of equation $z=0$.  We also use the terminology horizontal and vertical to indicate an orthogonal direction  to $\vec{a}$ or a parallel direction to $\vec{a}$, respectively.

Notice that any translation of  $\r^3$  preserves  solutions of the  translating soliton equation.  The same  occurs for     a rotation about an axis parallel to $e_3$.  Also,   equation (\ref{eqs})  is preserved by reversing the orientation on the surface.

In this section, we   are interested by examples of translating solitons that are invariant by a uniparametric group of motions, more precisely,   surfaces invariant along one direction and surfaces of revolution. In both cases, the equation (\ref{eqs}) converts into an ODE and one may apply the standard theory.  

\subsection{Cylindrical surfaces}

A  cylindrical surface $\Sigma$ is a surface   invariant along a direction $\vec{v}\in\r^3$, or in other words,   $\Sigma$ is a  ruled surface where all the rulings are parallel to $\vec{v}$.  We ask for those translating solitons of cylindrical type. Notice that there is not an {\it a priori} relation between the direction $\vec{v}$ and the density vector $\vec{a}$.

A parametrization of $\Sigma$ is $X(t,s)=\gamma(s)+t\vec{v}$, $t\in\r$ and $\gamma:I\subset\r\rightarrow\r^3$ is a planar curve orthogonal to $\vec{v}$.  We parametrize $\gamma=\gamma(s)$ by the arc length  $s$  such that   $\gamma'(s)\times {\bf n}(s)=\vec{v}$, being ${\bf n}$   the unit principal normal vector of $\gamma$.  The Gauss map of $\Sigma$ is $N(X(s,t))={\bf n}(s)$ and $2H=\kappa(s)$, being $\kappa$   the inward curvature of $\gamma$ as a planar curve. Thus $\Sigma$ is a translating soliton if   $\kappa(s)=\langle{\bf n}(s),\vec{a}\rangle$, hence we conclude that there is not a relation between the vectors $\vec{v}$ and $\vec{a}$. For example, if $\vec{v}$ is parallel to $\vec{a}$, then $\langle {\bf n}(s),\vec{a}\rangle=0$ for every $s\in I$, so $\gamma$ is a straight line and $\Sigma$ is a plane parallel to $\vec{a}$.

In a first step, we investigate the case that  $\vec{v}$ is orthogonal to $\vec{a}$ which, after a rotation about $\vec{a}$, we suppose $\vec{v}=e_1$. Let us observe that a vertical plane parallel to $e_1$, that is, a plane parallel to the $yz$-plane,  is a translating soliton of  cylindrical type. If we write $\gamma$ as   $z=w(y)$, then (\ref{eqs}) converts to
$$
w''=1+w'^2.
$$
By simple quadratures, the solution of this equation is 
\begin{equation}\label{gr}
w(y)=-\log(\cos(y+b))+a,\quad a,b\in\r,
\end{equation}
and this solution  is called the {\it grim reaper}. Although this holds for graphs $z=w(y)$, it is true in general:   if there is a vertical tangent vector at some point of $\gamma$, then $\gamma$ is a vertical line by uniqueness of ODE. This can be also obtained as follows. We parametrize $\gamma$ by the arc length. Then $\gamma(s)=(0,y(s),z(s))$, with $y'(s)=\cos\psi(s)$, $z'(s)=\sin\psi(s)$ for some function $\psi$. Then  $X(t,s)=(t,y(s),z(s))$  and (\ref{eqs}) becomes $2\psi'(s)=\cos\psi(s)$. If $\gamma$ is not a graph on the $y$-axis,  there is $s=s_0$ such that $\cos\theta(s_0)=0$. By uniqueness, the solution  is $\theta(s)=\pm\pi/2$,  $\gamma(s)=(0,a,\pm s+b)$, $a,b\in\r$, $\gamma$ is a vertical line and $\Sigma$ is the vertical plane of equation $y=a$.

Once obtained the translating solitons of cylindrical type when the vector $\vec{v}$ is orthogonal to $\vec{a}$, the rest of cylindrical surfaces are obtained by rotating the about surfaces about   a horizontal axis. The resulting surfaces are all  translating solitons of cylindrical type (after translations and rotations about a vertical axis). We present these surfaces, which will be called {\it grim reapers} again (Figure \ref{fig1}).

\begin{definition}\label{d-22}
 The uniparametric family of  grim reapers $w_\theta=w_\theta(x,y)$ are defined as 
\begin{equation}\label{ws}
w_\theta(x,y)=-\frac{1}{(\cos\theta)^2}\log(\cos(\cos\theta  y))+(\tan\theta)x+a,
\end{equation}
where    $\theta\in (-\pi/2,\pi/2)$, $a\in\r$. 
\end{definition}

Here we recall that planes parallel to the $xz$-plane are cylindrical translating solitons, which would correspond with the critical values $\theta=\pm\pi/2$.

\begin{proposition} All translating solitons of cylindrical type are planes parallel to the $xz$-plane or the grim reapers $w_\theta$.
\end{proposition}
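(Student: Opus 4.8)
The plan is to reduce the problem, as in the two cases already treated above, to a planar curvature equation, but now allowing the ruling direction $\vec v$ to be oblique to $\vec a$. The cases in which $\vec v$ is orthogonal or parallel to $\vec a$ having been settled, suppose $\vec v$ is a unit vector which is neither, and let $\gamma$ be the cross section of $\Sigma$ by the plane $P=\vec v^{\perp}$ through a point of $\Sigma$, parametrized by arc length, with unit principal normal ${\bf n}$ in $P$ oriented so that $\gamma'\times{\bf n}=\vec v$. As recalled above, $2H=\kappa_\gamma$ and $N\circ X={\bf n}$, so $\Sigma$ is a translating soliton if and only if $\kappa_\gamma(s)=\langle{\bf n}(s),\vec a\rangle$. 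The new ingredient is that, since ${\bf n}(s)\in P$ and hence ${\bf n}(s)\perp\vec v$, one has $\langle{\bf n},\vec a\rangle=\langle{\bf n},\vec w\rangle$, where $\vec w:=\vec a-\langle\vec a,\vec v\rangle\vec v$ is the orthogonal projection of $\vec a$ onto $P$. Writing $c:=|\vec w|=\sqrt{1-\langle\vec a,\vec v\rangle^{2}}\in(0,1)$, the soliton condition becomes the planar equation $\kappa_\gamma=\langle{\bf n},\vec w\rangle$ in $P$, which is of the same type as \eqref{eqs} with ``vertical'' direction $\vec w/c$ and a factor $c$ on the right-hand side.

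Fix orthonormal coordinates $(p,q)$ on $P$ with the $q$-axis along $\vec w$. If $\gamma$ is not a graph $q=W(p)$ over the $p$-axis, then, by the same uniqueness argument as in the case $\vec v=e_1$ above (with $\vec a$ replaced by $\vec w$), $\gamma$ must be a straight line in the direction of $\vec w$; then $\Sigma$ is the plane through a point of $\Sigma$ spanned by $\vec v$ and $\vec w$, and since $\vec a=\vec w+\langle\vec a,\vec v\rangle\vec v$ lies in it, $\Sigma$ is a vertical plane, hence one parallel to the $xz$-plane after a rotation about a vertical axis. If instead $\gamma$ is such a graph $q=W(p)$, the cross section has inward curvature $W''/(1+W'^{2})^{3/2}$ and $\langle{\bf n},\vec w\rangle=c/\sqrt{1+W'^{2}}$, so the soliton condition reads $W''=c\,(1+W'^{2})$, and two quadratures give
\[
W(p)=-\frac1c\log\cos(cp+b_1)+b_2,\qquad b_1,b_2\in\r .
\]

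It remains to identify these graph cylinders with the functions $w_\theta$. After a rotation about a vertical axis we may take $\vec v=(\cos\theta,0,\sin\theta)$ with $\theta\in(-\pi/2,\pi/2)\setminus\{0\}$; then $c=|\vec w|=\cos\theta$ and $\vec w/c=\eta:=(-\sin\theta,0,\cos\theta)$, so $P=\mathrm{span}\{e_2,\eta\}$ and the $p$-axis is the $e_2$-axis. Parametrizing $\Sigma$ by $X(p,t)=p\,e_2+W(p)\,\eta+t\,\vec v$, writing this point as $(x,y,z)$ and eliminating $p$ and $t$, one finds $p=y$ and
\[
z=\frac{1}{\cos\theta}\,W(y)+(\tan\theta)\,x=-\frac{1}{\cos^{2}\theta}\log\cos(\cos\theta\,y+b_1)+(\tan\theta)\,x+\frac{b_2}{\cos\theta},
\]
which, after a translation in $y$ and a relabelling of the additive constant, is precisely the function $w_\theta$ of Definition \ref{d-22}. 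Conversely, every $w_\theta$ and every plane parallel to the $xz$-plane is a cylindrical translating soliton, so, together with the cases treated above, the classification is complete. The only delicate point is the reduction to the planar equation via the projection $\vec w$ of $\vec a$ onto $P$; after that, the straight-line alternative and the grim-reaper profile come out exactly as in the case $\vec v=e_1$ already done, and the concluding step is only a change of coordinates.
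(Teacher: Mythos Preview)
Your proof is correct and follows essentially the same route as the paper: after a rotation putting $\vec v=\cos\theta\,e_1+\sin\theta\,e_3$, both reduce the soliton condition on the cross-section in $\vec v^{\perp}$ to the ODE $g''=\cos\theta\,(1+g'^2)$, integrate it, and then rewrite the parametrization $X(p,t)=p\,e_2+g(p)\,\eta+t\,\vec v$ as the graph $z=w_\theta(x,y)$. Your framing via the projection $\vec w$ of $\vec a$ onto $\vec v^{\perp}$ is a clean way to see why the constant $\cos\theta$ appears, and you also treat the non-graph (straight-line) alternative in the oblique case explicitly, which the paper's proof tacitly assumes away; otherwise the two arguments coincide.
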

\begin{proof} 
If $\vec{v}=\vec{a}$, we know that the surface is (\ref{gr}), which coincides, up to a reparametrization, with (\ref{ws}) for the choice $\theta=0$. 

 Suppose $\vec{v}$ be a vector which is not orthogonal to $\vec{a}$. After a rotation with respect to the $z$-axis, we assume that $\vec{v}=\cos\theta e_1+\sin\theta e_3$, $|\theta|<\pi/2$. Let $\vec{e}=-\sin\theta e_1+\cos\theta e_3$. We write the generating curve as a graph $g=g(s)$ on the $y$-axis. Then  parametrization of the surface is $X(t,s)=s e_2+g(s)\vec{e}+t\vec{v}$. A  computation shows that (\ref{eqs}) writes as $g''=\cos\theta(1+g'^2)$ and its integration gives $g(s)=-\log(\cos(\cos\theta s+b))/\cos\theta+a$, $a,b\in\r$. Then
$$X(t,s)=(-\sin\theta g(s)+t\cos\theta,s,t\sin\theta+\cos\theta g(s)).$$
Writing $X(t,s)=(x,y,u(x,y))$,   we deduce easily that $u$ coincides with the function $w_\theta$ in (\ref{ws}). 
\end{proof}

The maximal domain of  $w_\theta$ is     the strip 
$$\Omega^{\theta}=\left\{(x,y)\in\r^2: -\frac{\pi}{2\cos\theta}<y<\frac{\pi}{2\cos\theta}\right\}.$$
 In particular, if $0\leq \theta_1<\theta_2$, it follows that  $\Omega^{\theta_1}\subset\Omega^{\theta_2}$ and thus the domain $\Omega^0$, namely,  
 \begin{equation}\label{eq-do}
 \Omega^0=\{(x,y)\in\r^2:-\frac{\pi}{2}<y<\frac{\pi}{2}\},
 \end{equation}
 is contained in all  $\Omega^{\theta}$ for any $\theta\in (-\pi/2,\pi/2)$.

  \begin{figure}[h]
\begin{center}
\includegraphics[width=.4\textwidth]{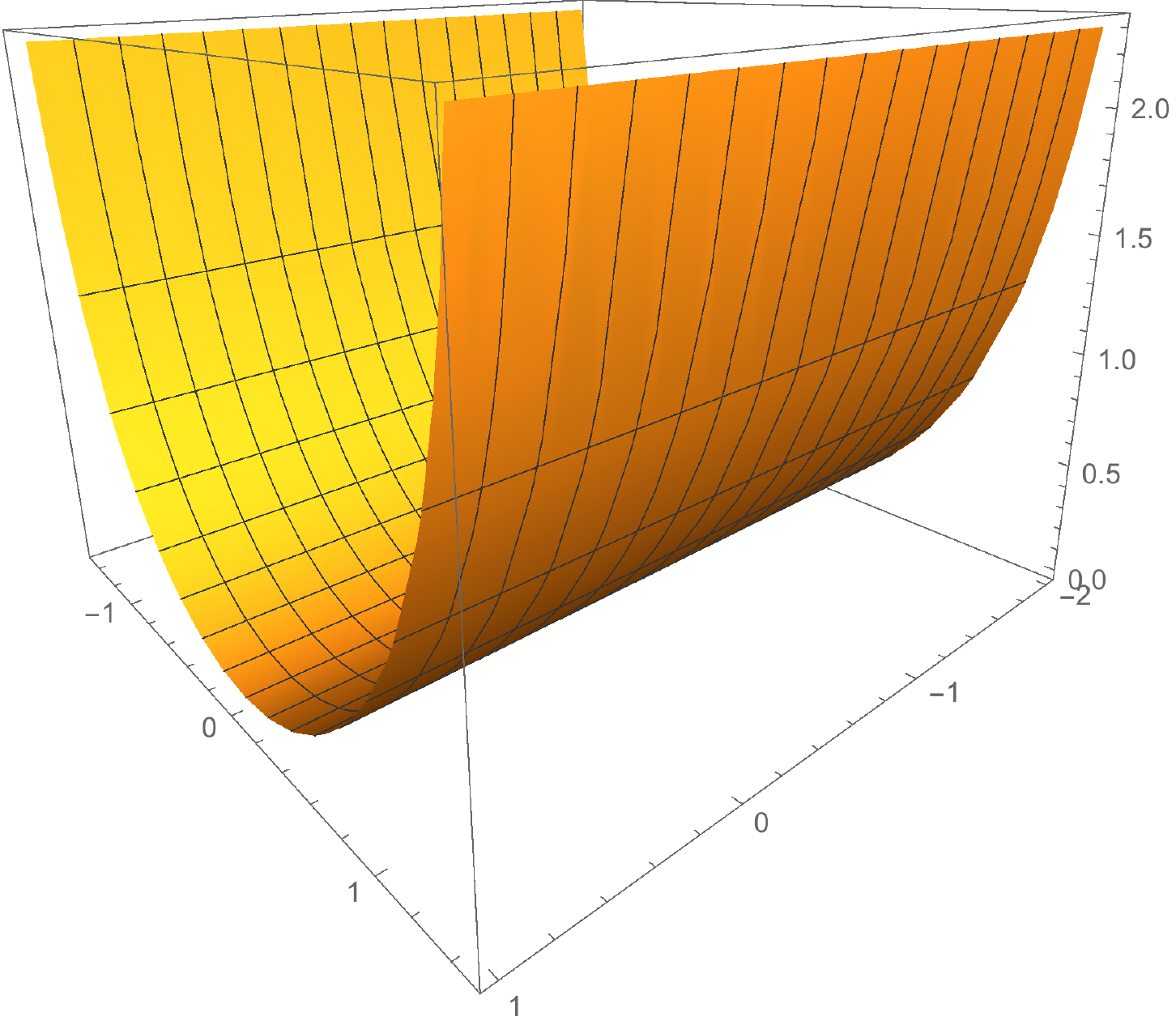}\quad \includegraphics[width=.35\textwidth]{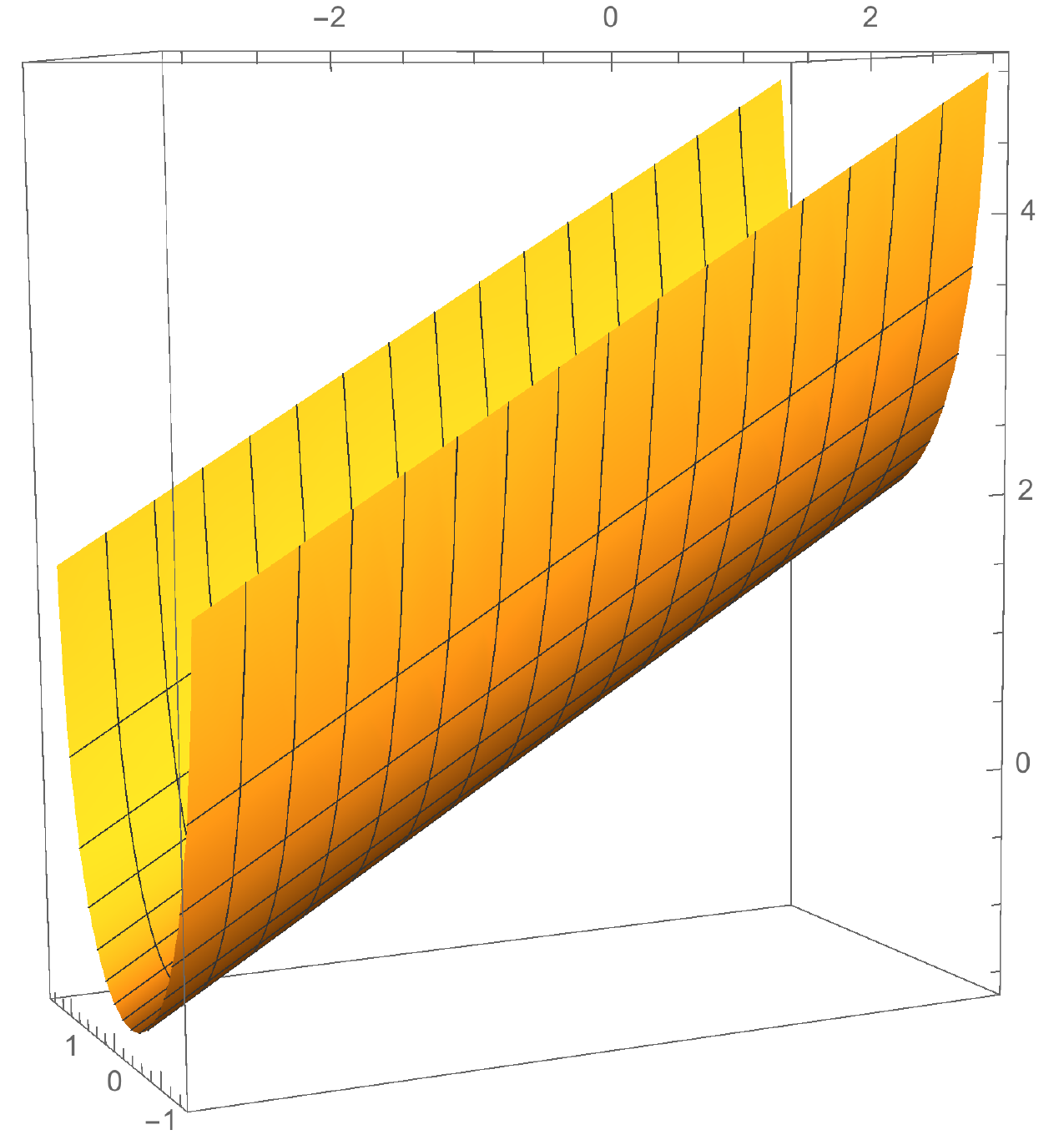}
\end{center}
\caption{The grim reapers $w_\theta$. Left: $\theta=0$; Right: $\theta=\pi/6$}\label{fig1}
\end{figure}

 \subsection{Rotational surfaces}
The second family of translating solitons of our interest are those ones of rotational type. If $\Sigma$ is a surface of revolution about a rotation axis   $\vec{v}$,   we ask about the relation between the vector $\vec{v}$ and the density vector $\vec{a}$. 

\begin{proposition} Let $\Sigma$ be a surface of revolution with respect to the vector $\vec{v}$. If $\Sigma$ is a translating soliton, then $\vec{v}$ is parallel to $\vec{a}$ or $\Sigma$ is a plane parallel to $\vec{a}$  where  $\vec{v}$ is  orthogonal to $\vec{a}$.
\end{proposition}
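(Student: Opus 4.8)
The plan is to play the rotational symmetry of $\Sigma$ off against the \emph{absence} of rotational symmetry of equation (\ref{eqs}). The first step is to record how the translating soliton equation transforms under rigid motions. After a translation (which preserves the equation with the same $\vec{a}$) we may assume the rotation axis $\vec{v}$ passes through the origin, and we let $\{R_\phi\}_{\phi\in\r}\subset SO(3)$ be the one-parameter group of rotations of $\r^3$ about $\vec{v}$. If a surface $\Sigma$ satisfies $2H=\langle N,\vec{a}\rangle$ and $R\in SO(3)$, then $R(\Sigma)$ satisfies $2H=\langle N,R(\vec{a})\rangle$: the mean curvature is invariant and the Gauss map is equivariant under rigid motions, $N_{R(\Sigma)}\circ R=R\circ N_\Sigma$, so $2H_{R(\Sigma)}\circ R=2H_\Sigma=\langle N_\Sigma,\vec{a}\rangle=\langle R\circ N_\Sigma,R(\vec{a})\rangle=\langle N_{R(\Sigma)}\circ R,R(\vec{a})\rangle$, and, as already observed, the equation does not see the choice of orientation.

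Next I would use that $\Sigma$, being a surface of revolution about $\vec{v}$, satisfies $R_\phi(\Sigma)=\Sigma$ for every $\phi$. By the previous step $\Sigma$ is then simultaneously a translating soliton for every density vector $R_\phi(\vec{a})$, and subtracting the equation for $R_\phi(\vec{a})$ from the equation for $\vec{a}$ gives
$$\langle N(p),\,\vec{a}-R_\phi(\vec{a})\rangle=0\qquad\text{for every }p\in\Sigma\text{ and every }\phi\in\r.$$
Now suppose $\vec{v}$ is not parallel to $\vec{a}$ and write $\vec{a}=\vec{a}^{\,\top}+\vec{a}^{\,\perp}$, with $\vec{a}^{\,\top}$ parallel to $\vec{v}$ and $\vec{a}^{\,\perp}$ the nonzero component orthogonal to $\vec{v}$. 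Since $R_\phi$ fixes $\vec{a}^{\,\top}$, we have $\vec{a}-R_\phi(\vec{a})=\vec{a}^{\,\perp}-R_\phi(\vec{a}^{\,\perp})$, and as $\phi$ varies these vectors span the whole $2$-plane $\vec{v}^{\perp}$ (already $\phi=\pi$ yields $2\vec{a}^{\,\perp}$ and $\phi=\pi/2$ yields a vector of $\vec{v}^{\perp}$ independent of it). Hence $\langle N(p),w\rangle=0$ for all $w\in\vec{v}^{\perp}$ and all $p\in\Sigma$, i.e. $N$ is everywhere parallel to $\vec{v}$; assuming $\Sigma$ connected, $N$ is a constant unit vector, so $\Sigma$ lies in a plane $P$ with normal $\vec{v}$. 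But then $H\equiv0$ on $\Sigma$, and the soliton equation forces $\langle N,\vec{a}\rangle=0$; since $N\parallel\vec{v}$ this says $\vec{v}\perp\vec{a}$, and therefore $P$ is parallel to $\vec{a}$ — exactly the remaining alternative.

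The only points that require care are the equivariance statement of the first paragraph (routine, once one tracks how $H$ and $N$ transform and recalls that the equation is orientation-blind) and the elementary claim that $\{\vec{a}^{\,\perp}-R_\phi(\vec{a}^{\,\perp})\}_\phi$ spans $\vec{v}^{\perp}$; I expect no genuine obstacle beyond this bookkeeping, and the argument also recovers cleanly the fact that a rotational translating soliton whose axis is neither parallel nor orthogonal to $\vec{a}$ cannot exist.
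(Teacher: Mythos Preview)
Your argument is correct. The paper's proof reaches the same conclusion by a more local route: it fixes a single parallel (a circle of latitude) of $\Sigma$ and observes that $H$ is constant along it while the Gauss map $N$ rotates about $\vec{v}$, keeping only its angle with $\vec{v}$ fixed; since $2H=\langle N,\vec{a}\rangle$ must then be constant along the parallel, either $\vec{a}\parallel\vec{v}$ or the component of $N$ orthogonal to $\vec{v}$ vanishes identically, and one finishes as you do. Your version packages the same tension globally via equivariance: rotational invariance of $\Sigma$ makes it a soliton for every $R_\phi(\vec{a})$, and subtracting forces $N\perp\vec{v}^{\perp}$. The two are close cousins---indeed, your identity $\langle N(p),\vec{a}\rangle=\langle N(p),R_\phi(\vec{a})\rangle$ at a fixed $p$ is exactly the paper's constancy of $\langle N,\vec{a}\rangle$ along the parallel through $p$, rewritten via $N(R_\phi p)=R_\phi N(p)$. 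The paper's phrasing is a line shorter; yours is more systematic and transfers verbatim to any equation of the form ``rotation-invariant quantity $=\langle N,\text{fixed vector}\rangle$''.
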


\begin{proof}
The value of the mean curvature $H$ is constant along  a parallel of $\Sigma$. On the other hand,  the Gauss map $N$ makes a constant angle with $\vec{v}$ along a parallel of the surface. Since $2H=\langle N,\vec{a}\rangle$,  the function  $\langle N,\vec{a}\rangle$ is constant along every parallel of $\Sigma$. 
Hence, we have only two possibilities, namely, $\vec{v}$ is parallel to $\vec{a}$ or $\langle\vec{v},\vec{a}\rangle=0$ with   $\langle N,\vec{a}\rangle=0$ on $\Sigma$. In the latter case,   $\Sigma$ is a plane parallel to $\vec{a}$.
\end{proof}

After  a translation of $\r^3$, we will assume that the rotation axis is the $z$-axis. If  we parametrize $\Sigma$ as $z=u(r)$, $r^2=x^2+y^2$, equation (\ref{eqs}) becomes
\begin{equation}\label{ura}
u''+\frac{u(1+u'^2)}{r}=1+u'^2.
\end{equation}
Therefore, by standard theory of ODE, there are solutions of (\ref{ura}) of initial conditions $u(r_0)=u_0$, $u'(r_0)=u_0'$, with $r_0>0$. The classification of the translating solitons of rotational type was done in \cite{aw,css}:   see Figure  \ref{fig2}.   

\begin{definition} There are two types of rotational translating solitons depending if the surface meets or does  not meet the rotation axis:
\begin{enumerate}
\item Bowl solitons. They are strictly convex entire graphs with a global minimum in the $z$-axis and intersect orthogonally the rotation axis. The surfaces are asymptotic to a paraboloid. 
\item Surfaces of winglike shape. These surfaces do not intersect the rotation axis.
\end{enumerate}
\end{definition}

  \begin{figure}[hbtp]
\begin{center}
\includegraphics[width=.4\textwidth]{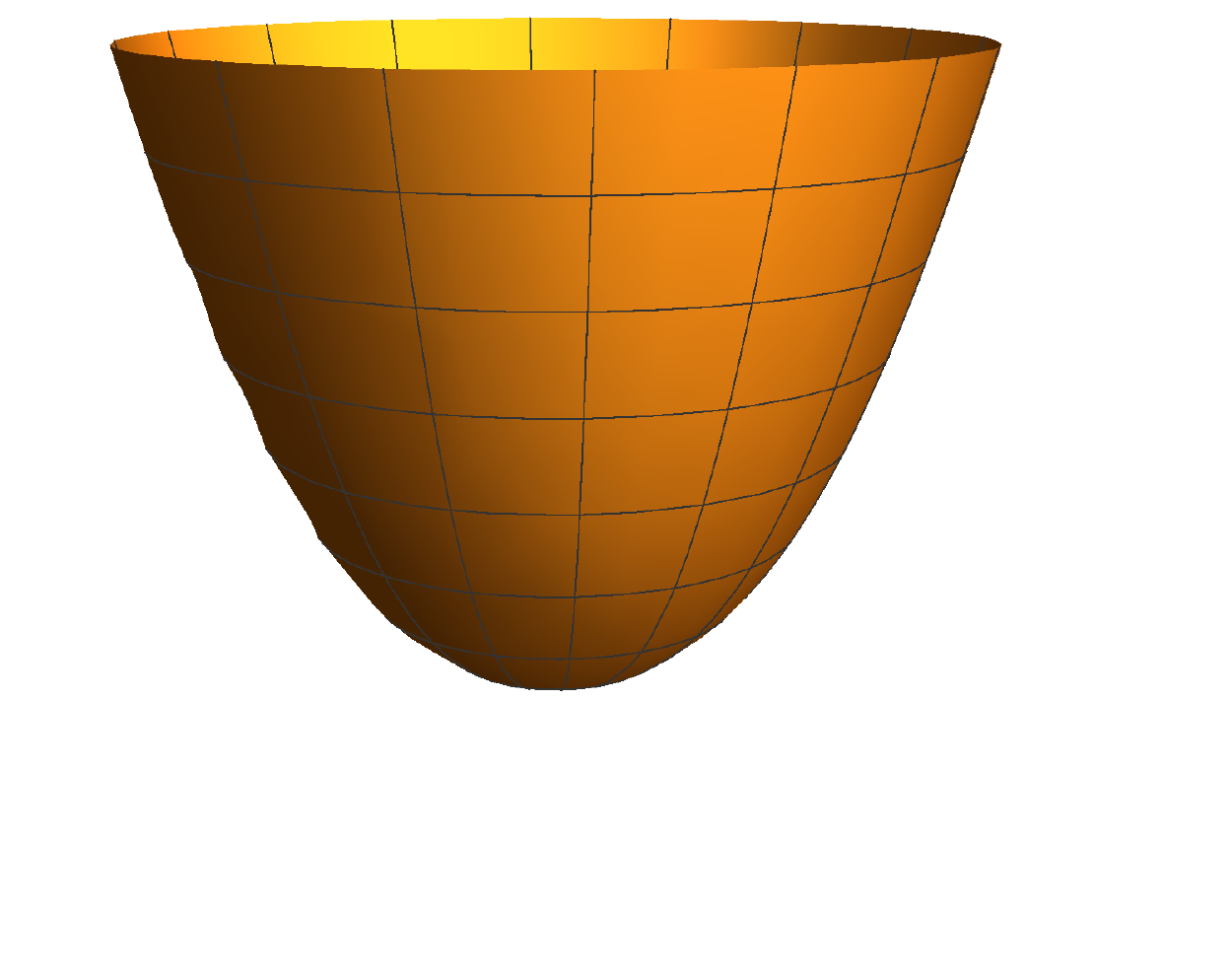}\quad \includegraphics[width=.4\textwidth]{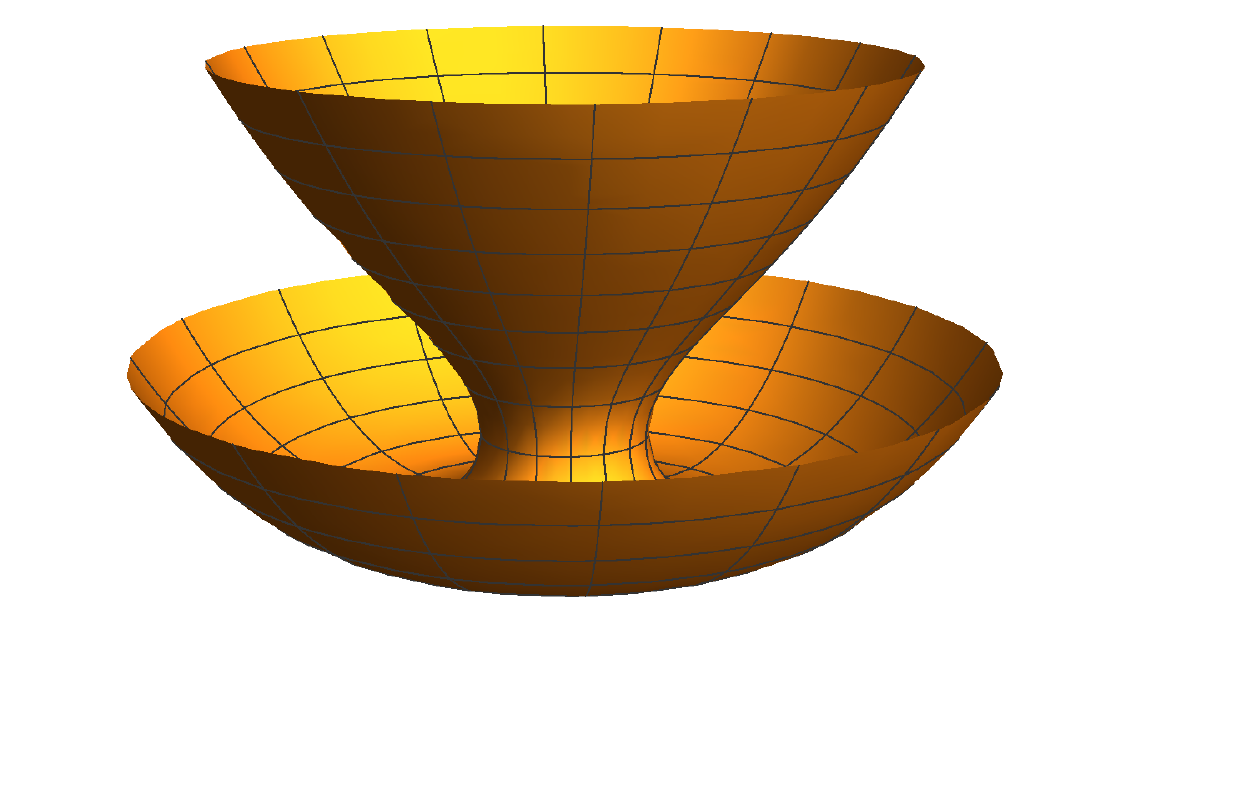}
\end{center}
\caption{Rotational translating solitons.   Left: the bowl soliton; Right: surface with winglike-shape}\label{fig2}
\end{figure}

The bowl soliton   corresponds with the solution of (\ref{ura}) with initial condition  $u(0)=u'(0)=0$ where the existence   is  not a direct   consequence of the standard theory because (\ref{ura}) presents a singularity at $r=0$.   On the other hand, the winglike-shape solutions corresponds with solutions of (\ref{ura}) with $r_0>0$ and $u'(r_0)=0$,  whose existence is immediate.

The existence of the bowl soliton was done in  \cite[Cor. 3.3]{aw}. The authors solve   (\ref{eqs}) in  a round disk with Neumann boundary condition $\partial u/\partial n=\cos\alpha/\sqrt{1+|Du|^2}$ and, after an argument of continuity varying the parameter  $\alpha$, they obtain the desired rotational solution. In this paper, we give two alternative proofs of the existence of the bowl solitons. One will appear in Remark \ref{rem1} using Corollary \ref{c1} and an argument by means of the Alexandrov reflection method. We  now present the other proof, which follows standard techniques of radial solutions for some   equations of mean curvature type  (\cite{be,cco}).    We write (\ref{ura}) as 
\begin{equation}\label{rot1}
\frac{u''(r)}{(1+u'(r)^2)^{3/2}}+\frac{u'(r)}{r\sqrt{1+u'(r)^2}}= \frac{1}{\sqrt{1+u'(r)^2}}.
\end{equation}
 Multiplying   (\ref{rot1}) by $r$, and integration by parts, we wish to establish the existence of a classical solution of
\begin{equation}\label{rot-r}
\left\{\begin{array}{ll}
 \left({\displaystyle \frac{r u'(r)}{\sqrt{1+u'(r)^2}}}\right)'= {\displaystyle \frac{r}{\sqrt{1+u'(r)^2}}},&\mbox{ in $(0,\delta)$}\\
u(0)=0, \quad u'(0)=0.&
\end{array}\right.
\end{equation}
Let us observe that    equation     (\ref{rot-r}) is singular at $r=0$.

\begin{proposition}\label{pr-exi}
The initial value problem (\ref{rot-r})  has a solution $u\in C^2([0,R])$ for some $R>0$ which depends continuously on the initial data.
\end{proposition}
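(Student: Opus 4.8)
The plan is to solve the singular initial value problem (\ref{rot-r}) by a fixed-point argument on a small interval $[0,R]$. First I would rewrite the problem in integrated form: if $u$ solves (\ref{rot-r}), then setting $\phi(r)=ru'(r)/\sqrt{1+u'(r)^2}$ we have $\phi(0)=0$ and $\phi'(r)=r/\sqrt{1+u'(r)^2}$, so $\phi(r)=\int_0^r s/\sqrt{1+u'(s)^2}\,ds$. Solving for $u'$ gives
\begin{equation*}
u'(r)=\frac{\phi(r)}{\sqrt{r^2-\phi(r)^2}},\qquad \phi(r)=\int_0^r\frac{s}{\sqrt{1+u'(s)^2}}\,ds,
\end{equation*}
and then $u(r)=\int_0^r u'(t)\,dt$. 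The key observation is that $|\phi(r)|\le r^2/2 < r$ for small $r$, so the square root in the denominator is controlled. I would therefore set up the integral operator $T$ acting on the function $v=u'$, namely $(Tv)(r)=\psi_v(r)/\sqrt{r^2-\psi_v(r)^2}$ where $\psi_v(r)=\int_0^r s/\sqrt{1+v(s)^2}\,ds$, and seek a fixed point in a closed ball of $C^0([0,R])$, say $\{v:\|v\|_\infty\le 1,\ v(0)=0\}$.

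The main steps are then: (i) show $T$ maps this ball into itself for $R$ small — here one estimates $|\psi_v(r)|\le r^2/2$, so $r^2-\psi_v(r)^2\ge r^2(1-r^2/4)\ge 3r^2/4$ for $r\le 1$, whence $|(Tv)(r)|\le (r^2/2)/(r\sqrt{3}/2)=r/\sqrt 3\le 1$ once $R\le 1$, and moreover $(Tv)(0)=0$; (ii) show $T$ is a contraction for $R$ small — this requires estimating $\|\psi_{v_1}-\psi_{v_2}\|_\infty\le C R^2\|v_1-v_2\|_\infty$ from the Lipschitz dependence of $s/\sqrt{1+v^2}$ on $v$, together with the fact that the map $\psi\mapsto \psi/\sqrt{r^2-\psi^2}$ is Lipschitz in $\psi$ uniformly on the relevant region (again using $r^2-\psi^2\ge 3r^2/4$ and that the extra factor of $r$ cancels); (iii) conclude by the Banach fixed point theorem that there is a unique $v\in C^0([0,R])$ with $Tv=v$. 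Then $u(r)=\int_0^r v$ satisfies $u(0)=u'(0)=0$.

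Next I would bootstrap regularity. Since $v=u'$ is continuous and $v(0)=0$, the function $\psi_v$ is $C^1$ with $\psi_v'(r)=r/\sqrt{1+v(r)^2}$, and then $v=\psi_v/\sqrt{r^2-\psi_v^2}$ is $C^1$ on $(0,R]$; differentiating the relation $\psi_v=r v/\sqrt{1+v^2}$ recovers exactly (\ref{rot1}) on $(0,R)$, so $u\in C^2((0,R])$ and solves the ODE classically there. For regularity at $r=0$: from $v(r)=O(r)$ one gets $\psi_v(r)=r^2/2+o(r^2)$, hence $v(r)=r/2+o(r)$, which shows $u''(0)$ exists and equals $1/2$; more careful expansion (or reading off from the ODE) gives $u\in C^2([0,R])$, and in fact $C^\infty$ up to $0$ by repeated differentiation, so the bowl soliton is smooth. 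Continuous dependence on the initial data is automatic: one may append a parameter by replacing the initial conditions with $u(0)=c$ (a vertical translate, which changes nothing) or, more to the point, the construction is stable under perturbation of the right-hand side, and the fixed point depends continuously on any parameter entering $T$ continuously.

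I expect the main obstacle to be handling the singularity at $r=0$ cleanly — specifically, verifying that the operator $T$ is genuinely a contraction near $r=0$ despite the $1/\sqrt{r^2-\psi_v^2}$ factor blowing up pointwise like $1/r$. The resolution is the cancellation already noted: $\psi_v$ vanishes to second order, so $(Tv)(r)$ behaves like $r/\sqrt3$ rather than like $1/r$, and the same second-order vanishing of the difference $\psi_{v_1}-\psi_{v_2}$ (because both integrands agree to leading order, being $s$ times something close to $1$) keeps the Lipschitz constant of $T$ of size $O(R^2)$. A secondary technical point is making the derivative estimate in step (ii) rigorous, i.e. bounding $|s/\sqrt{1+v_1^2}-s/\sqrt{1+v_2^2}|\le s|v_1-v_2|$ using $|v_i|\le 1$, which is elementary. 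Everything else is routine integration-by-parts and Taylor expansion.
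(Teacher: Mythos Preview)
Your proposal is correct and follows essentially the same route as the paper. The paper sets up the fixed-point problem for $u$ in $C^1([0,\delta])$ via the operator $(\mathsf{T}u)(r)=\int_0^r f^{-1}\bigl(\tfrac{1}{s}\int_0^s t\,g(u')\,dt\bigr)\,ds$ with $f(y)=y/\sqrt{1+y^2}$; differentiating this and writing $f^{-1}$ explicitly gives exactly your operator $(Tv)(r)=\psi_v(r)/\sqrt{r^2-\psi_v(r)^2}$ on $v=u'$, so the two arguments are the same contraction in different coordinates (and both finish with the same L'H\^{o}pital computation $u''(0)=1/2$). One small correction: your Lipschitz constant for $T$ comes out as $O(R)$, not $O(R^2)$, since the derivative of $\psi\mapsto\psi/\sqrt{r^2-\psi^2}$ is of order $1/r$ and $|\psi_{v_1}-\psi_{v_2}|\le (r^2/2)\|v_1-v_2\|_\infty$; this of course still yields a contraction for $R$ small.
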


\begin{proof}
Define the functions $g:\r\times\r\rightarrow\r$ and $f:\r\rightarrow\r$ by
$$g(x,y)=\frac{1}{\sqrt{1+y^2}},\ f(y)=\frac{y}{\sqrt{1+y^2}}.$$
It is clear that a function $u\in C^2([0,\delta])$, for some $\delta>0$, is a solution of (\ref{rot-r}) if and only if   $(rf(u'))'=r g(u,u')$ and $u(0)=0$, $u'(0)=0$.

Fix $\delta>0$ to be determined later, and define the operator ${\mathsf T}$ by
$$({\mathsf T}u)(r)=a+\int_0^rf^{-1}\left(\int_0^s\frac{t}{s} g(u') dt\right)ds.$$
Note that a fixed point of the operator ${\mathsf T}$ is a solution of the initial value problem (\ref{rot-r}). We claim now that ${\mathsf T}$ is a contraction in the space $C^1([0,\delta])$ endowed  with the usual norm $\|u\|=\|u\|_\infty+\|u'\|_\infty$. To see this, the functions $g$ and  $f^{-1}$ are  Lipschitz continuous of constant $L>0$ in
$[-\epsilon,\epsilon]\times[-\epsilon,\epsilon]$ and $[-\epsilon,\epsilon]$ respectively, provided $\epsilon<1$. Then for all $u,v\in\overline{B(0,\epsilon)}$ and for all $r\in [0,\delta]$,
$$|({\mathsf T}u)(r)-({\mathsf T}v)(r)|\leq\frac{L^2}{4} r^2\left( \|u-v\|_\infty+\|u'-v'\|_\infty\right)$$
$$|({\mathsf T}u)'(r)-({\mathsf T}v)'(r)|\leq\frac{L^2}{2} r\left( \|u-v\|_\infty+\|u'-v'\|_\infty\right)$$
By choosing $\delta>0$ small enough, we deduce that ${\mathsf T}$ is a contraction in the closed ball $\overline{B(0,\delta)}$ in $C^1([0,\delta])$. Thus the Schauder Point Fixed theorem proves the existence of one fixed point of $\mathsf{T}$, so  the existence of  a local solution of the initial value problem  (\ref{rot-r}). This solution lies in $C^1([0,\delta])\cap C^2((0,\delta])$. The $C^2$-regularity up to $0$ is verified directly by  using the L'H\^{o}pital rule because (\ref{rot1})  leads to
     $$u''(0)+\lim_{r\rightarrow 0}\frac{u'(r)}{r}=1,$$
that is,
$$\lim_{r\rightarrow 0} u''(r)=\frac12.$$
The continuous dependence of local solutions on the initial data  is a consequence of the continuous dependence of the fixed points of ${\mathsf T}$.
 \end{proof}

From the   classification of the rotational translating solitons, we observe    that there do not exist closed surfaces (compact without boundary).  Even more, we prove that there are not   closed translating solitons. Usually the proof that appears in the literature of this result uses the touching principle (see Proposition  \ref{p-tan} below). However, it is easier the following argument that we present, which only utilizes  the divergence theorem (\cite{lo2}).

\begin{proposition} \label{pr-closed}
There do not exist closed translating solitons.
\end{proposition}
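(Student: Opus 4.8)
The plan is to use the divergence theorem on the closed translating soliton $\Sigma$ itself. Suppose, for contradiction, that $\Sigma$ is a closed (compact, boundary-less) surface satisfying $2H = \langle N, \vec a\rangle$ everywhere, with $\vec a = (0,0,1)$. Since $\Sigma$ is compact, the height function $z|_\Sigma$ attains a maximum at some point $p_0$. Near the top point, $\Sigma$ lies below the horizontal tangent plane, so with the outward-pointing normal convention we have $N(p_0) = \vec a$ and the mean curvature there is nonpositive or nonnegative depending on orientation; in any case $\langle N(p_0),\vec a\rangle = 1 > 0$, which already shows $\Sigma$ cannot be minimal but does not immediately give the contradiction. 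So instead I would integrate.

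The key computation is to express $\langle N,\vec a\rangle$ as a divergence of a tangential vector field and apply Stokes/the divergence theorem on the closed manifold $\Sigma$, where boundary terms vanish. Concretely, let $\vec a^\top$ denote the tangential projection of the constant field $\vec a$ onto $\Sigma$, i.e.\ $\vec a^\top = \vec a - \langle N,\vec a\rangle N$. A standard identity for the divergence on $\Sigma$ of the tangential part of a constant ambient vector gives $\operatorname{div}_\Sigma(\vec a^\top) = -\,2H\,\langle N,\vec a\rangle$ (this is exactly the trace of the shape operator contracted against $\vec a^\top$, with the sign depending on the $H$ convention). Substituting the soliton equation $2H = \langle N,\vec a\rangle$ yields $\operatorname{div}_\Sigma(\vec a^\top) = -\langle N,\vec a\rangle^2$. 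Integrating over the closed surface, the left side is zero by the divergence theorem, so $\int_\Sigma \langle N,\vec a\rangle^2\, dA = 0$, forcing $\langle N,\vec a\rangle \equiv 0$ on $\Sigma$. But then $2H \equiv 0$, so $\Sigma$ is a closed minimal surface in $\r^3$, which is impossible (again by the divergence theorem applied to the position vector, or simply because a compact minimal surface would violate the maximum principle / the fact that $z|_\Sigma$ cannot have an interior maximum on a minimal graph). Either way we reach a contradiction, so no closed translating soliton exists.

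The only real point requiring care — and the step I expect to be the main (minor) obstacle — is pinning down the correct sign in the divergence identity $\operatorname{div}_\Sigma(\vec a^\top) = -2H\langle N,\vec a\rangle$, since the paper uses $H$ as the average (not sum) of principal curvatures and a specific orientation $N$; one must check the identity is consistent with that convention so that the integrand $\langle N,\vec a\rangle^2$ comes out with a definite sign rather than being indefinite. This is a routine local computation: choose a local orthonormal frame $\{e_1,e_2\}$ on $\Sigma$, write $\operatorname{div}_\Sigma(\vec a^\top) = \sum_i \langle \nabla_{e_i}\vec a^\top, e_i\rangle$, use $\nabla_{e_i}\vec a = 0$ (constant field in $\r^3$) and the Weingarten relation $\nabla_{e_i} N = -S(e_i)$ to collect terms. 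Once the sign is fixed, the integration argument closes the proof immediately, with no estimates or PDE machinery needed — exactly in the spirit of the ``only the divergence theorem'' remark preceding the statement.
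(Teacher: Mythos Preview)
Your proposal is correct and is essentially the paper's own argument, just phrased slightly differently. The paper computes $\Delta_\Sigma\langle p,\vec a\rangle = 2H\langle N,\vec a\rangle = \langle N,\vec a\rangle^2$ and integrates; since $\vec a^\top = \nabla_\Sigma\langle p,\vec a\rangle$, your divergence identity $\operatorname{div}_\Sigma(\vec a^\top)=\pm 2H\langle N,\vec a\rangle$ is exactly the same formula (in the paper's convention the sign is $+$, not $-$, but as you note this is immaterial since the integrand is $\langle N,\vec a\rangle^2$ either way). The only cosmetic difference is the endgame: from $\langle N,\vec a\rangle\equiv 0$ the paper invokes surjectivity of the Gauss map on a closed surface, while you pass through ``closed minimal surface in $\r^3$''; both are standard and immediate.
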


\begin{proof} The proof is by contradiction. Suppose that $\Sigma$ is a closed translating soliton. Since    the Laplacian $\Delta$ of the height function $\langle p,\vec{a}\rangle$ is $\Delta\langle p,\vec{a}\rangle=2H\langle N,\vec{a}\rangle$, and $2H=\langle N,\vec{a}\rangle$, then 
$$
\Delta\langle p,\vec{a}\rangle=\langle N,\vec{a}\rangle^2.
$$
 Integrating in $\Sigma$ and   using the divergence theorem, we deduce  
\begin{equation}\label{com}
0= \int_\Sigma\langle N,\vec{a}\rangle^2\ d\Sigma,
\end{equation}
because $\partial\Sigma=\emptyset$. Hence $\langle N,\vec{a}\rangle=0$ in $\Sigma$. This is a contradiction because on a closed surface, the Gauss map $N$ is   surjective   on the unit sphere $\mathbb{S}^2$. 
\end{proof}

 %%%%%%%%%%%%%%
 \section{Properties of the solutions of the translating soliton equation}\label{sec3}
 %%%%%%%%%%%%%%%%%%%%%%%

This section establishes some properties of the solutions $u$ of   the translating soliton equation, with a special interest in the control of $|u|$ and $|Du|$ when $\Omega$ is a bounded domain.   

It is easily seen  that the difference of two solutions  of equation (\ref{eqs}) satisfies the maximum principle.  As a consequence, we give a statement of  the comparison principle in our context. First, equation (\ref{eqs}) can be expressed as  $Q[u]=0$, where $Q$ is the operator     
\begin{equation}\label{eq4}
Q[u]=(1+|Du|^2)\Delta u-u_iu_ju_{i;j}-(1+|Du|^2),
\end{equation}
being $u_i=\partial u/\partial x_i$, $i=1,2$, and we assume the summation convention of repeated indices. The comparison principle asserts (\cite[Th. 10.1]{gt}):
 
 \begin{proposition}[Comparison principle] If $u,v\in C^2(\Omega)\cap C^0(\overline{\Omega})$ satisfy $Q[u]\geq Q[v]$ in $\Omega$ and $u\leq v$ on $\partial\Omega$, then $u\leq v$ in $\Omega$.  If we replace $Q[u]\geq Q[v]$ by $Q[u]> Q[v]$, then   $u<v$ in $\Omega$.
 \end{proposition}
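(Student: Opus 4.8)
The plan is to deduce the comparison principle for $Q$ from the standard maximum principle for quasilinear elliptic operators (as in \cite[Ch.~10]{gt}), once we have checked that $Q$ has the required structural form. First I would record that $Q[u] = a^{ij}(Du)\,u_{i;j} + b(Du)$ with $a^{ij}(p) = (1+|p|^2)\delta^{ij} - p_ip_j$ and $b(p) = -(1+|p|^2)$; crucially $Q$ has no explicit dependence on $u$ itself, only on $x$ (none, in fact) and on $Du$. The coefficient matrix $a^{ij}(p)$ is symmetric and, for every $p\in\r^2$, positive definite: its eigenvalues are $1+|p|^2$ (eigenvector $p^\perp$) and $1$ (eigenvector $p$), so $Q$ is elliptic, though not uniformly so as $|p|\to\infty$. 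This places $Q$ in the class covered by the comparison theorem for quasilinear operators, whose hypotheses are exactly ellipticity plus monotonicity in the $u$-variable (here vacuously satisfied, since $\partial b/\partial u = 0 \le 0$).

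Next I would carry out the actual comparison argument, which in this setting is the classical linearization trick. Set $w = u - v$; the goal is to show $w \le 0$ in $\Omega$ given $w \le 0$ on $\partial\Omega$ and $Q[u] \ge Q[v]$. Writing the difference $Q[u] - Q[v]$ and applying the mean value theorem along the segment joining $(Du, D^2u)$ to $(Dv, D^2v)$, one obtains that $w$ satisfies a linear inequality $\tilde a^{ij}(x)\, w_{i;j} + \tilde b^i(x)\, w_i \ge 0$ in $\Omega$, where $\tilde a^{ij}$ is a positive definite matrix (an average of values of $a^{ij}$, hence still elliptic on the relevant compact set) and $\tilde b^i$ are bounded coefficients on $\Omega' \Subset \Omega$; there is no zeroth-order term, so no sign condition is needed. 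The classical weak maximum principle for linear elliptic operators with no zeroth-order term (\cite[Thm.~3.1]{gt}) then yields $\sup_\Omega w = \sup_{\partial\Omega} w \le 0$, i.e.\ $u \le v$ in $\Omega$. For the strict statement, if $Q[u] > Q[v]$ then the linear inequality is strict, $\tilde a^{ij} w_{i;j} + \tilde b^i w_i > 0$, so $w$ cannot attain a nonnegative interior maximum (at such a point $D^2w \le 0$ and $Dw = 0$, forcing the left side $\le 0$); combined with $w \le 0$ on $\partial\Omega$ this gives $w < 0$ throughout $\Omega$.

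The one point requiring a little care — and the main obstacle — is the non-uniform ellipticity: the averaged coefficients $\tilde a^{ij}$ and the first-order coefficients $\tilde b^i$ are controlled only in terms of $\sup|Du|$ and $\sup|Dv|$, which need not be finite on all of $\Omega$ for $C^2(\Omega)$ functions. The standard remedy is to work on subdomains $\Omega' \Subset \Omega$ where $|Du|, |Dv|, |D^2u|, |D^2v|$ are bounded, apply the linear maximum principle there, and then exhaust $\Omega$; since $u, v \in C^0(\overline\Omega)$ the boundary values are respected in the limit. Alternatively, one simply invokes \cite[Thm.~10.1]{gt} verbatim, whose proof already incorporates this localization for exactly this class of operators. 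I would present the argument via the direct reference to \cite{gt}, remarking only that ellipticity of $Q$ and the absence of $u$-dependence in $Q$ are what is being used, and leave the linearization as the one-line explanation of why the hypothesis $Q[u]\ge Q[v]$, $u\le v$ on $\partial\Omega$ suffices.
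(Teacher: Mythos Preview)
Your proposal is correct and takes essentially the same approach as the paper: the paper gives no proof at all beyond the citation \cite[Th.~10.1]{gt}, and your argument is precisely the verification that the hypotheses of that theorem are met (ellipticity of $a^{ij}(p)=(1+|p|^2)\delta_{ij}-p_ip_j$ and vacuous monotonicity in $u$), together with a sketch of the linearization that underlies it. Your added discussion of non-uniform ellipticity is more than the paper offers, but note that for the interior-maximum contradiction the coefficients only need to be finite at the putative maximum point, which they are since $u,v\in C^2(\Omega)$; so the localization concern, while not wrong, is somewhat overcautious here.
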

 
 As a consequence, we deduce: 
 
  \begin{proposition}[Touching principle]\label{p-tan} Let $\Sigma_1$ and $\Sigma_2$ be two translating solitons with possibly non-empty boundaries $\partial\Sigma_1$, $\partial\Sigma_2$.   If $\Sigma_1$  and $\Sigma_2$ have  a common tangent interior point and $\Sigma_1$ lies above $\Sigma_2$ around $p$, then $\Sigma_1$ and $\Sigma_2$ coincide at an open set around $p$. The same statement is also valid if $p$ is a common boundary point and the tangent lines to $\partial\Sigma_i$ coincide at $p$.
\end{proposition}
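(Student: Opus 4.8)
The plan is to deduce the touching principle from the comparison principle stated just above, by writing both surfaces locally as graphs and exploiting the fact that the difference of two solutions of \eqref{eqs} satisfies the strong maximum principle (and Hopf boundary lemma). First I would set up coordinates: at the common tangent point $p$, rotate and translate $\r^3$ (which preserves solutions of \eqref{eqs}) so that $p$ is the origin and the common tangent plane is the horizontal plane $z=0$, with the upward normal pointing in the $\vec a$-direction. In a neighbourhood of $p$ both $\Sigma_1$ and $\Sigma_2$ are then graphs $z=u_1(x,y)$ and $z=u_2(x,y)$ over a common small disk $\Omega'\subset\r^2$ (for the boundary case, over a common half-disk, using that the tangent lines to the boundary curves agree at $p$). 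Both $u_1,u_2$ solve $Q[u_i]=0$ in the sense of \eqref{eq4}, and the hypothesis that $\Sigma_1$ lies above $\Sigma_2$ around $p$ translates to $u_1\ge u_2$ on $\Omega'$, with $u_1(p)=u_2(p)$ and $Du_1(p)=Du_2(p)=0$.

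Next I would invoke the structure behind the comparison principle: the difference $w=u_1-u_2$ satisfies a linear homogeneous elliptic equation $Lw=0$ without zeroth-order term, obtained in the standard way by writing $Q[u_1]-Q[u_2]=\int_0^1\frac{d}{dt}Q[u_2+t w]\,dt = a^{ij}(x)\,D_{ij}w + b^i(x)\,D_i w$ with $a^{ij}$ uniformly elliptic on $\Omega'$ (shrinking $\Omega'$ if necessary so that $|Du_i|$ stays bounded) and $b^i$ bounded. Since $w\ge 0$ on $\Omega'$ and $w(p)=0$ with $p$ an interior point, the strong maximum principle (Hopf) forces $w\equiv 0$ on the connected component of $\Omega'$ containing $p$, i.e. $\Sigma_1$ and $\Sigma_2$ coincide on an open neighbourhood of $p$. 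For the boundary version, $p\in\partial\Sigma_1\cap\partial\Sigma_2$ and $w\ge 0$ attains its minimum $0$ at the boundary point $p$; if $w\not\equiv 0$ the Hopf boundary point lemma would give $\partial w/\partial\nu(p)<0$ for the inner normal $\nu$, contradicting that the boundary curves are tangent at $p$ (which, together with $w\ge 0$, forces the normal derivative of $w$ at $p$ to vanish). Hence again $w\equiv 0$ near $p$.

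The main obstacle — really the only subtle point — is the boundary case: one must choose the coordinates and the common domain $\Omega'$ so that $\partial\Sigma_1$ and $\partial\Sigma_2$ project to the \emph{same} boundary arc of $\Omega'$, which is exactly where the hypothesis ``the tangent lines to $\partial\Sigma_i$ coincide at $p$'' is used, and then argue that tangency of the boundary curves, combined with $u_1\ge u_2$, rules out a strictly negative inward normal derivative of $w$ at $p$. A secondary technical care is ensuring the representation as graphs over a common domain is legitimate: this follows from the inverse/implicit function theorem since both surfaces are $C^2$ (indeed smooth, being solutions of an analytic elliptic equation) with the same tangent plane at $p$. Once these set-up issues are handled, the conclusion is immediate from Hopf's strong maximum principle and boundary point lemma, or equivalently from the strict comparison principle quoted above applied on small balls. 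I would keep the write-up short, reducing explicitly to the graph situation and then citing \cite[Th. 3.5 and Lem. 3.4]{gt} for the maximum principle and the Hopf lemma.
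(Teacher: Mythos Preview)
The paper gives no explicit proof of this proposition; it simply records the touching principle ``as a consequence'' of the comparison principle stated immediately before. Your proposal is the standard way to make that consequence precise and is correct in substance.

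One small correction: you write that rotating $\r^3$ ``preserves solutions of \eqref{eqs}'', but as the paper itself notes at the beginning of Section~\ref{sec2}, only translations and rotations about an axis parallel to $\vec a$ do so. This does not damage your argument, however. After a rigid motion taking the common tangent plane to the horizontal plane, both $u_1$ and $u_2$ are local graphs satisfying the \emph{same} quasilinear elliptic equation (the translating soliton equation for the rotated density vector $R^{-1}\vec a$, which in nonparametric form is still of the type \eqref{eq4} with a modified first-order right-hand side). Hence the difference $w=u_1-u_2$ still satisfies a homogeneous linear uniformly elliptic equation with no zeroth-order term on $\Omega'$, and the strong maximum principle and Hopf boundary point lemma from \cite{gt} apply exactly as you outline. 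Just rephrase the parenthetical accordingly.
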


 The tangency principle allows to control the shape of a given translating soliton by comparing, if possible, with other known surfaces   (\cite{lo1,lo3}). For instance,  it is easy to deduce        that   there do not exist closed translating solitons (Proposition \ref{pr-closed}). For this purpose, let  $\Sigma$ be a such surface. Take a vertical plane $\Pi$, which is a translating soliton, far from $\Sigma$ so $\Sigma\cap \Pi=\emptyset$ since $\Sigma$ is a compact set. Let us move $\Pi$ towards $\Sigma$ until the first touching point, which occurs necessarily at some interior point because $\partial\Sigma=\emptyset$. Then  the tangency principle implies that $\Sigma$ is included in $\Pi$, which is impossible.

In the following proposition, we use the tangency principle for compact translating solitons. By virtue of Proposition \ref{pr-closed}, the boundary of  a compact translating soliton is not an empty set. We will see that the boundary of the surface determines, in some sense, the shape of the whole surface that spans. For instance, we characterize the compact translating solitons with circular boundary.  

\begin{proposition}\label{pr-s1}
  Let $\Sigma$ be a compact translating soliton with boundary   $\partial\Sigma$.
\begin{enumerate}
\item If $\partial\Sigma$ is a graph on $\partial\Omega$,  $\Omega\subset\r^2$   a bounded domain, then   $\Sigma$ is a graph on $\Omega$.
\item Let $D\subset\r^2$ be the domain bounded by convex hull of the orthogonal projection of $\partial\Sigma$ on $\r^2$. Then  $\Sigma$ is contained in the solid cylinder $D\times\r$.
\item The maximum of the height of $\Sigma$ is attained at some boundary point, that is,  $\max_{p\in\Sigma}z(p)=\max_{p\in\partial\Sigma}z(p)$.
\end{enumerate}
As a consequence, if $\partial\Sigma$ is   a circle contained in a horizontal plane, then $\Sigma$ is a rotational surface  contained in a bowl soliton (\cite{pe,pyo}).
\end{proposition}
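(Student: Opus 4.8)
The plan is to prove the three assertions of Proposition~\ref{pr-s1} by comparing $\Sigma$ with the cylindrical translating solitons (vertical planes) and the rotational bowl soliton, always invoking the Touching Principle (Proposition~\ref{p-tan}) together with the non-existence of closed translating solitons (Proposition~\ref{pr-closed}).

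\medskip\noindent\textbf{Item (1).} First I would argue by contradiction, supposing $\Sigma$ is not a graph on $\Omega$. Then some vertical line $\ell=\{(x_0,y_0)\}\times\r$ with $(x_0,y_0)\in\Omega$ meets $\Sigma$ in at least two points, so the function on $\Sigma$ measuring height along $\ell$ is non-injective; more useful is to slide a vertical plane. Take a vertical plane $\Pi$ disjoint from the compact set $\Sigma$ and translate it toward $\Sigma$; the first contact point $p$ cannot be a boundary point, because $\partial\Sigma$ projects injectively onto $\partial\Omega$ (being a graph over $\partial\Omega$) and the plane $\Pi$, being vertical, would have to contain the whole segment of $\partial\Sigma$ over a segment of $\partial\Omega$, impossible unless $\partial\Omega$ is not in ``general position'' — so instead I would run the standard Alexandrov-type argument: for a fixed horizontal direction $v$, reflect $\Sigma$ across the family of vertical planes $\Pi_t$ orthogonal to $v$, starting from $t$ large where the reflected part $\Sigma_t^{+}$ is disjoint from $\Sigma$, and decrease $t$ until first contact. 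Since $\partial\Sigma$ is a graph over the (not necessarily convex) $\partial\Omega$, the reflection argument shows the plane of first contact must be tangent to $\partial\Sigma$ or give an interior tangency, and the Touching Principle forces $\Sigma$ to be symmetric — but this is only helpful for convex $\Omega$. The cleaner route, and the one I would actually carry out, is to observe that ``$\Sigma$ is a graph'' is equivalent to ``$N_3=\langle N,\vec a\rangle\neq 0$ nowhere with a consistent sign on the interior''; if $\langle N,\vec a\rangle$ vanishes at an interior point $p$, then the tangent plane at $p$ is vertical, and placing the vertical plane $\Pi$ tangent to $\Sigma$ at $p$ and applying the Touching Principle yields $\Sigma\subset\Pi$, contradicting that $\partial\Sigma$ is a graph over a bounded domain $\Omega$ with non-empty interior. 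Hence $\langle N,\vec a\rangle\neq 0$ on $\Sigma\setminus\partial\Sigma$; connectedness gives a constant sign, so $\Sigma$ is locally a graph, and since $\partial\Sigma$ is a graph over $\partial\Omega$ the projection $\Sigma\to\overline\Omega$ is a local diffeomorphism on the interior and a homeomorphism on the boundary, hence (by a degree/covering argument) a global graph over $\Omega$.

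\medskip\noindent\textbf{Item (2).} Here I would use vertical planes as barriers directly. Let $D$ be the convex hull of $\pi(\partial\Sigma)\subset\r^2$, where $\pi$ is the orthogonal projection onto the $z=0$ plane. For any vertical plane $\Pi$ disjoint from the closed solid cylinder $D\times\r$, i.e.\ any supporting or exterior vertical plane of $D$, translate $\Pi$ toward $\Sigma$ parallel to itself. If $\Sigma\not\subset D\times\r$, there is a direction in which the first contact point $p$ lies strictly outside $D\times\r$ in the interior of $\Sigma$ (it cannot be on $\partial\Sigma$ since $\pi(\partial\Sigma)\subset D$). At such $p$, $\Pi$ is tangent to $\Sigma$ from outside, so the Touching Principle gives $\Sigma\subset\Pi$, again impossible because $\partial\Sigma$ has a two-dimensional convex-hull projection while $\Pi$ projects to a line. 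Therefore $\Sigma\subset D\times\r$.

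\medskip\noindent\textbf{Item (3).} For the height bound I would compare $\Sigma$ with horizontal grim-reaper-type surfaces or, more simply, with the winglike and bowl solitons, but the quickest argument uses a downward translate of a large bowl soliton or, even simpler, the weak maximum principle: on the graph pieces of $\Sigma$ the coordinate function $z$ satisfies an elliptic equation with no zeroth-order term, and more robustly, the height function $h=\langle p,\vec a\rangle$ restricted to $\Sigma$ satisfies $\Delta_\Sigma h=\langle N,\vec a\rangle^2\geq 0$, so $h$ is subharmonic on $\Sigma$ and attains its maximum on $\partial\Sigma$ by the maximum principle for subharmonic functions on a compact manifold with boundary. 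This immediately gives $\max_{\Sigma}z=\max_{\partial\Sigma}z$ (and likewise $\min$ is attained on the boundary, since one may also note $h$ is not constant unless $\langle N,\vec a\rangle\equiv0$, which is excluded on a compact surface with boundary spanning a bounded region).

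\medskip\noindent\textbf{The circular-boundary consequence.} Suppose $\partial\Sigma$ is a circle $C$ of radius $\rho$ lying in a horizontal plane $\{z=c\}$; after a vertical translation take $c=0$, and after a horizontal translation take $C$ centered on the $z$-axis. By item~(1), $\Sigma$ is a graph $z=u$ over the closed disk $\overline\Omega$ of radius $\rho$; by item~(3) either $u\le 0$ or $u\ge 0$ on $\Omega$ according to the sign of $\langle N,\vec a\rangle$ — say $\Sigma$ lies below $\{z=0\}$ after possibly reflecting. Now take the bowl soliton $\mathcal B$ (Proposition~\ref{pr-exi}), which is an entire strictly convex graph with minimum on the $z$-axis; translate it vertically upward so that it lies entirely above $\Sigma$ and then slide it down until first contact. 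If the first contact is at an interior point the Touching Principle forces $\Sigma$ to be an open piece of $\mathcal B$, hence (by unique continuation / real-analyticity of solutions of the elliptic equation~\eqref{eqs}) $\Sigma$ is contained in $\mathcal B$; if instead the first contact occurs only along the boundary circle $C$, then since $\mathcal B$ restricted to the vertical solid cylinder $\overline\Omega\times\r$ is, after the appropriate vertical placement, a graph through $C$, one checks its boundary values match and runs the comparison principle (Proposition, comparison principle) on $\Omega$ in both directions to conclude $u$ equals the bowl graph. Either way $\Sigma$ coincides with a piece of the rotationally symmetric bowl soliton, proving the final assertion.

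\medskip\noindent\textbf{Main obstacle.} The delicate point is item~(1): the Touching Principle is about \emph{interior} tangency or matching boundary tangent lines, so I must be careful to rule out the degenerate possibility that $\langle N,\vec a\rangle$ vanishes only on $\partial\Sigma$ while the sliding-plane first contact is a non-transverse boundary touch; handling this requires the boundary version of the Touching Principle and possibly a perturbation of $\Omega$ or a limiting argument, and it is the step I expect to demand the most care. The circular-boundary conclusion has a parallel subtlety — distinguishing ``interior first contact with the bowl'' from ``boundary-only contact'' — but there the comparison principle on the disk $\Omega$ closes the argument cleanly once the right vertical translate of $\mathcal B$ is identified.
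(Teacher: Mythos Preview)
Your argument for item~(1) has a genuine gap. You claim that if $\langle N,\vec a\rangle(p)=0$ at an interior point $p$, then placing the vertical plane $\Pi$ tangent to $\Sigma$ at $p$ and invoking the Touching Principle forces $\Sigma\subset\Pi$. But Proposition~\ref{p-tan} requires that one surface lie \emph{on one side} of the other near $p$; mere tangency is not enough. At a point with vertical tangent plane, $\Sigma$ may perfectly well cross through $\Pi$ (think of a saddle), so the principle does not apply and you cannot conclude $\langle N,\vec a\rangle\neq0$ on the interior. Even granting that conclusion, the ``degree/covering argument'' you invoke to pass from local graph to global graph over $\Omega$ is only gestured at, and for non-convex $\Omega$ it is not automatic. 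You yourself flag item~(1) as the main obstacle, but you have misidentified where the difficulty lies: it is not a boundary-tangency subtlety but the one-sidedness hypothesis itself.

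The paper's argument for item~(1) sidesteps all of this by comparing $\Sigma$ with a vertical translate of \emph{itself}: if $\Sigma$ is not a graph, two interior points share a vertical line, so sliding $\Sigma^t=\Sigma+t\vec a$ down from far above produces a first touching time $t_1>0$ at an interior point; one-sidedness is then automatic (just before contact $\Sigma^t$ was entirely above $\Sigma$), and the Touching Principle gives $\Sigma^{t_1}=\Sigma$, contradicting $\partial\Sigma^{t_1}\neq\partial\Sigma$.

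Your item~(2) matches the paper. Your item~(3) via $\Delta_\Sigma\langle p,\vec a\rangle=\langle N,\vec a\rangle^2\ge0$ is correct and arguably cleaner than the paper's comparison with a descending horizontal plane; note however that subharmonicity controls only the \emph{maximum}, so your parenthetical about the minimum is unjustified. For the circular-boundary consequence, your bowl-sliding argument combined with uniqueness of the Dirichlet problem on the disk is a valid alternative to the paper's Alexandrov reflection, and indeed reaches the conclusion more directly---but as written everything downstream rests on the flawed item~(1).
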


\begin{proof}
\begin{enumerate}
\item Suppose, contrary to our claim,   that $\Sigma$ is not a graph on $\Sigma$, in particular, there are two distinct points $p,q\in \mbox{int}(\Sigma)$ such that their orthogonal projections  coincide on $\r^2$. Let $\Sigma^t=\Sigma+t\vec{a}$ be a vertical translation of $\Sigma$ by the vector $t\vec{a}$. Move up $\Sigma$ sufficiently far so $\Sigma^t\cap\Sigma=\emptyset$ for $t$ sufficiently large. Now we come back $\Sigma^t$ by letting $t\searrow 0$ until the first time $t_1$ such that $\Sigma^{t_1}\cap\Sigma\not=\emptyset$. The existence of the points $p$ and $q$ ensures that $t_1>0$ and that this intersection occurs at some common interior point of both surfaces. By the tangency principle, $\Sigma^{t_1}=\Sigma$, a contradiction because their boundaries, namely,  $\partial\Sigma^{t_1}=\partial\Sigma+t_1\vec{a}$ and $\partial\Sigma$, do not coincide because $t_1\not=0$.
\item Let $v\in\r^3$ be a fixed arbitrary horizontal direction. Consider a vertical plane $\Pi$ and orthogonal to $v$. Take $\Pi$ sufficiently far so $\Sigma\cap\Pi=\emptyset$. We move $\Pi$ along the direction $v$ towards $\Sigma$ until the first touching point. By the tangency principle, the intersection must occur at some boundary point of $\Sigma$. By repeating this argument for all horizontal vectors, we conclude the proof.
\item Consider a horizontal plane $P$ above $\Sigma$ and sufficiently far so $\Sigma\cap P=\emptyset$. We move dow $P$ until the first touching point $p=(x_0,y_0,z_0)$ with $\Sigma$ at the height $t_1$. The proof is completed by showing that $p\in\partial\Sigma$. By contradiction, suppose that $p$ is an interior point of $\Sigma$. Consider $P$ as the graph of the function $v(x,y)=t_1$. Similarly, consider $\Sigma$ locally as the graph of a function $u$ around $p$ on some domain $\Omega\subset\r^2$. Then we have 
$Q[u]=0$, $Q[v]=-1$, so $Q[v]<Q[u]$. In view of $u\leq v$ on $\partial\Omega$ because $\Sigma$ lies below $P$,   the comparison principle implies $u<v$ in $\Omega$: a contradiction because $u(x_0,y_0)=v(x_0,y_0)$.  
\end{enumerate}
The proof of the last statement is as follows. By   items (1) and (3), $\Sigma$ is a graph on the round disc $\Omega$ bounded by $\partial\Sigma$ and the interior of $\Sigma$ lies below the plane $P$ containing $\partial\Sigma$. Then $\Sigma\cup\Omega$ bounds a $3$-domain. 
By using the technique of the Alexandrov reflection by vertical planes (\cite{al}), it is straightforward to see that  $\Sigma$ is invariant by any rotation whose axis is the vertical line through the center of $\Omega$. Accordingly, $\Sigma$ a surface of revolution, and since its boundary is a circle, then $\Sigma$ is contained in a bowl soliton.
\end{proof}

\begin{remark}\label{rem1} The last statement of the above proposition   gives other   argument for the existence of the bowl soliton. Indeed,  let $\Omega=D_r=\{(x,y)\in\r^2: x^2+y^2<r^2\}$ in (\ref{eqs}) and take the boundary data $\varphi=0$ in (\ref{eqsb}). By Corollary \ref{c1}, the existence and uniqueness of (\ref{eqs})-(\ref{eqsb}) is assured and Proposition  \ref{pr-s1} asserts that the solution is a radial function. Because the rotation axis meets orthogonally the domain $D_r$, then $\Sigma$ is a surface of revolution intersecting  orthogonally the $z$-axis.
\end{remark}

\begin{remark}[Tangency principle]\label{rem2} An inspection of  the comparison argument in the proof of   item (3) in Proposition \ref{pr-s1}  allows to extend the   tangency principle as follows.  Let $\Sigma_1$ and $\Sigma_2$ be two surfaces with weighted mean curvature $H_\phi^1$ and $H_\phi^2$, respectively.    Suppose that  $\Sigma_1$  and $\Sigma_2$ have  a common tangent interior point $p$ and the orientations in both surfaces coincide at $p$. If $H_\phi^1\leq H_\phi^2$ around $p$, and   $\Sigma_2$ lies above $\Sigma_1$ around $p$ with respect to $N(p)$, then $\Sigma_1$ and $\Sigma_2$ coincide at an open set around $p$. The same statement holds if $p$ is a common boundary point and the tangent lines to $\partial\Sigma_i$ coincide at $p$.
\end{remark}

We derive height   and interior gradient estimates for a solution of the translating soliton equation.

  \begin{proposition} \label{pr-31}
   Let $\Omega\subset\r^2$ be a bounded domain.  
  \begin{enumerate}
  \item The solution of (\ref{eqs})-(\ref{eqsb}), if exists, is unique.
  \item  There is a constant $C_1=C_1(\varphi, \Omega)$ such that if    $u$ is a solution of (\ref{eqs})-(\ref{eqsb}), then
\begin{equation}\label{eh}
C_1\leq u\leq \max_{\partial\Omega}\varphi\quad \mbox{in $\Omega$}.
\end{equation}
  \item  If $u$ is a solution of (\ref{eqs})-(\ref{eqsb}), then 
    $$\sup_{ \Omega}|Du|=\max_{\partial\Omega}|Du|.$$
\end{enumerate}
\end{proposition}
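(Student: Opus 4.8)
The plan is to handle the three statements in order, all via the comparison principle (equivalently the maximum principle for the difference of solutions).

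For item (1), uniqueness: suppose $u$ and $v$ both solve (\ref{eqs})--(\ref{eqsb}). Then $Q[u]=Q[v]=0$ in $\Omega$ and $u=v=\varphi$ on $\partial\Omega$. Applying the comparison principle once with $Q[u]\ge Q[v]$ and $u\le v$ on $\partial\Omega$ gives $u\le v$ in $\Omega$; reversing the roles gives $v\le u$. Hence $u\equiv v$.

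For item (2), the height estimate: the upper bound $u\le\max_{\partial\Omega}\varphi$ follows by comparing $u$ with the constant function $v\equiv\max_{\partial\Omega}\varphi$. Indeed $Q[v]=-(1+|Dv|^2)=-1<0=Q[u]$, so $Q[u]>Q[v]$, and $u\le v$ on $\partial\Omega$; the comparison principle yields $u<v$ in $\Omega$, hence the stated inequality. For the lower bound I would build a barrier from the known explicit examples of Section \ref{sec2}: since $\Omega$ is bounded, after a horizontal translation it is contained in the strip $\Omega^0$ of (\ref{eq-do}), hence in every wider strip $\Omega^\theta$, so the grim reaper $w_\theta$ of (\ref{ws}) is defined on a neighbourhood of $\overline{\Omega}$ and is itself a solution of (\ref{eqs}). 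Translating $w_\theta$ downward by a constant so that it lies below $\varphi$ on $\partial\Omega$ (possible because $\overline{\Omega}$ is compact and $w_\theta$ is bounded there, while $\varphi$ is bounded below), comparison gives $u\ge w_\theta - c$ on all of $\Omega$, and $C_1:=\min_{\overline\Omega} w_\theta - c$ depends only on $\varphi$ and $\Omega$ (through the width of the strip containing $\Omega$ and $\min_{\partial\Omega}\varphi$). Alternatively one can use a bowl soliton shifted downward, or a lower barrier of the form $-\log\cos$ in one variable; the essential point, already noted in the introduction as the $C^0$-estimate of \cite{ber}, is simply that a cylindrical soliton over a bounded strip provides the needed subsolution.

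For item (3), the boundary gradient bound: the idea is that $|Du|^2$ (or rather each first derivative of $u$) satisfies a maximum principle, because the translating soliton equation is translation-invariant in the horizontal variables. Concretely, differentiating $Q[u]=0$ with respect to $x_k$ shows that $w:=u_k=\partial u/\partial x_k$ solves a linear homogeneous elliptic equation $a^{ij}(Du)\,D_{ij}w + b^i(Du,D^2u)\,D_i w = 0$ with no zeroth-order term (the right-hand side $-(1+|Du|^2)$ differentiates to a term proportional to $D_k$ of $|Du|^2$, which is first order in $w$), so by the classical maximum principle $u_k$ attains its extrema on $\partial\Omega$; one then passes from the components to $|Du|$. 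A cleaner route, and the one I would actually write, is to compare $u$ with its own horizontal translates: for a small vector $h\in\r^2$, both $u(x)$ and $u(x+h)$ solve (\ref{eqs}) on the overlap $\Omega\cap(\Omega-h)$, so the difference $u(x+h)-u(x)$ satisfies the maximum principle there and attains its sup and inf on the boundary of that overlap; letting $h\to 0$ in the appropriate direction converts this into the statement that the directional derivative is controlled by its boundary values, and maximizing over directions gives $\sup_\Omega|Du|=\max_{\partial\Omega}|Du|$. Care is needed because the overlap domain depends on $h$ and part of its boundary is interior to $\Omega$; one handles this by choosing $h$ normal to $\partial\Omega$ near a would-be interior maximum of $|Du|$, or simply invokes the differentiated-equation argument, which is local and avoids the geometry of the overlap altogether.

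The main obstacle is item (3): the differentiation of the fully nonlinear quasilinear operator $Q$ and the verification that the resulting linear equation for $u_k$ has no zeroth-order coefficient (so that the maximum principle applies with no sign condition) is the one genuinely computational point, and it also requires $u\in C^3$, which follows from interior elliptic regularity for (\ref{eqs}) but should be mentioned. The translate-comparison alternative sidesteps the regularity issue but trades it for the bookkeeping of the $h$-dependent overlap domains; either way this is where the real work lies, whereas items (1) and (2) are immediate consequences of the comparison principle together with the explicit barriers already in hand from Section \ref{sec2}.
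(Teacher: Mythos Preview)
Your proof of items (1) and (3) matches the paper's: uniqueness is the maximum principle, and for the gradient bound the paper does exactly what you call the ``differentiated-equation argument''---it sets $v^k=u_k$, differentiates $Q[u]=0$ in $x_k$, and observes that the resulting linear elliptic equation for $v^k$ has no zeroth-order term, so $|Du|$ cannot have an interior maximum. Your remark that this needs $u\in C^3$ (via interior regularity) is a point the paper leaves implicit.

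The one genuine difference is the lower barrier in item (2). The paper uses the \emph{bowl soliton}: it encloses $\overline{\Omega}$ in a large disk $D_R$, takes the rotational solution $\mathbf b$ with $\mathbf b=0$ on $\partial D_R$, slides it down and back up until first contact, and reads off $C_1=\mathbf b(0)+\min_{\partial\Omega}\varphi$. Your primary suggestion is a grim reaper $w_\theta$ translated downward; this is equally valid, but note that your sentence ``after a horizontal translation it is contained in the strip $\Omega^0$'' is false for bounded domains of $y$-width at least $\pi$. The fix is exactly what you hint at next: choose $|\theta|$ close enough to $\pi/2$ that the strip $\Omega^\theta$, of width $\pi/\cos\theta$, contains $\overline{\Omega}$, and then shift down. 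Either barrier works; the bowl soliton gives a slightly cleaner constant with no auxiliary angle to choose, while the grim reaper avoids invoking the existence of the rotational solution.
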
 
   
   \begin{proof}
  \begin{enumerate}
  \item The uniqueness of solutions of (\ref{eqs})-(\ref{eqsb}) is a consequence of the maximun principle. 
    \item The inequality in the right-hand side of (\ref{eh}) is immediate from the   item (3) of Proposition \ref{pr-s1}.
  
The lower estimate for $u$ in (\ref{eh})  is obtained by means of   bowl soliton   as comparison surfaces. Let $R>0$ be sufficiently large so $\overline{\Omega}\subset D_R$. Let  $\mathcal{B}$ be a bowl soliton defined by a radial function ${\bf b}={\bf b}(r)$ such that $\partial D_R\subset\mathcal{B}$, that is,   ${\bf b}$ is a solution of (\ref{eqs}) in $D_R$ with ${\bf b}=0$ on $\partial D_R$. Let $\mathcal{B}_R$ denote the compact portion of $\mathcal{B}$ below the plane $z=0$.  Move vertically down $\mathcal{B}_R$ sufficiently far so $\Sigma_u$ lies above $\mathcal{B}_R$, that is, if $(x,y,z)\in\Sigma_u$, $(x,y,z')\in\mathcal{B}_R$, then $z>z'$. Then   move up $\mathcal{B}_R$ until the first touching point with $\Sigma_u$. If the first contact occurs at some interior point, then the touching principle implies   $\Sigma_u\subset \mathcal{B}_R$. The other possibility is that  the first contact point occurs when $\mathcal{B}_R$ touches a boundary point of $\Sigma_u$. In both cases, we  conclude $\mathbf{b}(0)\leq u-\min_{\partial\Omega}\varphi$ and consequently, the constant   $C_1= \mathbf{b}(0)+\min_{\partial\Omega}\varphi$ satisfies $C_1\leq u$.
  \item   Define the function  $v^i=u_i$, $i=1,2$, and   differentiate (\ref{eq4}) with respect to the variable $x_k$, obtaining 
 \begin{equation}\label{eq3}
 \left((1+|Du|^2)\delta_{ij}-u_iu_j\right)v_{i;j}^k+2\left(u_i\Delta u-u_ju_{i;j}-u_i \right)v_i^k=0,
 \end{equation}
for each $k=1,2$. Hence $v^k$ satisfies a linear elliptic equation and by the maximum principle,  $|v^k|$ has not a maximum at some interior point. Consequently,   the maximum of $|Du|$ on the compact set $\overline{\Omega}$ is attained at some boundary point.
 
  \end{enumerate}
  \end{proof}

%%%%%%%%%%%%%%%%%%%%%%%%%%%%%%%%%%%%%
\section{The Dirichlet problem in  bounded convex domains}\label{sec4}
%%%%%%%%%%%%%%%%%%%%%%%%%%%%%%%%%%%%%%%%

In this section we prove Corollary \ref{c1}. Recall that the existence result of Serrin is also valid  for the general family of equations (\ref{eqs3}). By completeness of this paper,  we do a proof focusing on (\ref{eqs}) and following ideas of \cite{se}. We apply   the method of continuity which requires  the existence of    a priori $C^0$ and $C^1$ estimates for a solution  in order to   provide the necessary compactness properties.  These will be derived proving that  $u$  admits   barriers from above and from below along $\partial\Omega$. The higher order regularities of solutions hold under assuming smoothness hypothesis: \cite[Ths. 6.17, 6.19, 13.8]{gt}.

\begin{theorem}\label{t-ex}Let $\Omega\subset\r^2$ be a bounded   $C^{2,\alpha}$-domain whose inward satisfies   $\kappa\geq 0$. If $\varphi\in C^0(\partial\Omega)$, then there is a unique solution of  (\ref{eqs})-(\ref{eqsb}).
\end{theorem}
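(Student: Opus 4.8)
The plan is to prove Theorem~\ref{t-ex} by the classical method of continuity applied to the family of equations
\[
Q_\tau[u]:=\mbox{div}\left(\frac{Du}{\sqrt{1+|Du|^2}}\right)-\frac{\tau}{\sqrt{1+|Du|^2}}=0,\qquad \tau\in[0,1],
\]
with boundary condition $u=\tau\varphi$ on $\partial\Omega$ (or $u=\varphi$ for all $\tau$; either normalization works, but letting the boundary data scale as well makes $\tau=0$ trivially solvable by $u\equiv 0$). Assuming first that $\varphi\in C^{2,\alpha}(\partial\Omega)$, I would let $S=\{\tau\in[0,1]: Q_\tau[u]=0,\ u=\tau\varphi\ \mbox{on}\ \partial\Omega\ \mbox{has a solution in}\ C^{2,\alpha}(\overline\Omega)\}$. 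The set $S$ is nonempty ($0\in S$), and it is open in $[0,1]$ by the implicit function theorem in H\"older spaces, since the linearized operator is uniformly elliptic with smooth coefficients and, by the maximum principle (uniqueness, Proposition~\ref{pr-31}(1)), has trivial kernel; the relevant Schauder theory is \cite[Ths.~6.17, 6.19, 13.8]{gt}. The crux is showing $S$ is closed, which by the Leray--Schauder machinery (\cite[Ch.~11, 13]{gt}) reduces to establishing a priori estimates in $C^{1,\beta}(\overline\Omega)$ uniform in $\tau$: a $C^0$ bound, a boundary gradient bound, and then an interior/global gradient bound.

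The $C^0$ bound and the global gradient bound are already essentially in hand: Proposition~\ref{pr-31}(2) gives $C_1\le u\le\max_{\partial\Omega}\varphi$ using the bowl soliton as a lower barrier and Proposition~\ref{pr-s1}(3) as an upper barrier, and the argument is uniform in $\tau\in[0,1]$ since $Q_\tau$ only makes the right-hand side smaller in absolute value (one can use the bowl soliton for $\tau=1$ as a barrier for all $\tau$, or note the grim reaper / vertical plane comparisons degrade monotonically). Proposition~\ref{pr-31}(3) reduces the global gradient estimate to a boundary gradient estimate via the maximum principle applied to the differentiated equation~(\ref{eq3}). So the main remaining task — and the one I expect to be the principal obstacle — is the boundary gradient estimate $\max_{\partial\Omega}|Du|\le C(\varphi,\Omega)$, uniform in $\tau$. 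This is exactly where the convexity hypothesis $\kappa\ge 0$ enters.

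For the boundary gradient estimate I would construct, near each boundary point $x_0\in\partial\Omega$, upper and lower barriers of the form $w^\pm(x)=\tau\varphi(x_0)+\psi(d(x))\pm(\mbox{linear correction matching }\varphi)$, where $d=d(x)$ is the distance to $\partial\Omega$ and $\psi$ is a concave increasing function with $\psi(0)=0$, $\psi'(0)$ large, e.g. $\psi(d)=\frac1\nu\log(1+kd)$ for suitable constants $k,\nu$. The point is to choose $\psi$ so that $Q_\tau[w^+]\le 0\le Q_\tau[w^-]$ in a tubular neighborhood $\{d<d_0\}\cap\Omega$ and $w^-\le u\le w^+$ on its boundary, then invoke the comparison principle (the Proposition following~(\ref{eq4})) to sandwich $u$ and read off the normal derivative bound. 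Computing $Q_\tau[w^+]$: since $w^+$ depends mainly on $d$, the divergence term produces $\psi''/(1+\psi'^2)^{3/2}$ plus a term $(\Delta d)\,\psi'/(1+\psi'^2)^{1/2}$, and here $-\Delta d\big|_{\partial\Omega}=\kappa\ge 0$ is precisely the inward curvature of $\partial\Omega$; convexity makes the bad curvature term have the favorable sign, so one needs only $\kappa\ge 0$ (not $\kappa>0$, because the right-hand side $\tau/\sqrt{1+|Dw|^2}$ is bounded and does not fight against the leading term — this is the $n=2$ phenomenon, contrasted with $2<n<3$ in Theorem~\ref{t1}). For nonconvex $\Omega$ the term $(\Delta d)\psi'$ cannot be absorbed and the estimate genuinely fails, which matches the ``only if'' direction already recorded in Corollary~\ref{c1}; I would cite the corresponding barrier computations in \cite{se} rather than reproduce all constants.

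Finally, to pass from $\varphi\in C^{2,\alpha}(\partial\Omega)$ to $\varphi\in C^0(\partial\Omega)$, I would approximate $\varphi$ uniformly by $\varphi_k\in C^{2,\alpha}(\partial\Omega)$ (using the $C^{2,\alpha}$ regularity of $\partial\Omega$ to extend and mollify), obtain solutions $u_k$ by the above, and observe that by the comparison principle $\|u_k-u_\ell\|_{C^0(\overline\Omega)}\le\|\varphi_k-\varphi_\ell\|_{C^0(\partial\Omega)}$, so $(u_k)$ is Cauchy and converges uniformly to some $u\in C^0(\overline\Omega)$ with $u=\varphi$ on $\partial\Omega$; interior gradient and higher estimates (\cite[Th.~13.8 and the interior Schauder estimates]{gt}) applied on compact subdomains give $u_k\to u$ in $C^2_{\mathrm{loc}}(\Omega)$, so $Q[u]=0$ in $\Omega$ and $u\in C^2(\Omega)\cap C^0(\overline\Omega)$ is the desired solution. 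Uniqueness is Proposition~\ref{pr-31}(1). The one subtlety to watch is that the interior estimates must be $\tau$- and $k$-independent on each compact $\Omega'\Subset\Omega$, which follows from the uniform $C^0$ bound together with the interior gradient estimate Proposition~\ref{pr-31}(3) (or De Giorgi--Nash--Moser applied to~(\ref{eq3})).
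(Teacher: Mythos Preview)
Your proposal is correct and follows essentially the same route as the paper: continuity method on the family $Q_\tau$, $C^0$ and interior gradient estimates from Proposition~\ref{pr-31}, boundary gradient estimates via a logarithmic barrier $\psi(d)=a\log(1+bd)$ in the distance function (where $\kappa\geq 0$ enters through $\Delta d\leq 0$), and approximation for $C^0$ data. The only notable cosmetic differences are that the paper keeps the boundary data fixed at $\varphi$ (so $t=0$ is the minimal surface with data $\varphi$, whose existence is quoted from \cite{se}), uses that minimal solution $v^0$ itself as the upper barrier (since $Q[v^0]<0$) rather than building a second log-barrier $w^+$, and for continuous $\varphi$ sandwiches by monotone $C^{2,\alpha}$ approximants from above and below instead of your Cauchy argument.
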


In  Proposition \ref{pr-31}, we found height estimates for $u$ and we proved that the interior gradient estimates are obtained once we have gradient estimates of $u$ along $\partial\Omega$. Thus, we now establish these estimates on the boundary.

     \begin{proposition}[Boundary gradient estimates]  \label{pr42}
   Let $\Omega\subset\r^2$ be a  bounded  domain with $C^2$-boundary,  $\kappa\geq 0$ and let $\varphi\in C^2(\partial\Omega)$. If  $u\in C^2(\Omega)\cap C^1(\overline{\Omega})$ is a   solution of (\ref{eqs})-(\ref{eqsb}), then there is a constant $C_2=C_2(\Omega, C_1,\|\varphi\|_{2;\overline{{\mathcal N}_\epsilon}})$ such that 
   $$\max_{\partial\Omega}|Du|\leq C_2,$$
    where $\varphi$ is extended to some tubular neighborhood ${\mathcal N}_\epsilon$ of $\partial\Omega$.
\end{proposition}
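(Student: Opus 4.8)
The plan is to construct explicit upper and lower barriers for $u$ near each boundary point and invoke the comparison principle (Proposition "Comparison principle") to bound $u$ from above and below by these barriers in a tubular neighborhood; differentiating at the boundary then controls $|Du|$ there. Since $\kappa\ge 0$ means $\Omega$ is convex, the natural strategy is to work in a collar ${\mathcal N}_\epsilon$ of $\partial\Omega$ where the distance function $d(x)=\mathrm{dist}(x,\partial\Omega)$ is $C^2$, and to seek barriers of the form $w(x)=\varphi(x)+\psi(d(x))$, where $\varphi$ has been extended to ${\mathcal N}_\epsilon$ using the normal exponential map (so $\|\varphi\|_{2;\overline{{\mathcal N}_\epsilon}}$ enters the constant), and $\psi:[0,\epsilon]\to\r$ is a one-variable function to be chosen with $\psi(0)=0$, $\psi'>0$ large, $\psi''<0$. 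The upper barrier for $Q$ is $w^+=\varphi+\psi(d)$ and one checks $Q[w^+]\le 0=Q[u]$ in ${\mathcal N}_\epsilon$; the lower barrier is $w^-=\varphi-\psi(d)$ with $Q[w^-]\ge 0$. On $\partial{\mathcal N}_\epsilon$ one needs $w^+\ge u\ge w^-$: on the inner boundary $\{d=\epsilon\}$ this is arranged by making $\psi(\epsilon)$ larger than the oscillation bound coming from the already-established height estimates (\ref{eh}) in Proposition "Boundary gradient estimates" (so $C_1$ enters), and on $\partial\Omega$ it holds with equality since $u=\varphi$ there and $\psi(0)=0$.

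The computational core is plugging $w=\varphi+\psi(d)$ into the operator $Q[w]=(1+|Dw|^2)\Delta w-w_iw_jw_{i;j}-(1+|Dw|^2)$. Writing $Dw=D\varphi+\psi'Dd$ and using $|Dd|=1$, $D^2d\le 0$ on a convex domain (the Hessian of the distance function is non-positive because $\kappa\ge 0$), one finds that the dominant term as $\psi'\to\infty$ is $-\psi'^2\psi''|Dd|^2$ from the $-w_iw_jw_{i;j}$ piece, which has a favorable sign when $\psi''<0$; this term must dominate all the remaining terms, which are $O(\psi'^2)$ (from $(1+|Dw|^2)\Delta\varphi$, the $\varphi$–$\psi'Dd$ cross terms, and the $-(1+|Dw|^2)$ term). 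So the needed differential inequality reduces to something of the shape $-\psi''\ge c_0(1+\psi')$ for a constant $c_0$ depending only on $\Omega$, $\|\varphi\|_{2;\overline{{\mathcal N}_\epsilon}}$ and the bounds on $D^2 d$. A standard choice such as $\psi(t)=\tfrac{1}{c_0}\log(1+Kt)$ (or $\psi(t)=A(1-e^{-\lambda t})$ with $\lambda,A$ large) solves this with $\psi'(0)$ prescribed as large as we like, and then $|Du|$ on $\partial\Omega$ is bounded by $|D\varphi|+\psi'(0)$, giving $C_2$.

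I would organize the write-up as: (i) fix $\epsilon>0$ so that $d\in C^2(\overline{{\mathcal N}_\epsilon})$ and $D^2d\le 0$ there, and extend $\varphi$; (ii) record the height bounds from Proposition "Boundary gradient estimates" item (2) to get $M:=\sup_\Omega|u|+\sup_{\overline{{\mathcal N}_\epsilon}}|\varphi|$ controlled by $C_1$ and $\|\varphi\|$; (iii) exhibit the family $\psi$ and verify $Q[\varphi+\psi(d)]\le 0$ via the sign computation above, choosing the free parameter so that $\psi(\epsilon)\ge 2M$; (iv) apply the comparison principle on ${\mathcal N}_\epsilon$ to get $u\le\varphi+\psi(d)$, and symmetrically $u\ge\varphi-\psi(d)$; (v) since equality holds on $\partial\Omega$, differentiate in the inward normal direction to conclude $|\partial_n u|\le\psi'(0)+|\partial_n\varphi|$, and combine with tangential derivatives $=|D_T\varphi|$ on $\partial\Omega$. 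The main obstacle I anticipate is the barrier computation in step (iii): correctly tracking the anisotropic second-order term $-w_iw_jw_{i;j}$, confirming that on a convex domain the distance-Hessian contribution carries the right sign, and verifying that the dangerous terms really are only $O(\psi'^2)$ (not $O(\psi'^3)$), so that a single $\log$- or exponential-type profile suffices rather than something that would blow up as $\epsilon\to 0$. Convexity ($\kappa\ge 0$) is exactly what is needed here and is presumably where the hypothesis is used; without it one would need $\epsilon$ small and an extra term fighting the positive part of $D^2d$, which is why the theorem requires $\kappa\ge 0$ globally.
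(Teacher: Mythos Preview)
Your overall architecture matches the paper's: work in a collar $\mathcal N_\epsilon$, use the signed distance $d$ with $|Dd|=1$ and $D^2d\le 0$ (this is where $\kappa\ge 0$ is used), build barriers of the form $\varphi\pm h(d)$ with a log-type profile, and invoke the comparison principle. The paper economizes on the upper barrier by taking the minimal graph $v^0$ with $v^0=\varphi$ on $\partial\Omega$ (so $Q[v^0]=-(1+|Dv^0|^2)<0$ and no computation is needed), reserving the explicit $w=\varphi-h(d)$ for the lower barrier; your symmetric choice $\varphi\pm\psi(d)$ is also fine and the paper remarks this.

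However, your asymptotic bookkeeping in step~(iii) is off and would mislead you if you followed it. You claim the dominant favorable term is $-\psi'^2\psi''|Dd|^2$ coming from $-w_iw_jw_{i;j}$. In fact the $\psi'^2\psi''$ contributions from $(1+|Dw|^2)\Delta w$ and from $-w_iw_jw_{i;j}$ cancel: writing $a_{ij}=(1+|Dw|^2)\delta_{ij}-w_iw_j$, the coefficient of $\psi''$ is $a_{ij}d_id_j=(1+|Dw|^2)-\langle Dw,Dd\rangle^2=1+|D\varphi|^2-\langle D\varphi,Dd\rangle^2$, which is bounded between $1$ and $1+|D\varphi|^2$, independently of $\psi'$. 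Meanwhile the unfavorable terms (from $a_{ij}\varphi_{i;j}$ and from $-(1+|Dw|^2)$) are genuinely of size $\psi'^2$. So the correct reduced inequality is
\[
-\psi''\ \ge\ c_0\bigl(1+\psi'^2\bigr),
\]
quadratic in $\psi'$, not your linear $-\psi''\ge c_0(1+\psi')$. The log profile $\psi(t)=a\log(1+bt)$ survives precisely because $\psi''=-\psi'^2/a$, and one then chooses $b$ large so that $1/a=\log(1+b)/c$ beats $c_0$; this is exactly the paper's computation. By contrast your suggested alternative $\psi(t)=A(1-e^{-\lambda t})$ only gives $-\psi''=\lambda\psi'$, which cannot dominate $c_0\psi'^2$ while simultaneously making $\psi(\epsilon)$ large, so that option should be dropped. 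Once you correct this balance, the rest of your outline (boundary comparison on $\partial\mathcal N_\epsilon$ using the height estimate, then bounding the inward normal derivative) goes through as in the paper.
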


 \begin{proof} 
 We consider the operator $Q[u]$ defined in (\ref{eq4}), which we write now as
 \begin{equation}\label{eq44}
 Q[u]=a_{ij}u_{i;j}-(1+|Du|^2),\quad a_{ij}=(1+|Du|^2)\delta_{ij}-u_iu_j.
 \end{equation} An upper barrier for $u$  is obtained by considering the  solution $v^0$ of the Dirichlet problem for the minimal surface equation in $\Omega$ with the same boundary data $\varphi$: the existence of $v^0$ is assured in  (\cite{se}). Because $Q[v^0]<0=Q[u]$ and $v^0=u$ on $\partial\Omega$, we conclude  $v^0>u$ in $\Omega$ by the comparison principle.

  We now find a lower barrier for $u$. Here we use  the distance function in a small tubular neighborhood of $\partial\Omega$ in $\Omega$.  
  Consider on $\overline{\Omega}$ the distance function to $\partial\Omega$,  $d(x)=\mbox{dist}(x,\partial\Omega)$ and let $\epsilon>0$ be sufficiently small so  $\mathcal{N}_\epsilon=\{x\in\overline{\Omega}: d(x)<\epsilon\}$ is a tubular neighborhood of $\partial\Omega$.   We   parametrize $\mathcal{N}_\epsilon$  using normal coordinates $x\equiv (t,\pi(x)) \in\mathcal{N}_\epsilon$, where we write $x=\pi(x)+t\nu(\pi(x))$ for some $t\in [0,\epsilon)$,   $\pi:\mathcal{N}_\epsilon\rightarrow\partial\Omega$ is the orthogonal projection and $\nu$ is the unit   normal vector to $\partial\Omega$ pointing to $\Omega$. Among the properties of the function $d$, we know that $d$ is   $C^2$,  $|Dd|(x)=1$,  and $\Delta d \leq -\kappa(\pi(x))$  for all $x\in\mathcal{N}_\epsilon$.  

We extend $\varphi$ on $\mathcal{N}_\epsilon$ by   $\varphi(x)=\varphi(\pi(x))$. Define in $\mathcal{N}_\epsilon$ the function 
$$w=-h\circ d+\varphi,$$
 where  
  $$h(t)=a\log(1+bt),\quad a=\frac{c}{\log(1+b)},$$
where $b>0$  will be chosen later. Here    $c$ is any constant with
\begin{equation}\label{cc}
c>2\left(\|\varphi\|_{0}-C_1\right),
\end{equation}
and $C_1$ is the constant of   (\ref{eh}). Here and subsequently, $\|\cdot\|$ denotes the norm computed in $\overline{\mathcal{N}_\epsilon}$. It is immediate that $h\in C^\infty([0,\infty))$, $h'>0$ and $h''=-h'^2/a$. The   first and second derivatives of $w$ are $w_i=-h'd_i+\varphi_i$ and $w_{i;j}=-h''d_id_j-h'd_{i;j}+\varphi_{i;j}$. The computation of  $Q[w]$ leads to
\begin{equation}\label{q1}
Q[w]=-h''a_{ij}d_id_j-h'a_{ij}d_{i;j}+a_{ij}\varphi_{i;j}-(1+|Dw|^2). 
\end{equation}
  From $|Dd|=1$, it follows that $\langle D(Dd)_x\xi,Dd(x)\rangle=0$ for all $\xi\in\r^2$. If  $\{v_1,v_2\}$ is the canonical basis of $\r^2$, by taking $\xi=v_i$, we find $d_{i;j}d_j=0$. Thus
\begin{eqnarray*}
w_iw_jd_{i;j}&=&(-h'd_i+\varphi_i)(-h'd_j+\varphi_j)d_{i;j}=(h'^2d_i-2h'\varphi_i)d_jd_{i;j}+\varphi_i \varphi_jd_{i;j}\\
&=&\varphi_{i}\varphi_jd_{i;j}\geq |D\varphi|^2\Delta d,
\end{eqnarray*}
where the last inequality is due to $D^2d$ is negative. Using this inequality and   the definition of $a_{ij}$ in (\ref{eq44}), we derive
\begin{equation}\label{qa}
a_{ij}d_{i;j}=(1+|Dw|^2)\Delta d-w_iw_j d_{i;j}\leq(1+|Dw|^2-|D\varphi|^2)\Delta d.
\end{equation}
 Notice that 
 \begin{equation}\label{qc}
 |Dw|^2=h'^2+|D\varphi|^2-2h' \langle Dd, D\varphi\rangle.
 \end{equation}
 Then
\begin{eqnarray*}
1+|Dw|^2-|D\varphi|^2&=&1+h'^2-2h' \langle D\varphi, Dd\rangle\geq 1+h'^2-2h'|D\varphi|\\
&=&1+\frac{c^2b^2}{\log(1+b)^2(1+bt)^2}-\frac{2cb}{\log(1+b)(1+bt)}|D\varphi|>0
\end{eqnarray*}
 if $b$ is sufficiently large, with $b$ a constant depending on $\partial\Omega$, $c$ and $|D\varphi|$. Since $\Delta d\leq 0$ because $D^2 d$ is negative, we deduce from (\ref{qa}) that  $a_{ij}d_{i;j}\leq 0$. 
 
 The ellipticity of $A=(a_{ij})$ can be written as   $|\xi|^2\leq a_{ij}\xi_i\xi_j\leq(1+|Dw|^2)|\xi|^2$ for all $\xi\in\r^2$. Taking $\xi=Dd$, then $1=|Dd|^2\leq a_{ij}d_id_j\leq (1+|Dw|^2)$.   Since $h''<0$, we have  
 \begin{equation}\label{qb}
 h''(a_{ij}d_id_j)\leq h''.
 \end{equation}
  On the other hand, if $\cdot$ is the usual scalar product in the set of the square matrix, 
   $$|A|^2=  A\cdot A=1+(1+|Dw|^2)^2\leq 2(1+|Dw|^2)^2,$$ 
   hence
 $$a_{ij}\varphi_{i;j}=  A\cdot D^2\varphi \geq -|A| |D^2\varphi|\geq -\sqrt{2} |D^2\varphi|(1+|Dw|^2).$$ 
 By combining this inequality with $\Delta d\leq 0$, and inserting (\ref{qa}) and (\ref{qb}) in (\ref{q1}),    we deduce 
\begin{eqnarray*}
Q[w]&\geq& -h''-h'(1+|Dw|^2-|D\varphi|^2)\Delta d-(1+\sqrt{2}|D^2 \varphi|)(1+|Dw|^2)\\
&\geq &-h''-(1+\sqrt{2}|D^2 \varphi|)(1+|Dw|^2)\geq -h''-\beta(1+|Dw|^2),
\end{eqnarray*}
where $\beta=1+\sqrt{2}\| D^2\varphi\|_{0}$. Take $b$ sufficiently large if necessary, to ensure that $1/a-\beta>0$, so $\beta$ depends on $\|D^2\varphi\|_{0}$, $C_1$ and $\|\varphi\|_0$. Using that   $h''=-h'^2/a$ and (\ref{qc}), we obtain 
\begin{eqnarray}\label{q4}
Q[w]&\geq& \frac{h'^2}{a}-\beta(1+|Dw|^2)=\left(\frac{1}{a}-\beta\right)h'^2+2h'\beta \langle Dd, D\varphi\rangle   -\beta(1+\| D\varphi\|_{0}^2)\nonumber\\
&\geq &\left(\frac{1}{a}-\beta\right)h'^2-2h'\beta\| D\varphi\|_{0}-\beta(1+\| D\varphi\|_{0}^2)\nonumber\\
&=&\left(\frac{1}{a}-\beta\right)\frac{a^2b^2}{(1+bt)^2}-2\beta\frac{ab}{1+bt}  \|D\varphi\|_{0} -\beta(1+\|D\varphi\|_{0}^2).
\end{eqnarray}
We write the  last term  as a function on $\mathcal{N}_\epsilon$, namely, $g(x)=g(t,\pi(x))$. At $t=0$, 
\begin{eqnarray*}
g(0)&=&\left(\frac{1}{a}-\beta\right)\frac{c^2b^2}{\log(1+b)^2}-2\beta\frac{cb}{\log(1+b)}  \|D\varphi\|_{0} -\beta(1+\|D\varphi\|_{0}^2)\\
&=&\frac{cb}{\log(1+b)}\left( \left(\frac{1}{a}-\beta\right)\frac{cb}{\log(1+b)}-2\beta \|D\varphi\|_{0} \right)-\beta(1+\|D\varphi\|_{0}^2)
\end{eqnarray*}
Therefore, if $b$ is sufficiently large, $g(0)>0$.  Since $\partial\Omega$ is compact, by an argument of continuity, $b$ can be  chosen sufficiently large to ensure that  $g(t)>0$ in $\mathcal{N}_\epsilon$.   For this choice of $b$, we find $Q[w]>0$.

 In order to assure that $w$ is a local lower barrier  in $\mathcal{N}_\epsilon$, we have to see  that 
\begin{equation}\label{mm}
w\leq u\quad \mbox{in $\partial\mathcal{N}_\epsilon$}.
\end{equation}
   In $\partial\mathcal{N}_\epsilon\cap\partial\Omega$,   the distance function  is $d=0$, so $w=\varphi=u$. On the other hand, let us further require $b$ large enough  so  $\log(1+b\epsilon)/\log(1+b)\geq 1/2$.  Then 
in $\partial\mathcal{N}_\epsilon\setminus\partial\Omega$, we find from (\ref{cc}) that 
\begin{eqnarray*}
w&=&-c\frac{\log(1+b\epsilon)}{\log(1+b)}+\varphi\leq -\frac{\|\varphi\|_{0}-C_1}{2}\frac{\log(1+b)}{\log(1+b\epsilon)}+\varphi\\
&\leq&   C_1-\|\varphi\|_{0}+\varphi\leq C_1\leq u
\end{eqnarray*}
   in  $\partial\mathcal{N}_\epsilon\setminus\partial\Omega$.   Definitively, (\ref{mm}) holds  in $\partial\mathcal{N}_\epsilon\setminus\partial\Omega$. Because $Q[w]>0=Q[u]$, we conclude     $w\leq w$ in $\mathcal{N}_\epsilon$ by the comparison principle. 
  
Consequently, we have proved the existence of lower and upper barriers for $u$  in $\mathcal{N}_\epsilon$, namely, $w\leq u\leq v^0$. Hence  
$$\max_{\partial\Omega}|Du|\leq C_2:=\max\{\|Dw\|_{0;\partial\Omega}, \|Dv^0\|_{0;\partial\Omega}\}$$
  and both values $\|Dw\|_{0;\partial\Omega}, \|Dv^0\|_{0;\partial\Omega}$ depend only on  $\Omega$, $C_1$ and $\varphi$.  This completes the proof of proposition. 
    \end{proof}
     
\begin{remark}
\begin{enumerate}
\item It is possible,   instead  the function $v^0$, to use   $w=h\circ d+\varphi$ for an  upper barrier of $u$. 
 \item The use of the auxiliary function $h(d)=a\log(1+bd)$ for obtaining boundary gradient estimates is standard in the theory of elliptic equations (see \cite[Ch. 14]{gt} as a general reference). It should also be
mentioned that Bernstein was the first author whose employed  this function    to construct barriers for solutions in elliptic equations in two variables, assuming analytic hypothesis: \cite[pp. 265-6 ]{be0}. 
 \end{enumerate}
 \end{remark}
 
\begin{proof}[of Theorem  \ref{t-ex}]
 %%%%%%%%%%%%%%%%%%%%%%%
 
 In a first step,  we demonstrate the theorem when  $\varphi\in C^{2,\alpha}(\overline{\Omega})$. We establish the solvability  of the Dirichlet problem   (\ref{eqs})-(\ref{eqsb}) by applying a slightly modification of the   method of continuity  (\cite[Sec. 17.2]{gt}). Define  the family of Dirichlet  problems parametrized by $t\in [0,1]$ by  
 $$ \left\{\begin{array}{cll}
Q_t[u]&=&0 \mbox{ in $\Omega$}\\
 u&=& \varphi \mbox{ on $\partial\Omega,$}
 \end{array}\right.$$
 where 
   $$Q_t[u]= (1+|Du|^2)\Delta u-u_iu_ju_{i;j}- t(1+|Du|^2).$$
   As usual, let 
$$\mathcal{A}=\{t\in [0,1]: \exists u_t\in C^{2,\alpha}(\overline{\Omega}),   Q_t[u_t]=0, {u_t}_{|\partial\Omega}=\varphi\}.$$ 
The theorem is established if   $1\in \mathcal{A}$. For this purpose, we prove that $\mathcal{A}$ is a non-empty open and closed subset of $[0,1]$.

\begin{enumerate}
\item  The set  $\mathcal{A}$ is not empty. Let us observe that $0\in\mathcal{A}$ because  the minimal solution  $v^0$  defined in Proposition \ref{pr42} corresponds with $t=0$. 
\item The set $\mathcal{A}$ is open in $[0,1]$. Given $t_0\in\mathcal{A}$, we need to prove that there exists $\epsilon>0$ such that $(t_0-\epsilon,t_0+\epsilon)\cap [0,1]\subset\mathcal{A}$. Define the map $T(t,u)=Q_t[u]$ for $t\in\r$ and $u\in  C^{2,\alpha}(\overline{\Omega})$. Then $t_0\in\mathcal{A}$ if and only if $T(t_0,u_{t_0})=0$. If we show that the derivative  of $Q_t$ with respect to $u$, say $(DQ_t)_u$, at the point $u_{t_0}$ is an isomorphism, it follows from the Implicit Function Theorem the existence of an open set $\mathcal{V}\subset C^{2,\alpha}(\overline{\Omega})$, with $u_{t_0}\in \mathcal{V}$ and a $C^1$ function $\psi:(t_0-\epsilon,t_0+\epsilon)\rightarrow \mathcal{V}$ for some $\epsilon>0$, such that $\psi(t_0)=u_{t_0}>0$ and  $T(t,\psi(t))=0$ for all $t\in (t_0-\epsilon,t_0+\epsilon)$: this guarantees that $\mathcal{A}$ is an open  set of  $[0,1]$.

To show that $(DQ_t)_u$ is one-to-one is equivalent that say that for any $f\in C^\alpha(\overline{\Omega})$, there is a unique solution $v\in C^{2,\alpha}(\overline{\Omega})$ of the linear equation $Lv:=(DQ_t)_u(v)=f$ in $\Omega$ and $v=\varphi$ on $\partial\Omega$. The computation of $L$ was done in Proposition \ref{pr-31}, namely, 
$$Lv=(DQ_t)_uv=a_{ij}(Du)v_{i;j}+\mathcal{B}_i(Du,D^2u)v_i,$$
where $a_{ij}$ is as in (\ref{eq4}) and $\mathcal{B}_i=2(u_i \Delta u-u_ju_{i;j}-tu_i)$.
The existence and uniqueness is assured by standard theory (\cite[Th. 6.14]{gt}).

\item The set $\mathcal{A}$ is closed in $[0,1]$. Let $\{t_k\}\subset\mathcal{A}$ with $t_k\rightarrow t\in [0,1]$. For each $k\in\mathbb{N}$, there is $u_k\in C^{2,\alpha}(\overline{\Omega})$  such that $Q_{t_k}[u_k]=0$ in $\Omega$ and $u_k=\varphi$ in $\partial\Omega$. Define the set
$$\mathcal{S}=\{u\in C^{2,\alpha}(\overline{\Omega}): \exists t\in [0,1]\mbox{ such that }Q_{t}[u]=0 \mbox{ in }\Omega, u_{|\partial\Omega}=\varphi\}.$$
Then $\{u_k\}\subset\mathcal{S}$. If we see that the set $\mathcal{S}$ is bounded in $C^{1,\beta}(\overline{\Omega})$ for some $\beta\in[0,\alpha]$, and since $a_{ij}=a_{ij}(Du)$ in (\ref{eq4}), the Schauder theory proves that $\mathcal{S}$ is bounded in $C^{2,\beta}(\overline{\Omega})$, in particular, $\mathcal{S}$ is precompact in $C^2(\overline{\Omega})$  (Th. 6.6 and Lem. 6.36 in \cite{gt}). Hence there is a subsequence $\{u_{k_l}\}\subset\{u_k\}$ converging to some $u\in C^2(\overline{\Omega})$ in $C^2(\overline{\Omega})$. Since $T:[0,1]\times C^2(\overline{\Omega})\rightarrow C^0(\overline{\Omega})$ is continuous, we obtain  $Q_t[u]=T(t,u)=\lim_{l\rightarrow\infty}T(t_{k_l},u_{k_l})=0$ in $\Omega$. Moreover, $u_{|\partial\Omega}=\lim_{l\rightarrow\infty} {u_{k_l}}_{|\partial\Omega}=\varphi$ on $\partial\Omega$, so $u\in C^{2,\alpha}(\overline{\Omega})$ and consequently, $t\in \mathcal{A}$.

Definitively,   $\mathcal{A}$ is  closed in $[0,1]$ provided we find a constant $M$ independent on $t\in\mathcal{A}$,  such that  
$$
\|u_t\|_{C^1(\overline{\Omega})}=\sup_\Omega |u_t|+\sup_\Omega|Du_t|\leq M.
$$
Let $t_1<t_2$, $t_i\in [0,1]$, $i=1,2$. Then $Q_{t_1}[u_{t_1}]=0$ and 
$$Q_{t_1}[u_{t_2}]=(t_2-t_1) (1+|Du_{t_2}|^2)>0=Q_{t_1}[u_{t_1}].$$
  Since $u_{t_1}=u_{t_2}$ on $\partial\Omega$, the comparison principle yields $u_{t_2}<u_{t_1}$ in $\Omega$. This proves that the solutions $u_{t_i}$ are ordered in decreasing sense according the parameter $t$. It turns out that  $u_1\leq u_t<v^0$ for all $t$, where $u_1$ is the solution of (\ref{eqs})-(\ref{eqsb}). According to (\ref{eh2}), we have $C_1\leq  u_t \leq \sup_\Omega u_0\leq \max_{\partial\Omega}\varphi$ and we conclude  
\begin{equation}\label{ut}
 \| u_t\|_{0;\overline{\Omega}}\leq C_3,\quad C_3=\max\{|C_1|,\|\varphi\|_{0;\partial\Omega}\}.
\end{equation}

 In order to find  the desired gradient estimates for the solution $u_t$, by Proposition \ref{pr-31}, we have to find estimates of $|Du_t|$ along $\partial\Omega$. On the other hand, the same computations  given in   Proposition \ref{pr42}     conclude that $\sup_{\partial\Omega}|Du_t|$ is bounded by a constant depending on   $\Omega$, $\varphi$ and  $\|u_t\|_{0;\overline{\Omega}}$. However, and by using (\ref{ut}),  the value  $\|u_t\|_{0;\overline{\Omega}}$ is bounded by $C_3$, which depends only on   $\varphi$ and $\Omega$.   

\end{enumerate}
Until here, we have proved   the part of existence in Theorem \ref{t1}. The uniqueness is a consequence  of Proposition \ref{pr-31} and this completes the proof of theorem if $\varphi\in C^2(\partial\Omega)$.
 
 Finally we suppose $\varphi\in C^0(\partial\Omega)$.  Let $\{\varphi_k^{+}\}, \{\varphi_k^{-}\}\in C^{2,\alpha}(\partial\Omega)$ be a monotonic sequence of functions converging from above and from below to $\varphi$ in the $C^0$ norm. By virtue of the first part of this proof,   there are solutions $u_k^{+}, u_k^{-}\in C^{2,\alpha}(\overline{\Omega})$ of the translating soliton equation (\ref{eqs}) such that ${u_k^{+}}_{|\partial\Omega}=\varphi_k^{+}$ and ${u_k^{-}}_{|\partial\Omega}=\varphi_k^{-}$.  By the comparison principle, we find
$$u_1^{-}\leq\ldots \leq u_k^{-}\leq u_{k+1}^{-}\leq\ldots \leq u_{k+1}^{+}\leq u_k^+\leq\ldots\leq u_1^{+}$$
for every $k$, hence the sequences $\{u_k^{\pm}\}$ are uniformly bounded in the $C^0$ norm. By the proof of Theorem \ref{t-ex}, the sequences $\{u_k^{\pm}\}$ have a priori $C^1$ estimates depending only on  $\Omega$ and $\varphi$.    Using classical Schauder theory again (\cite[Th. 6.6]{gt}),  the sequence $\{u_k^{\pm}\}$ contains a subsequence  $\{v_k\}\in C^{2,\alpha}(\overline{\Omega})$ converging uniformly on the $C^2$ norm on compacts subsets of $\Omega$ to a solution $u\in C^2(\Omega)$ of (\ref{eqs}). Since $\{{u_k^{\pm}}_{|\partial\Omega}\}=\{\varphi_k^{\pm}\}$    and $\{\varphi_k^{\pm}\}$  converge to $\varphi$, we conclude  that $u$ extends continuously to $\overline{\Omega}$ and $u_{|\partial\Omega}=\varphi$.

\end{proof}

%%%%%%%%%%%%%%%%%%%%%%%%
\section{The Dirichlet problem for the constant weighted mean curvature}\label{sec5}
%%%%%%%%%%%%%%%%%%%%%%%%%%%%%%%%%%%%%
In this section we solve the Dirichlet problem for the case that $H_\phi$ is constant in (\ref{hf}):
\begin{eqnarray}
&&\mbox{div} \frac{Du}{\sqrt{1+|Du|^2}} = \frac{1}{\sqrt{1+|Du|^2}}+\mu\quad \mbox{in $\Omega$}\label{eqm1}\\
&&u=\varphi\quad \mbox{on $\partial\Omega,$}\label{eqm2}
\end{eqnarray}
where $\mu$ is a constant and $u\in C^2(\Omega)\cap C^0(\overline{\Omega})$. The motivation of this problem is twofold. First, equation (\ref{eqm1}) is the analogous to the constant mean curvature equation in Euclidean space in the context of manifolds with density, whereas (\ref{eqs}) corresponds with the minimal surface equation. Second, the solvability of the constant mean curvature equation holds   for any $\varphi$ if $\kappa\geq 2|H|\geq 0$ (\cite{se}) and the next result for (\ref{eqm1})-(\ref{eqm2}) establishes a similar result.

\begin{theorem}\label{t-exm}Let $\Omega\subset\r^2$ be a  bounded   $C^{2,\alpha}$-domain with inward curvature $\kappa$. If $\kappa\geq \mu\geq 0$ and $\varphi\in C^0(\partial\Omega)$, then there is a unique solution of  (\ref{eqm1})-(\ref{eqm2}).
\end{theorem}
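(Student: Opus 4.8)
The plan is to mimic the proof of Theorem~\ref{t-ex} closely, since equation (\ref{eqm1}) differs from (\ref{eqs}) only by the additive constant $\mu$; the corresponding operator is
$$Q_\mu[u]=(1+|Du|^2)\Delta u-u_iu_ju_{i;j}-(1+\mu\sqrt{1+|Du|^2})(1+|Du|^2)^{1/2}\cdot(1+|Du|^2)^{1/2},$$
or more conveniently $Q_\mu[u]=a_{ij}u_{i;j}-(1+|Du|^2)-\mu(1+|Du|^2)^{3/2}$ with $a_{ij}$ as in (\ref{eq44}). One checks that the difference of two solutions still satisfies a linear elliptic equation, so the comparison and touching principles hold verbatim and give uniqueness. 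The method of continuity will be applied with the family $Q_{\mu,t}[u]=a_{ij}u_{i;j}-t\bigl((1+|Du|^2)+\mu(1+|Du|^2)^{3/2}\bigr)$, $t\in[0,1]$, whose $t=0$ member is the minimal surface equation; openness of $\mathcal{A}$ follows from the Implicit Function Theorem exactly as before, since the linearization has the same principal part. So, as in Section~\ref{sec4}, everything reduces to a priori $C^0$ and boundary $C^1$ estimates.

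For the $C^0$ estimate, the upper bound $u\le\max_{\partial\Omega}\varphi$ follows from comparison with constant functions (a constant $v$ has $Q_\mu[v]=-\mu\le 0$, so $Q_\mu[v]\le 0=Q_\mu[u]$); more precisely one should argue as in item~(3) of Proposition~\ref{pr-s1} using that $\mu\ge 0$. For the lower bound I would replace the bowl soliton by a rotationally symmetric subsolution adapted to the constant $\mu$: since $\kappa\ge\mu$, a sphere-like or winglike comparison surface (a radial solution of the ODE for constant $H_\phi=\mu/2$) exists and can be slid up from below, or — perhaps more cleanly — one builds an explicit radial barrier on a large disc $D_R\supset\overline\Omega$ solving (\ref{eqm1}) with zero boundary data, whose existence follows from a fixed-point argument analogous to Proposition~\ref{pr-exi}. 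Either way one obtains $C_1=C_1(\varphi,\Omega,\mu)$ with $C_1\le u$.

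The boundary gradient estimate is where the condition $\kappa\ge\mu$ enters essentially, and this is the main technical point. I would use the same barrier $w=-h\circ d+\varphi$ with $h(t)=a\log(1+bt)$ in a tubular neighborhood $\mathcal N_\epsilon$, and recompute $Q_\mu[w]$: the only new term compared to (\ref{q1})–(\ref{q4}) is $-\mu(1+|Dw|^2)^{3/2}$. Since $1+|Dw|^2\le 1+C(h')^2$ for a controlled constant, this extra term is dominated by $-\mu C'(h')^3$, while the good term $-h''(a_{ij}d_id_j)$ is only of order $(h')^2$; however, the term $-h'a_{ij}d_{i;j}$, which is $\ge -h'(1+|Dw|^2-|D\varphi|^2)\Delta d \ge h'(1+|Dw|^2-|D\varphi|^2)\kappa$ when we no longer discard $\Delta d\le-\kappa$, supplies a positive contribution of order $\mu(h')^3$ precisely when $\kappa\ge\mu$, which absorbs the bad cubic term. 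So the key estimate is
$$Q_\mu[w]\ \ge\ -h''-\beta(1+|Dw|^2)+(\kappa-\mu)h'(1+|Dw|^2)+(\text{lower order}),$$
and choosing $b$ large makes the right-hand side positive on $\mathcal N_\epsilon$; the boundary inequality $w\le u$ on $\partial\mathcal N_\epsilon$ is verified exactly as in Proposition~\ref{pr42}. The hardest part is bookkeeping the cubic-in-$h'$ terms to confirm that $\kappa\ge\mu$ is exactly the threshold at which the curvature term compensates the $\mu$-term; once that inequality is in place, the remainder of the continuity argument (closedness via Schauder estimates and the ordering $u_1\le u_t<v^0$) and the passage from $\varphi\in C^{2,\alpha}$ to $\varphi\in C^0$ by monotone approximation go through verbatim as in the proof of Theorem~\ref{t-ex}.
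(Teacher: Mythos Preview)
Your proposal is correct and follows essentially the same approach as the paper: radial solutions of (\ref{eqm1}) furnish the $C^0$ lower barrier, and the same logarithmic barrier $w=-h\circ d+\varphi$ gives the boundary gradient estimate, with the key cancellation being exactly the one you identify---the paper writes it as $-h'(1+|Dw|^2-|D\varphi|^2)(\Delta d+\mu)\ge 0$ since $\Delta d\le -\kappa\le -\mu$, which is your absorption of the cubic term rephrased. The only substantive difference is the choice of continuity family: the paper takes $Q_t[u]=a_{ij}u_{i;j}-(1+|Du|^2)-t\mu(1+|Du|^2)^{3/2}$, so that $t=0$ is the translating soliton equation already solved in Theorem~\ref{t-ex}, whereas you start from the minimal surface equation; both work.
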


Notice that the solvability of this Dirichlet problem was proved in \cite{jl} in a context more general where the domain may be not bounded. As a difference, the present proof    uses a comparison argument with rotational surfaces in order to obtain the   $C^0$ estimates. The uniqueness is again a consequence of the maximum principle for the equation (\ref{eqm1}). After  Theorem \ref{t-ex}, we   assume that $\mu>0$. The proof now differs only in minor details from than of the preceding section, which  are left to the reader.  We point out that   the assumption $\mu>0$ will be use strongly.

  \begin{lemma} \label{le1}
 If  $\Omega\subset\r^2$ is  a bounded domain with $\mbox{diam}(\Omega)<1/\mu$, then there is a constant $C_1=C_1(\varphi, \Omega)$ such that if    $u$ is a solution of (\ref{eqm1})-(\ref{eqm2}), then
\begin{equation}\label{eh2}
C_1\leq u\leq \max_{\partial\Omega}\varphi\quad \mbox{in $\Omega$}.
\end{equation}
 \end{lemma}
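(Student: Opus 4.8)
The plan is to establish the two halves of (\ref{eh2}) separately: the upper bound by comparison with a constant, and the lower bound by an Alexandrov-type sliding argument against a rotationally symmetric solution of (\ref{eqm1}), exactly as in the lower estimate in item (2) of Proposition \ref{pr-31}, with the bowl soliton replaced by the ``$\mu$-bowl''. For the upper bound, write (\ref{eqm1}) as $\mathcal{Q}[u]=0$ with $\mathcal{Q}[w]=\mbox{div}\big(Dw/\sqrt{1+|Dw|^2}\big)-1/\sqrt{1+|Dw|^2}-\mu$. The constant $m:=\max_{\partial\Omega}\varphi$ satisfies $\mathcal{Q}[m]=-1-\mu<0=\mathcal{Q}[u]$ and $u=\varphi\le m$ on $\partial\Omega$, so the comparison principle for (\ref{eqm1}) (valid for the same reasons as for (\ref{eqs}); see \cite[Th.~10.1]{gt}) gives $u\le m$ in $\Omega$. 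Equivalently one may slide a horizontal plane, which is a strict supersolution of (\ref{eqm1}), down onto $\Sigma_u$ as in item (3) of Proposition \ref{pr-s1}.

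For the lower bound I would first construct, arguing exactly as in Proposition \ref{pr-exi}, the rotationally symmetric solution $\mathbf{b}$ of (\ref{eqm1}) with $\mathbf{b}(0)=\mathbf{b}'(0)=0$: it solves $\big(r\mathbf{b}'/\sqrt{1+\mathbf{b}'^2}\big)'=r/\sqrt{1+\mathbf{b}'^2}+\mu r$, is $C^2$ up to $r=0$ with $\mathbf{b}''(0)=(1+\mu)/2>0$, and is convex and strictly increasing on its maximal interval $[0,R^{*})$ of existence as a graph. The step where the diameter hypothesis enters is the estimate $R^{*}\ge 1/\mu$ (nothing to prove if $R^{*}=\infty$). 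To see it, set $\sigma(r)=r\mathbf{b}'(r)/\sqrt{1+\mathbf{b}'(r)^2}=r\sin\psi$ with $\psi=\arctan\mathbf{b}'$; then $0\le\sigma<r$ on $[0,R^{*})$ and, by (\ref{eqm1}), $\sigma'=\sqrt{r^2-\sigma^2}+\mu r$. If $r_1$ is the first value with $\sigma(r_1)=r_1$, the function $r-\sigma(r)$ is nonnegative on $[0,r_1]$ and vanishes at the right endpoint, so $1-\sigma'(r_1)\le 0$; but $\sigma'(r_1)=\sqrt{r_1^2-\sigma(r_1)^2}+\mu r_1=\mu r_1$, hence $r_1\ge 1/\mu$, i.e.\ $R^{*}\ge 1/\mu$.

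Now pick $x_0\in\Omega$ and $R$ with $\mbox{diam}(\Omega)<R<1/\mu\le R^{*}$, so that $\overline{\Omega}\subset D_R(x_0):=\{|x-x_0|<R\}$. Let $\mathbf{b}_R$ be the restriction to $D_R(x_0)$ of $x\mapsto\mathbf{b}(|x-x_0|)$, translated vertically so that $\mathbf{b}_R\equiv 0$ on $\partial D_R(x_0)$; then $\mathbf{b}_R\le 0$ on $D_R(x_0)$, its minimum $\mathbf{b}_R(x_0)$ is attained at the center, and (adding a constant preserves solutions of (\ref{eqm1})) $\mathbf{b}_R$ solves (\ref{eqm1}) on $D_R(x_0)$. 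I would then slide the graph of $\mathbf{b}_R$ vertically: starting far below $\Sigma_u$ (possible since $u\in C^0(\overline{\Omega})$ is bounded), move it up by $s\vec{a}$ until the largest $s=s_1$ for which $\mathbf{b}_R+s_1\le u$ on $\overline{\Omega}$, with equality at some $x_c\in\overline{\Omega}$. If $x_c\in\Omega$, the tangency principle (Remark \ref{rem2}, applied to the two solutions $\mathbf{b}_R+s_1\le u$ of (\ref{eqm1}), tangent at $x_c$ with the same orientation) forces $u\equiv\mathbf{b}_R+s_1$ on $\Omega$, whence $s_1=\varphi-\mathbf{b}_R$ on $\partial\Omega$ and $s_1\ge\min_{\partial\Omega}\varphi$; if $x_c\in\partial\Omega$ then directly $s_1=\varphi(x_c)-\mathbf{b}_R(x_c)\ge\min_{\partial\Omega}\varphi$ because $\mathbf{b}_R\le 0$. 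In either case $u\ge\mathbf{b}_R+s_1\ge\mathbf{b}_R(x_0)+\min_{\partial\Omega}\varphi=:C_1$ on $\overline{\Omega}$, a constant depending only on $\mu$ and $\mbox{diam}(\Omega)$ (through $\mathbf{b}_R(x_0)$) and on $\varphi$.

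The one genuinely delicate point is the inequality $R^{*}\ge 1/\mu$, i.e.\ that the rotational solution of (\ref{eqm1}) remains a graph over a disk large enough to contain $\Omega$ precisely when $\mbox{diam}(\Omega)<1/\mu$; this is the only place where the quantitative form of the hypothesis is used. Everything else is the routine adaptation of the material in Sections \ref{sec3}--\ref{sec4} that the author already announced would be left to the reader.
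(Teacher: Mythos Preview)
Your proof is correct and follows the same approach as the paper: the upper bound via the maximum principle (the paper simply observes that the right-hand side of (\ref{eqm1}) is nonnegative), and the lower bound by sliding a rotational solution of (\ref{eqm1}) from below exactly as in Proposition~\ref{pr-31}, with $C_1=\mathbf{b}_R(x_0)+\min_{\partial\Omega}\varphi$. The only minor difference is that the paper cites \cite{lo1} for the fact that the radial $\mu$-bowl is a graph over a disk of radius at least $1/\mu$, whereas you supply a short self-contained ODE argument for $R^{*}\ge 1/\mu$; otherwise the two arguments coincide.
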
 
   
   \begin{proof} Using $\mu>0$,   the right-hand side of (\ref{eqm1}) is non-negative,  the maximum principle implies $\sup_\Omega u=\max_{\partial\Omega}u=\max_{\partial\Omega}\varphi$, proving the inequality in the right-hand side of (\ref{eh2}). The lower estimate for $u$ in (\ref{eh2})  is obtained by using  radial solutions of (\ref{eqm1}). It was proved in \cite{lo1} that if $\mu>0$, any radial solution   intersecting the rotational axis (and necessarily perpendicularly) converges to a right circular cylinder of radius $1/\mu$. More exactly, let $D_r\subset\r^2$ be a disc centered at the origin of radius $r$. If $\mu>1/2$, there is a radial solution of (\ref{eqm1}) on  $D_r$ for some   $r_0>1/\mu$ and if $0<\mu\leq 1/2$, there is a radial solution of (\ref{eqm1}) on  $D_r$ for any $r<1/\mu$.  
  
Since  $\mbox{diam}(\Omega)<1/\mu$, let $r>0$ such that $\mbox{diam}(\Omega)<r<1/\mu$ and denote by $v$ the radial solution of (\ref{eqm1})  on $D_r$ with $v=0$ on $\partial D_r$. After a horizontal translation if necessary, we suppose $\Omega\subset  D_r$. Now the argument works the same as in Proposition \ref{pr-31}  with the graph $\Sigma_v$, where now $C_1=v(0)+\min_{\partial\Omega}\varphi$.
    \end{proof}

 \begin{lemma}[Interior gradient estimates] \label{le2} If $u$ is a solution of (\ref{eqm1})-(\ref{eqm2}), then 
  $$\sup_{ \Omega}|Du|=\max_{\partial\Omega}|Du|.$$
 \end{lemma}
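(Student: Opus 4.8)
The plan is to mimic the argument used for the translating soliton equation in item (3) of Proposition \ref{pr-31}, since equation (\ref{eqm1}) differs from (\ref{eqs}) only by the additive constant $\mu$, which disappears upon differentiation. First I would rewrite (\ref{eqm1}) in the quasilinear form $Q_\mu[u]=0$, where
$$Q_\mu[u]=(1+|Du|^2)\Delta u-u_iu_ju_{i;j}-(1+|Du|^2)-\mu\,(1+|Du|^2)^{3/2},$$
so that $Q_\mu[u]=a_{ij}u_{i;j}-\big[(1+|Du|^2)+\mu(1+|Du|^2)^{3/2}\big]$ with the same ellipticity matrix $a_{ij}=(1+|Du|^2)\delta_{ij}-u_iu_j$ as before. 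The key observation is that the right-hand side depends on $Du$ only (not on $u$ itself), exactly as in (\ref{eqs}), so the differentiation trick still applies.

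Next I would set $v^k=u_k=\partial u/\partial x_k$ for $k=1,2$, and differentiate the equation $Q_\mu[u]=0$ with respect to $x_k$. The terms coming from the principal part reproduce an operator of the form $a_{ij}(Du)\,v^k_{i;j}$ together with first-order terms in $v^k$ with bounded (continuous) coefficients built from $Du$ and $D^2u$, just as in (\ref{eq3}); the extra $\mu$-term $-\mu(1+|Du|^2)^{3/2}$ differentiates to $-3\mu(1+|Du|^2)^{1/2}u_iu_{i;k}$, which is again linear in $v^k=u_k$ and its derivatives with locally bounded coefficients, hence contributes only to the lower-order part of the resulting linear equation. Therefore each $v^k$ satisfies a homogeneous linear elliptic equation (with no zeroth-order term), and by the maximum principle $|v^k|$ cannot attain an interior maximum on $\Omega$ unless it is constant. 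Consequently the maximum of $|Du|^2=\sum_k (v^k)^2$ over the compact set $\overline{\Omega}$ is attained on $\partial\Omega$, which is the assertion.

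I do not expect a genuine obstacle here: the only point requiring mild care is checking that the $\mu$-term, after differentiation, produces no zeroth-order coefficient and that its first-order coefficients are continuous on $\Omega$ (which holds since $u\in C^2(\Omega)$ ensures $Du\in C^1$, and the factor $(1+|Du|^2)^{1/2}$ is smooth in $Du$). Given this, the argument is word-for-word that of item (3) of Proposition \ref{pr-31}, and one may simply write: "The proof is identical to that of item (3) of Proposition \ref{pr-31}, replacing $Q$ by $Q_\mu$; the additional term $-\mu(1+|Du|^2)^{3/2}$ contributes, after differentiation in $x_k$, only first-order terms in $u_k$ with continuous coefficients, so each $u_k$ still satisfies a linear elliptic equation without zeroth-order term, whence $|Du|$ attains its maximum on $\partial\Omega$."
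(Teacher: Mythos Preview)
Your proposal is correct and follows essentially the same route as the paper. The paper's own proof consists of writing down the analogue of equation (\ref{eq3}) for (\ref{eqm1}), namely the linear elliptic equation satisfied by $v^k=u_k$ with the extra first-order coefficient $-\tfrac{3}{2}\mu(1+|Du|^2)^{1/2}u_i$ coming from differentiating $-\mu(1+|Du|^2)^{3/2}$, and then stating that ``the arguments are similar''; this is exactly what you outline.
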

 \begin{proof}
Now the corresponding equation (\ref{eq3}) for (\ref{eqm1}) is
\begin{equation}\label{eqv}
\left( \left( 1+| \nabla v| ^{2}\right) \delta
_{ij}-v_{i}v_{j}\right) z_{i;j}^{k}+2\left( v_{i}\Delta v-v_{i;j}u_{i}-v_i-\frac32\mu(1+|\nabla v|^2)\right) z_{i}^{k}=0,
\end{equation}
and the arguments are similar.
\end{proof}

%%%%%%% 

      \begin{lemma}[Boundary gradient estimates]  \label{le3}
   Let $\Omega\subset\r^2$ be a  bounded  domain with $C^2$-boundary, $\kappa\geq \mu> 0$ and let $\varphi\in C^2(\partial\Omega)$. If  $u\in C^2(\Omega)\cap C^1(\overline{\Omega})$ is a   solution of (\ref{eqm1})-(\ref{eqm2}), then there is a constant $C_2=C_2(\Omega, C_1,\|\varphi\|_{2})$ such that 
   $$\max_{\partial\Omega}|Du|\leq C_2.$$
    
\end{lemma}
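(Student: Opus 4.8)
The plan is to mimic almost verbatim the proof of Proposition \ref{pr42}, since the only change in going from the translating soliton equation (\ref{eqs}) to the constant weighted mean curvature equation (\ref{eqm1}) is the appearance of the extra term $\mu(1+|Du|^2)^{1/2}$ on the right-hand side, which in the operator form amounts to replacing $Q[u]=a_{ij}u_{i;j}-(1+|Du|^2)$ by $\widetilde{Q}[u]=a_{ij}u_{i;j}-(1+|Du|^2)-\mu(1+|Du|^2)^{3/2}$. The upper barrier will again be taken to be the solution $v^0$ of the minimal surface equation with boundary data $\varphi$; since $\widetilde{Q}[v^0]=-\mu(1+|Dv^0|^2)^{3/2}<0=\widetilde{Q}[u]$ and $v^0=u$ on $\partial\Omega$, the comparison principle gives $u<v^0$ in $\Omega$, hence an upper bound for $|Du|$ on $\partial\Omega$. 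Actually, here it is slightly cleaner to use instead the barrier $w=h\circ d+\varphi$ from above as mentioned in the Remark following Proposition \ref{pr42}, because the minimal surface solution does not feel $\mu$; but either works.

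For the lower barrier I would again set $w=-h\circ d+\varphi$ on the tubular neighbourhood $\mathcal{N}_\epsilon$, with $h(t)=a\log(1+bt)$, $a=c/\log(1+b)$, $c$ chosen as in (\ref{cc}) with $C_1$ now the constant of Lemma \ref{le1}, and $b$ large to be fixed. The computation of $\widetilde{Q}[w]$ reproduces (\ref{q1}) with the additional term $-\mu(1+|Dw|^2)^{3/2}$. All the estimates (\ref{qa}), (\ref{qb}), (\ref{qc}) and the inequalities $a_{ij}d_{i;j}\le 0$, $h''(a_{ij}d_id_j)\le h''$, $a_{ij}\varphi_{i;j}\ge -\sqrt2\,|D^2\varphi|(1+|Dw|^2)$ go through unchanged, since they only used the structure of $a_{ij}$, the properties of $d$, and the convexity hypothesis $\kappa\ge 0$ (which holds a fortiori from $\kappa\ge\mu\ge 0$). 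One arrives at
\begin{equation*}
\widetilde{Q}[w]\ \ge\ \frac{h'^2}{a}-\beta(1+|Dw|^2)-\mu(1+|Dw|^2)^{3/2},
\end{equation*}
with $\beta=1+\sqrt2\,\|D^2\varphi\|_0$. Using (\ref{qc}), $|Dw|^2=h'^2+|D\varphi|^2-2h'\langle Dd,D\varphi\rangle\le h'^2+\|D\varphi\|_0^2+2h'\|D\varphi\|_0$, so $1+|Dw|^2\le C(1+h'^2)$ with $C$ depending only on $\|D\varphi\|_0$; hence $(1+|Dw|^2)^{3/2}\le C'(1+h'^3)$. Since $h'=ab/(1+bt)$ and $h'^2/a=h'^2\log(1+b)/c$, the quadratic-in-$h'$ gain term dominates the linear and constant terms exactly as before, and it remains to control the new cubic term $\mu C' h'^3$. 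This is the one genuinely new point, and it is where the hypothesis $\kappa\ge\mu$ (as opposed to merely $\kappa\ge 0$) must be brought in: rather than throwing away $\Delta d$, I would keep the term $-h'(1+|Dw|^2-|D\varphi|^2)\Delta d$ and use $\Delta d\le -\kappa\le -\mu<0$ together with $1+|Dw|^2-|D\varphi|^2\ge 1+h'^2-2h'\|D\varphi\|_0\ge c_0 h'^2$ (for $b$, hence $h'$, large) to produce a term $\ge \mu c_0 h'^3$, which absorbs $\mu C' h'^3$ provided $c_0$ is large enough — and $c_0\to 1$ as $h'\to\infty$, so for $b$ large this is automatic after comparing the coefficient against $C'$; if $C'>1$ one first enlarges $c$ in (\ref{cc}) so that near $t=0$ the factor $1+|Dw|^2-|D\varphi|^2$ already exceeds $C'\,h'^2$. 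After this balancing, the leading behaviour as $b\to\infty$ is again governed by the positive coefficient $(1/a-\beta)h'^2$, so for $b$ sufficiently large $\widetilde{Q}[w]>0$ throughout $\mathcal{N}_\epsilon$; the boundary inequality $w\le u$ on $\partial\mathcal{N}_\epsilon$ is verified exactly as in Proposition \ref{pr42} (equality on $\partial\mathcal{N}_\epsilon\cap\partial\Omega$, and $w\le C_1\le u$ on $\partial\mathcal{N}_\epsilon\setminus\partial\Omega$ using (\ref{cc}) and $\log(1+b\epsilon)/\log(1+b)\ge 1/2$). The comparison principle then gives $w\le u$ in $\mathcal{N}_\epsilon$, and together with the upper barrier this yields $\max_{\partial\Omega}|Du|\le C_2:=\max\{\|Dw\|_{0;\partial\Omega},\|Dv^0\|_{0;\partial\Omega}\}$, with $C_2$ depending only on $\Omega$, $C_1$ and $\|\varphi\|_2$.

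The main obstacle, as indicated, is exclusively the new cubic term $\mu(1+|Dw|^2)^{3/2}$ in $\widetilde{Q}[w]$: in the translating-soliton case the right-hand side grew only quadratically in $|Du|$, so the $h'^2/a$ gain sufficed, whereas now one needs either to retain the favourable sign of $h'\,\Delta d$ via $\kappa\ge\mu$, or — as in Lemma \ref{le1} — to exploit $\mathrm{diam}(\Omega)$ being small; I would present the argument in the form that uses $\kappa\ge\mu$, since that is the hypothesis of the Lemma. Apart from this one estimate the proof is, as the author says, identical in structure to that of Proposition \ref{pr42}, so I would state the barriers, point out the single modification in the operator, carry out the $h'^3$-absorption step in detail, and refer to Proposition \ref{pr42} for the rest.
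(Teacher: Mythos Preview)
Your overall strategy matches the paper's: use $v^0$ as an upper barrier and $w=-h(d)+\varphi$ as a lower barrier, and identify that the only new feature is the cubic term $\mu(1+|Dw|^2)^{3/2}$, which must be absorbed via the retained term $-h'(1+|Dw|^2-|D\varphi|^2)\Delta d$ together with the hypothesis $\kappa\ge\mu$. That is exactly what the paper does.

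However, your execution of the cubic absorption has a gap. You bound the loss by $\mu(1+|Dw|^2)^{3/2}\le \mu C'(1+h'^3)$ and the gain by $\mu h'(1+|Dw|^2-|D\varphi|^2)\ge \mu c_0 h'^3$, and then assert that $c_0\ge C'$ for $b$ large. But for every finite $b$ one has $c_0<1<C'$: the gain satisfies $1+h'^2-2h'\langle Dd,D\varphi\rangle\le h'^2+O(h')$, while the loss satisfies $(1+|Dw|^2)^{3/2}\ge h'^3$. Both constants tend to $1$ as $h'\to\infty$, so the cubic terms cancel only to leading order; the true residual is of order $h'^2$, and it is \emph{this} that must be absorbed by the $(1/a-\beta)h'^2$ gain. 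Your remark about ``enlarging $c$'' does not help, since it pushes $c_0$ and $C'$ toward $1$ simultaneously without ever reversing the inequality.

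The paper sidesteps this constant-matching issue by a sharper splitting. From $(1+|Dw|^2)^{1/2}\le 1+|Dw|\le 1+h'+|D\varphi|$ one gets
\[
-\mu(1+|Dw|^2)^{3/2}\ \ge\ -\mu h'(1+|Dw|^2)-\mu(1+|D\varphi|)(1+|Dw|^2).
\]
Writing $-\mu h'(1+|Dw|^2)=-\mu h'(1+|Dw|^2-|D\varphi|^2)-\mu h'|D\varphi|^2$ and combining the first piece with $-h'(1+|Dw|^2-|D\varphi|^2)\Delta d$ produces the single term $-h'(1+|Dw|^2-|D\varphi|^2)(\Delta d+\mu)$, which is nonnegative precisely because $\Delta d\le-\kappa\le-\mu$. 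All remaining pieces are at most quadratic in $h'$: one redefines $\beta=\mu(1+\|D\varphi\|_0)+1+\sqrt2\|D^2\varphi\|_0$ and picks up an extra linear term $-\mu h'\|D\varphi\|_0^2$, after which the computation of Proposition~\ref{pr42} closes verbatim. You should replace your $c_0$ versus $C'$ comparison with this exact pairing of the factor $(1+|Dw|^2-|D\varphi|^2)$.
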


 \begin{proof} 
 We consider the operator 
  \begin{equation}\label{eq444}
 Q[u]=a_{ij}u_{i;j}-(1+|Du|^2)-\mu(1+|Du|^2)^{3/2}.
 \end{equation} 
 The minimal solution $v^0$ is   an upper barrier for $u$. For the lower barrier for $u$, we use again   the   function $w=-h\circ d+\varphi$. Now 
\begin{equation}\label{qq1}
Q[w]=-h''a_{ij}d_id_j-h'a_{ij}d_{i;j}+a_{ij}\varphi_{i;j}-(1+|Dw|^2)-\mu(1+|Dw|^2)^{3/2}.
\end{equation}
Taking into account $(1+|Dw|^2)^{1/2}\leq 1+|Dw|$ and $|Dw|^2\leq h'^2+|D\varphi|^2+2h'|D\varphi|\leq (h'+|D\varphi|)^2$, we deduce from (\ref{qq1})
\begin{eqnarray*}
Q[w]&\geq& -h''-h'(1+|Dw|^2-|D\varphi|^2)\Delta d-(1+\sqrt{2}|D^2 \varphi|)(1+|Dw|^2)-\mu(1+|Dw|^2)^{3/2}\\
&\geq &-h''-h'(1+|Dw|^2-|D\varphi|^2)\Delta d-(1+\sqrt{2}|D^2 \varphi|)(1+|Dw|^2)-\mu(1+|Dw|^2)(1+|Dw|) \\
&\geq &-h''-h'(1+|Dw|^2-|D\varphi|^2)(\Delta d+\mu)-\mu h'|D\varphi|^2\\
&-&(\mu(1+|D\varphi|)+1+\sqrt{2}|D^2 \varphi|)(1+|Dw|^2).
\end{eqnarray*}
Let $\beta=\mu(1+\|D\varphi\|_{0})+1+\sqrt{2}\| D^2\varphi\|_{0}$. Since $\Delta d+\mu\leq -\kappa+\mu\leq 0$, it follows  
\begin{eqnarray*} 
Q[w]&\geq& \frac{h'^2}{a}-\beta(1+|Dw|^2)-\mu h'|D\varphi|^2\\
&\geq&\left(\frac{1}{a}-\beta\right)h'^2-h'(2\beta\| D\varphi\|_{0}+\mu\|D\varphi\|^2_{0})-\beta(1+\| D\varphi\|_{0}^2).\end{eqnarray*}  

The rest of the proof runs   as in Proposition \ref{pr42}.
    \end{proof}
     
  With the help of the preceding three lemmas we can now prove Theorem \ref{t-exm}.
\begin{proof}[of Theorem  \ref{t-exm}]
 %%%%%%%%%%%%%%%%%%%%%%%
 
For the method of continuity, let
   $$Q_t[u]= (1+|Du|^2)\Delta u-u_iu_ju_{i;j}- (1+|Du|^2)-t\mu (1+|Du|^2)^{3/2},$$
and 
$$\mathcal{A}=\{t\in [0,1]: \exists u_t\in C^{2,\alpha}(\overline{\Omega}),  Q_t[u_t]=0, {u_t}_{|\partial\Omega}=\varphi\}.$$ 
 The set  $\mathcal{A}$ is not empty  because    the   solution of Theorem \ref{t-ex} corresponds with the value $t=0$. For the openness of   $\mathcal{A}$,    the  computation of $L$ leads to 
$$Lv=(DQ_t)_uv=a_{ij}(Du)v_{i;j}+\mathcal{B}_i(Du,D^2u)v_i,$$
where  $\mathcal{B}_i=2(u_i \Delta u-u_ju_{i;j}-u_i-3t(1+|Du|^2)/2)$. Then the proof works again. 

Finally, we show  that the set $\mathcal{A}$ is closed in $[0,1]$. For the height and gradient estimates for $u_t$, we use lemmas \ref{le1}, \ref{le2} and \ref{le3}. The arguments are similar once we prove  that the solutions $u_{t_i}$ are ordered in decreasing sense. If $t_1<t_2$, then $Q_{t_1}[u_{t_1}]=0$ and 
$$Q_{t_1}[u_{t_2}]=(t_2-t_1) \mu(1+|Du_{t_2}|^2)^{3/2}>0=Q_{t_1}[u_{t_1}].$$
 Since $u_{t_1}=u_{t_2}$ on $\partial\Omega$, the comparison principle yields $u_{t_2}<u_{t_1}$ in $\Omega$.  
\end{proof}

%%%%%%%%%%%%%%%%%%%%%%%%%%%%%%%%%%%%%
\section{The Dirichlet problem in unbounded domains}\label{sec6}
%%%%%%%%%%%%%%%%%%%%%%%%%%%%%%%%%%%%%%%%

We study in this section the Dirichlet problem (\ref{eqs})-(\ref{eqsb}) in unbounded convex  domains contained   in a strip. We have two cases depending if the domain is or is not a strip. 

The first result assumes that $\Omega$ is a strip. In such a case, the motivation comes from   the grim reapers that appeared in (\ref{ws}). For each $\theta$, the surface $\Sigma_{w_\theta}$ is a graph defined in the (maximal) strip $\Omega^\theta$, with $w_\theta(x,y)\rightarrow +\infty$ as $|y|\rightarrow \pi/(2\cos\theta)$. If we narrow the strip to   $|y|<b$, with $0<b<\pi/(2\cos\theta)$, then the value of $w_\theta$ on $|y|=b$ is the linear function  $x\mapsto \varphi(x,\pm b)=w_\theta(x,b)$ and 
$\partial \Sigma_{w_\theta}$   is formed  by two parallel straight lines.

Our purpose is to consider the Dirichlet problem  when $\Omega$ is a strip and   $\varphi$ is formed by two copies of a convex  function.  Let   $\Omega_m=\{(x,y)\in\r^2:-m<y<m\}$, $m>0$, be the strip of width $2m$. For each  smooth convex function  $f$  defined in $\r$,    we extend $f$ to a function $\varphi_f$ on $\partial\Omega_m$  by  $\varphi_f(x,\pm m)=f(x)$. The result of existence is established by our next theorem (\cite{lo4}).

\begin{theorem}\label{t-strip}   If $m<\pi/2$, then  for each convex function $f$, there is a   solution of (\ref{eqs})-(\ref{eqsb}) for boundary values $\varphi_f$ on $\partial\Omega_m$.
\end{theorem}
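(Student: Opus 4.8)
The natural strategy is to solve the problem on the bounded truncated strips $\Omega_m \cap \{|x| < R\}$, obtain uniform interior $C^1$ estimates independent of $R$, and pass to the limit $R \to \infty$. First I would fix a large $R > 0$, set $\Omega_{m,R} = \Omega_m \cap \{|x| < R\}$, and solve the Dirichlet problem (\ref{eqs}) on $\Omega_{m,R}$ with boundary data equal to $\varphi_f$ on the two horizontal segments $y = \pm m$ and equal to $f(\pm R)$ (or any reasonable interpolation) on the vertical segments $x = \pm R$. Since $m < \pi/2$ the domain $\Omega_{m,R}$ is a bounded convex $C^{0,1}$-domain (or a smoothed version of it), so Corollary \ref{c1} / Theorem \ref{t-ex} applies to give a unique solution $u_R$, after either rounding the corners or invoking the version of the solvability result for convex domains with nonsmooth boundary; the corners can be handled by a further approximation argument. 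The uniqueness and comparison principle (Proposition, \enquote{Comparison principle}) will then be the main tool.

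The heart of the matter is a uniform height and gradient bound on $u_R$ that does not depend on $R$. For the upper height bound I would use a translated grim reaper $w_\theta$ with $\theta = 0$: since $m < \pi/2$, the strip $\Omega_m$ is strictly contained in $\Omega^0$, so $w_0(x,y) = -\log(\cos y) + a$ is finite on $\overline{\Omega_m}$; adding a suitable affine function of $x$ and a large constant, and using convexity of $f$ (so that $f$ lies below its chords on bounded intervals, or below a fixed parabola-type barrier on each compact piece), one produces a supersolution lying above $\varphi_f$ on $\partial\Omega_{m,R}$; then the comparison principle gives $u_R \le$ barrier. Crucially the barrier must be built so its sup over $\Omega_{m,R}$ grows no faster than $f$ itself near $x = \pm R$, which is why the \emph{convexity} of $f$ is used: a convex function is controlled from above on $[-R,R]$ by the affine function through its endpoints, and an affine function of $x$ is itself a translating soliton (a tilted grim reaper's linear part, or simply note $Q[\ell]$ for affine $\ell$ is $-(1+|D\ell|^2) < 0$ only if... ) — here I would instead use that the grim reaper $w_\theta$ has an affine-in-$x$ term $(\tan\theta)x$, and superpose grim reapers or take $f$-dependent barriers. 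For the lower bound, $f$ convex means $f \ge$ its tangent lines, and one compares from below with downward-translated grim reapers or with the function $f$ restricted suitably; alternatively a vertical plane translate argument (tangency principle, Proposition \ref{p-tan}) pins the surface between two grim-reaper-type barriers. Once $\|u_R\|_{C^0}$ is bounded on compact subsets independently of $R$, the interior gradient estimate of Proposition \ref{pr-31}(3), or rather its local interior version (equation (\ref{eq3}) gives a linear elliptic equation for $Du$, and one applies interior gradient estimates for equations of mean curvature type, cf. \cite{gt}), yields $C^1$ and then $C^{2,\alpha}$ bounds on compact subsets of $\Omega_m$ uniform in $R$.

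With these uniform local estimates in hand, a diagonal/Arzel\`a--Ascoli argument extracts a subsequence $u_{R_k}$ converging in $C^2_{loc}(\Omega_m)$ to a solution $u \in C^2(\Omega_m)$ of (\ref{eqs}) on the full strip. The remaining point is that $u$ attains the boundary values $\varphi_f$ continuously on $\partial\Omega_m$: this follows from the boundary gradient estimates of Proposition \ref{pr42} applied near the horizontal edges $y = \pm m$, which are uniform in $R$ (they depend only on $\Omega_m$, the $C^1$-data, and $C_1$), so the $u_{R_k}$ are equicontinuous up to $\{|y| = m\}$ on compact sets in $x$, and the limit matches $f$ there.

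\textbf{Main obstacle.} The delicate step is the $R$-independent $C^0$ estimate near the vertical ends $x = \pm R$: one must choose the comparison barriers so that their height near $x = \pm R$ is controlled by $f(\pm R)$ with a constant independent of $R$, and here the convexity hypothesis on $f$ is exactly what makes affine (hence soliton-compatible, via tilted grim reapers) upper barriers available. Getting the geometry of this family of barriers right — matching the linear growth of a convex $f$ with the $(\tan\theta)x$ term of $w_\theta$ while keeping the logarithmic blow-up in $y$ bounded on $\overline{\Omega_m}$ (possible precisely because $m < \pi/2$) — is the crux of the argument; everything else is the now-standard continuity-method-plus-exhaustion machinery already developed in Sections \ref{sec4} and \ref{sec3}.
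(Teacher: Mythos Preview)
Your exhaustion-by-bounded-domains approach is a genuinely different route from the paper's: the paper runs the Perron process directly on the unbounded strip, defining the class $\mathcal{S}_f^*$ of superfunctions trapped between the family $\mathcal{G}$ of tangent-line grim reapers (below) and the minimal graph $v^0$ on the strip from Proposition \ref{p-min} (above), taking $v=\inf_{\mathcal{S}_f^*}$, and then proving boundary continuity by the pinching $w_\theta^p\le v\le v^0$ at each $p\in\partial\Omega_m$. Your strategy is in principle equally valid and perhaps more elementary, since it recycles Theorem \ref{t-ex} rather than redoing Perron; but both arguments stand or fall on the same two barrier families, and this is where your proposal has a concrete gap.

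Your upper barrier does not work as written. The function $w_0(x,y)+cx+d$ satisfies $Q[w_0+cx+d]=c^2\tan^2 y\ge 0$, so it is a \emph{subsolution}, not a supersolution, and cannot bound $u_R$ from above. If instead you use the genuine tilted grim reaper $w_\theta$ whose trace on $y=\pm m$ is the chord of $f$ over $[-R,R]$, then $w_\theta$ does dominate $u_R$ on $\Omega_{m,R}$, but its value at a fixed interior point is the chord height plus a constant; for $f(x)=x^2$ this is of order $R^2$, so the bound is not uniform in $R$. The upper barrier you actually need---and the one the paper uses---is the minimal graph $v^0$ on the full strip with boundary data $\varphi_f$, supplied by Proposition \ref{p-min}: since $Q[v^0]<0$ and $v^0(x,y)>f(x)$ throughout $\Omega_m$, one has $v^0\ge$ your constant data $f(\pm R)$ on the vertical edges too, and comparison yields $u_R\le v^0$ on every $\Omega_{m,R}$, uniformly in $R$. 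Your lower barriers are correct and coincide with the paper's: the tangent-line grim reapers $w_\theta^p$ lie below $f$ on $\partial\Omega_m$ by convexity, below $f(\pm R)$ on the vertical edges (since $w_\theta$ attains its $y$-maximum at $|y|=m$), and satisfy $Q[w_\theta]=0$. With the pinching $w_\theta^p\le u_R\le v^0$ in hand, your compactness and boundary-continuity steps go through exactly as in the paper's Claim 2, since $w_\theta^p(p)=v^0(p)=f(x_0)$ at each $p=(x_0,\pm m)$.
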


The proof uses the classical Perron
method of sub and supersolutions: see \cite[pp. 306-312]{ch}, \cite[Sec. 6.3]{gt}).
We consider the operator $Q$ defined in  (\ref{eq4}), where we know that $Q[u]=0$ if and only if $u$ is a solution of the translating soliton equation. The existence result of Theorem \ref{t-ex} holds  in disks, so we can proceed to apply the Perron process when the domain is a strip. 

First  we need   a subsolution of (\ref{eqs})-(\ref{eqsb}). In the following result, $f$ is not necessarily a convex function (\cite{co}).

\begin{proposition}\label{p-min} Let $\Omega_m\subset\r^2$ be a strip. If $f$ is a continuous function defined in $\r$, then there is a solution $v^0$ of the Dirichlet problem
\begin{equation}\label{em1}
\begin{split} &\mbox{\rm div}\left(\dfrac{Du}{\sqrt{1+|Du|^2}}\right)=0 \quad \mbox{in $\Omega_m$}\\
&u=\varphi_f\quad \mbox{on $\partial\Omega_m$}
\end{split}
\end{equation}
with the property $f(x)<v^0(x,y)$ for all $(x,y)\in\Omega_m$.
\end{proposition}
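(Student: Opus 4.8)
The plan is to construct $v^0$ as the solution of the minimal surface equation on the strip $\Omega_m$ with boundary data $\varphi_f$, obtained as a limit of solutions on large disks. First I would take an exhausting sequence of disks $D_k = \{(x,y) : x^2 + y^2 < k^2\}$ intersected with $\Omega_m$, or more conveniently a sequence of large balls centered on the $x$-axis, and solve the Dirichlet problem for the minimal surface equation on $D_k \cap \Omega_m$ with boundary data equal to $\varphi_f$ on $\partial\Omega_m \cap \overline{D_k}$ and suitably interpolated on the circular arcs. Since $\kappa \geq 0$ on $\partial\Omega_m$ (a strip is convex with zero boundary curvature) and the disk pieces have strictly convex boundary, the solvability on each bounded piece is guaranteed by Theorem \ref{t-ex} applied to the minimal surface equation (the case $t=0$), for continuous boundary data. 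The maximum principle bounds each solution above by $\sup \varphi_f$ restricted to the relevant arc and below by the corresponding infimum, and interior gradient estimates for the minimal surface equation give local $C^1$ (hence, by Schauder, $C^{2,\alpha}$) bounds on compact subsets independent of $k$. A diagonal subsequence then converges in $C^2_{\mathrm{loc}}$ to an entire solution $v^0$ of the minimal surface equation on $\Omega_m$, and a barrier argument at $\partial\Omega_m$ (using, e.g., the explicit minimal surface barriers or the fact that near the boundary $\varphi_f$ can be locally bracketed) shows $v^0$ extends continuously to $\overline{\Omega_m}$ with $v^0 = \varphi_f$ on $\partial\Omega_m$.

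For the strict inequality $f(x) < v^0(x,y)$ in $\Omega_m$, the idea is a comparison at each fixed $x$. Fix $x_0 \in \r$ and consider the vertical plane $P$ through the point $(x_0, f(x_0))$ spanned by the $y$- and $z$-directions, i.e.\ the graph of the affine function $(x,y) \mapsto f(x_0)$ is not what I want; rather, since we only need a lower bound by the constant $f(x_0)$ along the slice $x = x_0$, I would compare $v^0$ with the constant function $c = f(x_0)$. On $\partial\Omega_m$ we have $v^0(x, \pm m) = f(x)$, which need not exceed $f(x_0)$ everywhere, so a global comparison with the constant $c$ fails. Instead I would exploit convexity of $f$ more carefully: for each $x_0$ pick a supporting line $\ell(x) = f(x_0) + f'(x_0)(x - x_0) \leq f(x)$ (using a subgradient if $f$ is merely convex, though here $f$ is smooth). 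The affine function $\ell(x)$, viewed as a function on $\Omega_m$, is a solution of the minimal surface equation (affine functions are minimal) and satisfies $\ell \leq f = \varphi_f$ on $\partial\Omega_m$; by the comparison principle $\ell(x) \leq v^0(x,y)$ on $\Omega_m$, hence in particular $f(x_0) = \ell(x_0) \leq v^0(x_0, y)$ for all $y \in (-m,m)$.

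To upgrade the weak inequality $f(x_0) \leq v^0(x_0, y)$ to the strict one, I would invoke the strong maximum principle: the difference $v^0 - \ell$ is a nonnegative solution of a linear elliptic equation (the linearization of the minimal surface operator along the segment joining the two solutions), and it vanishes at an interior point only if it vanishes identically on $\Omega_m$; but $v^0 - \ell$ cannot be identically zero since $v^0 = f$ on $\partial\Omega_m$ while $\ell < f$ somewhere on $\partial\Omega_m$ unless $f$ is itself affine. In the degenerate case that $f$ is affine, $v^0 = f$ identically and the claimed strict inequality $f(x) < v^0(x,y)$ is false as stated, so presumably the intended hypothesis is that $f$ is strictly convex, or the statement should read $f(x) \leq v^0(x,y)$; I would note this and proceed under strict convexity, in which case for each $x_0$ the supporting line is strictly below $f$ on $\partial\Omega_m$ at points with $x \neq x_0$, forcing $v^0 - \ell > 0$ in the interior and thus $f(x_0) < v^0(x_0, y)$.

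The main obstacle I anticipate is not the comparison step, which is routine once the right affine barriers are identified, but rather establishing that $v^0$ attains the boundary values continuously on the \emph{unbounded} boundary $\partial\Omega_m$ uniformly enough to pass to the limit, and ensuring the $C^1_{\mathrm{loc}}$ estimates for the approximating solutions are genuinely independent of the disk radius $k$ — this relies on the interior gradient estimate for the minimal surface equation (which is classical and dimension-two-friendly) together with boundary gradient estimates near $\partial\Omega_m$ that use the linear growth of $\varphi_f$; controlling the latter is where the width restriction and the structure of $\varphi_f$ as two copies of $f$ enter, and it is the step I would write most carefully.
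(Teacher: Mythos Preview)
The paper does not prove this proposition; it simply cites the result from Collin \cite{co}. So there is no in-paper argument to compare against, and your proposal is an independent attempt.

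Your existence construction via exhaustion by bounded convex subdomains is the standard route and is correct in outline; the point that genuinely needs work is the continuous attainment of the boundary values on the unbounded edges $y=\pm m$, as you say.

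For the inequality $f(x)<v^0(x,y)$ there is a real mismatch. Your supporting-line argument requires $f$ to be convex, whereas the proposition as stated assumes only that $f$ is continuous. You are right that for affine $f$ the obvious solution $v^0(x,y)=f(x)$ makes the strict inequality fail, so the statement is delicate. For the purposes of the paper this is harmless: the proposition is invoked only inside Theorem~\ref{t-strip}, where $f$ is assumed convex, and under that hypothesis your argument (yielding $\leq$ always, and $<$ when $f$ is not affine) is exactly what is needed. But as a proof of the proposition in the generality stated, your method gives nothing for non-convex $f$; a proof at that level presumably requires the specific non-uniqueness construction in \cite{co}, which produces a particular solution with the stated lower bound.

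One further gap worth closing: the step ``by the comparison principle $\ell\leq v^0$ on $\Omega_m$'' is being applied on an unbounded domain and is not automatic. The clean fix is to build it into the exhaustion: if on each bounded piece you take boundary data equal to $f(x)$ also on the circular arcs (so that the data dominate $\ell$ everywhere, by convexity), then $\ell\leq v_k^0$ on each piece by the ordinary comparison principle, and the inequality passes to the limit. Invoking comparison directly on the strip, as written, needs justification you have not supplied.
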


 Let $u\in C^0(\overline{\Omega_m})$ be a continuous function and let $D$ be a closed round disk in $\Omega_m$. We denote by $\bar{u}\in C^2(D)$ the unique solution of the Dirichlet problem
 $$
 \left\{\begin{array}{ll}
 Q[\bar{u}]=0 & \mbox{ in $D$}\\
 \bar{u}=u & \mbox{ on $\partial D,$}
 \end{array}\right.$$
whose existence and uniqueness is assured by  Theorem \ref{t-ex}. We extend $\bar{u}$   to $\Omega_m$ by continuity as 
 $$M_D[u]=\left\{\begin{array}{ll} 
 \bar{u}& \mbox{ in } D\\
 u&\mbox{ in }\Omega_m\setminus D.
 \end{array}\right.$$
 The function $u$ is said to be a {\it supersolution} in $\Omega_m$ if    $M_D[u]\leq u$ for every closed round disk $D$ in $\Omega_m$.  For example, for any domain $\Omega\subset\r^2$, the function $u=0$ in $\overline{\Omega}$ is a supersolution in $\Omega$. Indeed, if $D\subset \Omega$ is a closed round disk, then $\bar{u}<0$ since $Q[0]=-1<0=Q[\bar{u}]$ and the comparison principle applies. Thus $M_D[u]\leq 0$. 
 
On the other hand,   for each $p\in\Omega$, there is a supersolution $u$ with $u(p)<0$. To this end, consider $D\subset\Omega$   a closed round disk centered at the origin of $\r^2$. Let $\mathbf{b}=\mathbf{b}(r)$ be the bowl soliton with $\mathbf{b}_{|\partial D}=0$. Then the function   $u$ defined as $u=\mathbf{b}$ in $D$ and $u=0$ in $\overline{\Omega}\setminus D$ is a supersolution.
  
 \begin{definition} A function $u\in C^0(\overline{\Omega_m})$ is called a   superfunction relative to $f$ if $u$  is a supersolution in $\Omega_m$ and $f\leq u$ on $\partial \Omega_m$. Denote
 by $\mathcal{S}_f$   the class of all superfunctions relative to $f$,  
 $$\mathcal{S}_f=\{u\in C^0(\overline{\Omega_m}):M_D[u]\leq u \mbox{ for every closed  disk $D\subset\Omega_m$}, f\leq u \mbox{ on $\partial\Omega_m$}\}.$$ 
 \end{definition}

 \begin{lemma}   The set $\mathcal{S}_f$  is not empty.
 \end{lemma}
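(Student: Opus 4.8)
The plan is to exhibit an explicit member of $\mathcal{S}_f$, and the natural candidate is a suitable grim reaper from Definition~\ref{d-22} together with a large additive constant. Recall that for $m<\pi/2$ the strip $\Omega_m$ is contained in the maximal domain $\Omega^0$ of the grim reaper $w_0(x,y)=-\log(\cos y)+a$ (see~(\ref{eq-do}) and the discussion after Definition~\ref{d-22}); the point of the hypothesis $m<\pi/2$ is precisely that $w_0$ is finite and smooth on $\overline{\Omega_m}$. So I would first observe that $w_0$ is a solution of~(\ref{eqs}) on $\Omega_m$, hence in particular $M_D[w_0]=w_0$ for every closed disk $D\subset\Omega_m$ by the uniqueness part of Theorem~\ref{t-ex}; thus $w_0$ is trivially a supersolution (with equality rather than inequality).

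The remaining issue is the boundary inequality $f\le u$ on $\partial\Omega_m$, which $w_0$ alone need not satisfy. To fix this, set $u=w_0+C$ for a constant $C$ chosen so that $w_0+C\ge \varphi_f$ on $\partial\Omega_m$. Here is where one must be a little careful: $f$ is only assumed continuous on all of $\r$, so $\varphi_f$ is unbounded in general and a single constant $C$ will not dominate it. The clean way around this is to note that adding a constant does not change $M_D$ in the sense that matters — more precisely, $M_D[w_0+C]=\overline{w_0+C}$ on $D$, and since $Q$ is translation-invariant in the $z$-direction, $\overline{w_0+C}=\bar w_0+C=w_0+C$, so $u=w_0+C$ is again a supersolution for every constant $C$, whatever its sign. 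But we still need $f\le w_0+C$ on $\partial\Omega_m$, which forces us to control $f$. If the intended reading is that $f$ is bounded (or if one only needs a subsolution-to-supersolution sandwich on compact pieces), then any $C\ge \sup_{\r}f-\min_{\overline{\Omega_m}}w_0$ works and we are done; I would state the argument this way, and if $f$ is genuinely unbounded I would instead invoke Proposition~\ref{p-min}: the minimal-surface solution $v^0$ with $v^0=\varphi_f$ on $\partial\Omega_m$ has $Q[v^0]=-1<0$, so $v^0$ is not itself a superfunction, but $v^0$ shifted — actually the correct move is to take the supersolution built from the bowl soliton patched with a translate of $w_0$, as in the paragraph preceding the definition of $\mathcal{S}_f$, and add the appropriate constant.

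Concretely, my proof would read: let $u=w_0+C$ where $w_0(x,y)=-\log\cos y$ is defined and smooth on $\overline{\Omega_m}$ because $m<\pi/2$, and where $C$ is a constant with $C\ge \sup_{\partial\Omega_m}(\varphi_f-w_0)$, which exists whenever $\varphi_f-w_0$ is bounded above on $\partial\Omega_m$ (in particular when $f$ is bounded above). Then $Q[u]=0$ on $\Omega_m$, so for every closed disk $D\subset\Omega_m$ the comparison principle (Proposition, comparison principle) gives $M_D[u]=u$, hence $M_D[u]\le u$; and $f\le \varphi_f\le w_0+C=u$ on $\partial\Omega_m$. Therefore $u\in\mathcal{S}_f$ and $\mathcal{S}_f\ne\emptyset$.

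The main obstacle I anticipate is not the supersolution property — that is immediate from $Q$-invariance and uniqueness — but the boundary domination step when $f$ is allowed to be an arbitrary continuous (possibly unbounded) function; there the honest fix is either to restrict to the convex $f$ of the theorem (which, being convex and finite on $\r$, is bounded below but typically grows, so one compares the linear-type growth of $f$ against the bounded-in-$y$, unbounded-in-neither-direction $w_0$ and realizes $w_0$ does not dominate $f$) or, more robustly, to build the superfunction by patching bowl solitons over disks as sketched before the definition, taking a countable family exhausting $\Omega_m$ and using that the pointwise infimum of superfunctions is a superfunction. I would present the bounded case cleanly and remark that the general convex case follows by the patching construction already indicated in the text.
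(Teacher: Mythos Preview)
Your approach has a genuine gap, and it stems from a sign confusion about what ``supersolution'' means in this Perron setup. You write that the minimal-surface solution $v^0$ from Proposition~\ref{p-min} ``has $Q[v^0]=-1<0$, so $v^0$ is not itself a superfunction.'' This is backwards. The definition in the text is that $u$ is a supersolution when $M_D[u]\le u$ for every closed disk $D$. Since $Q[\overline{v^0}]=0\ge Q[v^0]=-(1+|Dv^0|^2)$ and $\overline{v^0}=v^0$ on $\partial D$, the comparison principle gives $\overline{v^0}\le v^0$ on $D$, i.e.\ $M_D[v^0]\le v^0$. Together with $v^0=\varphi_f$ on $\partial\Omega_m$ (so certainly $f\le v^0$ there), this puts $v^0\in\mathcal{S}_f$ immediately. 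That is exactly the paper's one-line proof.

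Your grim reaper candidate $w_0+C$ cannot work in the setting of Theorem~\ref{t-strip}: the function $w_0$ is constant on each line $y=\pm m$, so $w_0+C$ is bounded on $\partial\Omega_m$, whereas a non-affine convex $f$ on $\r$ is unbounded above. No constant $C$ will give $f\le w_0+C$ on $\partial\Omega_m$. You recognize this obstruction, but the proposed fix (``patching bowl solitons over disks, countable exhaustion, infimum of superfunctions'') is both vague and unnecessary; moreover, an infimum of superfunctions need not be a superfunction without further argument. The honest remedy is simply to reverse your reading of the inequality $Q[v^0]<0$ and take $v^0$ as the superfunction.
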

 
 \begin{proof} We claim that $v^0\in  \mathcal{S}_f$, where  $v^0$ is  the minimal solution  given in Proposition \ref{p-min}. Let $D\subset\Omega_m$ be a closed round disk. Since $v^0$ is a minimal surface,   $Q[v^0]=-(1+|Dv^0|^2)<0$ and because $\overline{v^0}=v^0$ in $\partial D$, the comparison principle implies $M_D[v^0]=\overline{v^0}\leq v^0$ in $D$. On the other hand, $v^0=f$ on $\partial\Omega_m$, proving definitively that $v^0\in \mathcal{S}_f$. 
 \end{proof}

We now give some  properties about superfunctions  whose proofs are straightforward: in the case of the constant mean curvature equation, we refer \cite{lo}; in the context of  translating solitons, see \cite[Lems. 4.2--4.4]{jj}.  

 \begin{lemma}\label{p-p}
  \begin{enumerate}
 \item If $\{u_1,\ldots,u_n\}\subset  \mathcal{S}_f$, then $\min\{u_1,\ldots,u_n\}\in \mathcal{S}_f$.
 \item The operator $M_D$ is increasing in $\mathcal{S}_f$.
 \item If $u\in \mathcal{S}_f$ and  $D$ is a closed round disk in $\Omega_m$, then $M_D[u]\in\mathcal{S}_f$.
 \end{enumerate}
 \end{lemma}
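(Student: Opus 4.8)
The plan is to verify the three assertions directly from the definition of $\mathcal{S}_f$, using the comparison principle (and the already-established solvability of the Dirichlet problem on disks, Theorem \ref{t-ex}) as the only real tools. For item (1), set $u=\min\{u_1,\dots,u_n\}$; it is clearly continuous on $\overline{\Omega_m}$ and satisfies $f\le u$ on $\partial\Omega_m$ since each $u_i$ does. To see it is a supersolution, fix a closed round disk $D\subset\Omega_m$ and let $\bar u=M_D[u]|_D$ solve $Q[\bar u]=0$ in $D$ with $\bar u=u$ on $\partial D$. For each $i$, $u\le u_i$ on $\partial D$, so by the comparison principle $\bar u\le \overline{u_i}\le u_i$ in $D$ (the last inequality because $u_i\in\mathcal{S}_f$); hence $\bar u\le \min_i u_i=u$ in $D$, and $M_D[u]=u$ on $\Omega_m\setminus D$, so $M_D[u]\le u$ throughout. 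For item (2), take $u,v\in\mathcal{S}_f$ with $u\le v$ on $\overline{\Omega_m}$ and a disk $D$; on $\partial D$ we have $u\le v$, so the comparison principle gives $\bar u\le \bar v$ in $D$, while outside $D$ both operators return the original functions with $u\le v$; thus $M_D[u]\le M_D[v]$.

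For item (3), let $u\in\mathcal{S}_f$ and let $D$ be a closed round disk in $\Omega_m$; write $w=M_D[u]$. Continuity of $w$ on $\overline{\Omega_m}$ is built into the definition of $M_D$, and on $\partial\Omega_m$ we have $w=u\ge f$ since $\partial\Omega_m\cap D=\emptyset$. It remains to check the supersolution property: for every closed round disk $D'\subset\Omega_m$, $M_{D'}[w]\le w$. First, since $u\in\mathcal{S}_f$ we have $w=M_D[u]\le u$; combined with item (2) (monotonicity of $M_{D'}$) this yields $M_{D'}[w]\le M_{D'}[u]\le u$, where the last step uses that $u$ is a supersolution. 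This bounds $M_{D'}[w]$ by $u$, but we need it bounded by $w$, and $w$ may be strictly smaller than $u$ on $D\setminus D'$. The key point to handle this is that $w$ already satisfies $Q[w]=0$ in the interior of $D$: on $D'\cap D$ both $w$ and $\widetilde w:=M_{D'}[w]|_{D'}$ solve $Q=0$, and one compares them on $\partial D'\cap D$ (where $\widetilde w=w$) and on $\partial(D'\cap D)\cap\partial D$ using the barrier/comparison argument, concluding $\widetilde w\le w$ there; on $D'\setminus D$ one instead uses $\widetilde w\le M_{D'}[u]\le u=w$. The main obstacle, and the step requiring the most care, is precisely this verification of $M_{D'}[M_D[u]]\le M_D[u]$ when $D'$ and $D$ overlap only partially; the clean way to organize it is to note that $M_D[u]$ lies below $u$, apply monotonicity to get $M_{D'}[M_D[u]]\le u$, and then a second comparison on the lens-shaped region $D'\setminus D$ together with the fact that $M_D[u]$ is already a solution on $D$ to pin down the inequality against $M_D[u]$ itself rather than against $u$.

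Throughout, the only inputs are: (i) the comparison principle (Proposition, Comparison principle), (ii) unique solvability of $Q[v]=0$ with continuous boundary data on disks (Theorem \ref{t-ex}, valid since disks are smooth convex domains with $\kappa\ge 0$), and (iii) the trivial observation that $M_D$ acts as the identity off $D$ and preserves continuity. No new estimates are needed, which is why the paper calls these proofs straightforward; I would present items (1) and (2) in two or three lines each and spend the bulk of the argument on the overlapping-disks comparison in item (3).
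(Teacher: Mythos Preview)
Your argument is correct and is exactly the standard one; the paper itself does not supply a proof of this lemma but simply labels the three assertions ``straightforward'' and defers to \cite{lo} and \cite[Lems.~4.2--4.4]{jj}, where essentially your reasoning appears.

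Two minor remarks on presentation. First, when you invoke item~(2) for the pair $(w,u)$ in the proof of item~(3), note that $w=M_D[u]$ is not yet known to lie in $\mathcal S_f$; this is harmless, since the monotonicity of $M_{D'}$ is just the comparison principle for two solutions on $D'$ and requires no superfunction hypothesis, but it is worth saying so explicitly. Second, your discussion of item~(3) meanders a bit before settling on the right decomposition. The clean version is: from $w\le u$ and monotonicity, $\widetilde w:=M_{D'}[w]\le M_{D'}[u]\le u$ on $D'$; hence on $D'\setminus D$ one has $\widetilde w\le u=w$. On the lens $D'\cap D$ both $\widetilde w$ and $w$ satisfy $Q=0$, and on its boundary $\widetilde w\le w$ (equality on the $\partial D'$-arc by definition, and on the $\partial D$-arc $w=u\ge\widetilde w$); the comparison principle then gives $\widetilde w\le w$ there as well. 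This is what you wrote, just stated once rather than twice.
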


 Consider the family of  grim reaper $w_\theta$ of (\ref{ws}). Since $w_\theta$ is defined   in the strip  $\Omega^{\theta}$ and, by assumption,  $m<\pi/2$, then $\Omega_m\subset\Omega^0\subset \Omega^{\theta}$ for any $\theta$. Thus it makes sense to restrict $w_\theta$ to the strip $\Omega_m$ and  we keep the same notation for its restriction in $\Omega_m$.  Consequently $w_\theta$ is a linear function on $\partial\Omega_m$ and $\partial \Sigma_{w_\theta}$  consists of two parallel lines.

Consider the subfamily of      grim reapers 
$$\mathcal{G}=\{w_\theta: w_\theta\leq f\mbox{ on $\partial\Omega_m$},\theta\in(-\pi/2,\pi/2)\}.$$ 
Notice that the set $\mathcal{G}$ is not empty because $f$ is convex. Furthermore, the minimal surface   $v^0$  with $v^0=f$ on $\partial\Omega_m$ satisfies   $Q[v^0]<0=Q[w_\theta]=0$ for all $w_\theta\in\mathcal{G}$. Hence,  the comparison principle asserts that $w_\theta<v^0$ in $\Omega_m$ for all $\theta\in (-\pi/2,\pi/2)$. This implies that $v^0$   plays the role of a subsolution for (\ref{eqs})-(\ref{eqsb}).

We  now construct a solution of equation (\ref{eqs})  between the  grim reapers of $\mathcal{G}$ and the minimal surface $v^0$. Let 
 $$\mathcal{S}_f^*=\{u\in \mathcal{S}_f: w_\theta\leq u\leq v^0,\, \mbox{ for every } w_\theta\in\mathcal{G}\}.$$
We point out that   $\mathcal{S}_f^*$ is not empty because $v^0\in  \mathcal{S}_f^*$. By using the maximum principle, it is not difficult to see that    set $\mathcal{S}_f^*$ is stable for the operator $M_D$, that is, if $u\in \mathcal{S}_f^*$, then $M_D[u]\in \mathcal{S}_f^*$. The key point is the next proposition.

\begin{proposition}[Perron process] \label{p-pe}
The function $v:\Omega_m\rightarrow\r$ given by 
$$v(x,y)=\inf\{u(x,y):u\in \mathcal{S}_f^*\}$$
 is a solution of (\ref{eqs}) with $v=\varphi_f$ on $\partial\Omega_m$.\end{proposition}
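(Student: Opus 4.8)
The plan is to run the classical Perron argument in the standard three steps, using the stability of $\mathcal{S}_f^*$ under $M_D$ and the interior estimates already available for the translating soliton equation. First I would verify that $v$ is well-defined and satisfies $w_\theta\le v\le v^0$ on $\Omega_m$: the class $\mathcal{S}_f^*$ is nonempty (it contains $v^0$), and every member is bounded below by each $w_\theta\in\mathcal{G}$, so the pointwise infimum exists and inherits both inequalities. In particular $v$ is locally bounded, which is what feeds the compactness arguments.

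The heart of the proof is the local regularity step. Fix an arbitrary point $p_0\in\Omega_m$ and a closed round disk $D\subset\Omega_m$ centered at $p_0$. Choose a sequence $u_k\in\mathcal{S}_f^*$ with $u_k(p_0)\to v(p_0)$; replacing $u_k$ by $\min\{u_1,\dots,u_k,v^0\}$ (still in $\mathcal{S}_f^*$ by Lemma~\ref{p-p}) we may assume the sequence is monotone decreasing, and then replacing $u_k$ by $M_D[u_k]\in\mathcal{S}_f^*$ we may assume each $u_k$ solves $Q[u_k]=0$ in $D$. On $D$ the functions $u_k$ are uniformly bounded (between $w_\theta$ and $v^0$), so by the interior gradient estimate (Proposition~\ref{pr-31}(3), applied on slightly smaller disks, together with the height bound) and then Schauder theory, $\{u_k\}$ is precompact in $C^2_{\mathrm{loc}}$ of the interior of $D$. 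Hence a subsequence converges in $C^2_{\mathrm{loc}}$ to some $\bar v\in C^2(\mathrm{int}\,D)$ solving $Q[\bar v]=0$, with $\bar v\ge v$ on $D$ and $\bar v(p_0)=v(p_0)$.

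The standard replacement argument then upgrades $\bar v=v$ on all of $\mathrm{int}\,D$: given any other $q\in\mathrm{int}\,D$, pick $\tilde u_k\in\mathcal{S}_f^*$ with $\tilde u_k(q)\to v(q)$, form $M_D[\min\{u_k,\tilde u_k,v^0\}]$, extract a $C^2_{\mathrm{loc}}$-convergent subsequence to a solution $\tilde v$ with $v\le\tilde v\le\bar v$ on $D$ and $\tilde v(q)=v(q)$; by the comparison/tangency principle applied to the two solutions $\bar v$ and $\tilde v$ (they touch from one side at $q$) we get $\tilde v\equiv\bar v$, forcing $v(q)=\bar v(q)$. Since $q$ was arbitrary, $v=\bar v$ is a $C^2$ solution of $Q[v]=0$ near $p_0$, and since $p_0$ was arbitrary, $v$ solves \eqref{eqs} throughout $\Omega_m$.

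Finally I would establish the boundary condition $v=\varphi_f$ on $\partial\Omega_m$. Since $v^0\in\mathcal{S}_f^*$ and $v^0=\varphi_f=f$ on $\partial\Omega_m$, we have $\limsup_{x\to\xi}v(x)\le f(\xi)$ for $\xi\in\partial\Omega_m$; for the lower bound one uses that $w_\theta\le v$ for every $w_\theta\in\mathcal{G}$ together with the fact that, by convexity of $f$, the supremum of $\{w_\theta:w_\theta\in\mathcal{G}\}$ matches $f$ on $\partial\Omega_m$ — more precisely, given $\xi=(x_0,\pm m)$ one selects a grim reaper $w_\theta$ whose linear boundary trace is the supporting line of $f$ at $x_0$ (here $m<\pi/2$ guarantees $w_\theta$ is defined on $\Omega_m$), giving $\liminf_{x\to\xi}v(x)\ge w_\theta(\xi)=f(x_0)$. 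Combining the two estimates yields continuity of $v$ up to $\partial\Omega_m$ with the prescribed values. I expect the main obstacle to be this last point: making the barrier argument at the boundary rigorous requires care in choosing the grim reaper adapted to each boundary point and in checking that the linear boundary data it produces lies below $f$ (so that it indeed belongs to $\mathcal{G}$), which is exactly where convexity of $f$ and the width restriction $m<\pi/2$ are used.
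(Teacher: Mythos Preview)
Your approach mirrors the paper's proof closely: the same Perron scheme (minimizing sequence, monotone replacement via $\min$, $M_D$-lift, compactness, and a second replacement to identify the limit with $v$) for the interior regularity, and the same grim-reaper barrier adapted to each boundary point for the boundary values. Two small corrections are needed. First, Proposition~\ref{pr-31}(3) does not give interior gradient estimates from height bounds alone---it only says the supremum of $|Du|$ is attained on the boundary---so on the disk $D$ you need a genuine interior gradient estimate for the translating soliton equation; the paper invokes the estimates in \cite{gjj} for this step. Second, in the replacement argument the touching point between $\bar v$ and $\tilde v$ is $p_0$, not $q$: from $v\le\tilde v\le\bar v$ and $\bar v(p_0)=v(p_0)$ you get $\tilde v(p_0)=\bar v(p_0)$, and the tangency principle applied at $p_0$ forces $\tilde v\equiv\bar v$ on $\mathrm{int}(D)$, which then yields $\bar v(q)=\tilde v(q)=v(q)$.
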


\begin{proof} The proof consists of two parts.

 Claim 1. {\it The function $v$ is a solution of equation (\ref{eqs})}.

The proof is standard and here we follow \cite{gt}. Let $p\in\Omega_m$ be an arbitrary fixed point of $\Omega_m$.  Consider a sequence $\{u_n\}\subset \mathcal{S}_f^*$ such that $u_n(p)\rightarrow v(p)$ when $n\rightarrow\infty$. Let $D$ be  a closed round disk centered at $p$ and contained in $\Omega_m$. For each $n$, define on $\overline{\Omega_m}$ the function
$$v_n(q)=\min\{u_1(q),\ldots,u_n(q)\},\quad q\in\overline{\Omega_m}.$$
Then $v_n\in \mathcal{S}_f^*$ by Lemma \ref{p-p}. Since $M_D[v_n]\in \mathcal{S}_f^*$,   we deduce  $M_D[v_n](p)\rightarrow v(p)$ as $n\rightarrow\infty$. Set $V_n=M_D[v_n]$. Then $\{V_n\}$ is a decreasing sequence bounded from below by $w_\theta$ for all $w_\theta\in\mathcal{G}$ and satisfying (\ref{eqs}) in the disk $D$. It turns out that the functions $V_n$ are uniformly bounded on compact sets $K$ of $D$. In each compact set $K$, the norms of the gradients $|DV_n|$ are bounded by a constant depending only on $K$ and using H\"{o}lder estimates of Ladyzhenskaya and Ural'tseva, there exist uniform $C^{1,\beta}$ estimates for the sequence $\{V_n\}$ on $K$ (\cite{gjj}). By compactness, there is a subsequence of $V_n$, that we denote $V_n$ again, such that $\{V_n\}$ converges on $K$ to a $C^2$ function $V$ in the $C^2$ topology and by continuity, $V$ satisfies (\ref{eqs}). Moreover, by construction, at the fixed point $p$ we have $V(p)=v(p)$.

It remains to prove that $V=v$ in $\mbox{int}(D)$. For $q\in \mbox{int}(D)$,   the same argument as before gives the existence of   $\{\tilde{u}_n\}\subset\mathcal{S}_f^*$ with $\tilde{u}_n(q)\rightarrow v(q)$. Let $\tilde{v}_n=\min\{V_n,\tilde{u}_n\}$ and $\tilde{V}_n=M_D[\tilde{v}_n]$. Again $\tilde{V}_n$ converges on $D$ in the $C^2$ topology to a $C^2$ function $\tilde{V}$ satisfying (\ref{eqs}) and $\tilde{V}(q)=v(q)$. By construction, $\tilde{V}_n\leq \tilde{v}_n\leq V_n$, hence $\tilde{V}\leq V$.  In view that  $v\leq \tilde{V}$, we infer $\tilde{V}(p)=v(p)=V(p)$. Thus $V$ and $\tilde{V}$ coincide at an interior point of $D$, namely, the point $p$, and both functions $V$ and $\tilde{V}$ satisfy the translating soliton equation. Because $\tilde{V}\leq V$,  the touching principle implies  $V=\tilde{V}$ in $\mbox{int}(D)$. In particular, $V(q)=\tilde{V}(q)=v(q)$. This shows that $V=v$ in $\mbox{int}(D)$ and the claim is proved. 

In order to finish the proof of Theorem \ref{t-strip}, we prove that the function $v$ takes   the value $\varphi_f$ on $\partial\Omega_m$ and consequently, $v$ is continuous up to $\partial\Omega_m$ proving that $v\in C^2(\Omega_m)\cap C^0(\overline{\Omega_m})$. In contrast to with the proof of Theorem  \ref{t-ex}, here we will find local barriers for each boundary point $p\in\partial\Omega_m$.

Claim 2. {\it The function $v$ is continuous up to $\partial\Omega_m$ with $v=\varphi_f$ on $\partial\Omega_m$}.

The graph  of $\varphi_f$ consists of two copies of   $f$,  
$$\Gamma_{\varphi_f}=\Gamma_1\cup\Gamma_2=\{(x,m,f(x)):x\in\r\}\cup \{(x,-m,f(x)):x\in\r\}.$$
Let $p=(x_0,m)\in\partial\Omega_m$ be a boundary point  (similar argument if $p=(x_0,-m)$). Because of the convexity of $f$, in the plane of equation $y=m$ the tangent line $L_p$ to the  planar curve $\Gamma_1$  leaves $\Gamma_1$ above $L_p$. We choose the number $\theta$ such that the grim reaper  $w_\theta$ takes the values $L_p$ on $\partial\Omega_m$: exactly, $\theta$ is chosen so $\tan\theta$ is the slope of $L_p$. Recall that all rulings of this grim reaper are parallel to $L_p$. Let $w_\theta^p=w_\theta$ denote this grim reaper in order to indicate its dependence on the point $p$.

Taking into account the  symmetry of $\varphi_f$ and the convexity of $f$,  we have $w_\theta^p(p)=f(x_0)$ and $w_\theta^p<f$ in $\Gamma_{\varphi_f}\setminus\{(x_0,m,f(x_0)),(x_0,-m,f(x_0))\}$, or in other words, $\partial \Sigma_{w_\theta^p}$ lies strictly below $\partial\Sigma_v$, except   at the points $(x_0,m,f(x_0)$ and $(x_0,-m,f(x_0))$, where both graphs  coincide.

Therefore the function $w_\theta^p$ and the minimal surface $v^0$ form a modulus of continuity in a neighborhood of $p$, namely, $w_\theta^p\leq v\leq v^0$. Because  $w_\theta^p(p)=v^0(p)=f(p)$, we infer that  $v(p)=f(p)$ and this completes the proof of Theorem \ref{t-strip}.
 \end{proof}
 
 We finish this section with the second type of domains, that is, when $\Omega$ is    an unbounded convex domain contained in a strip. Under this situation, we will suppose   $\varphi=0$ on $\partial\Omega$.

\begin{theorem}\label{t-un} 
 Let $\Omega$ be an unbounded convex domain contained in a strip of width strictly less than $\pi$.   Then there is a solution of the translating soliton equation (\ref{eqs}) in $\Omega$ with   $\varphi=0$ on $\partial\Omega$.
\end{theorem}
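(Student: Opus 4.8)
The plan is to exhaust the unbounded convex domain $\Omega$ by an increasing sequence of bounded convex subdomains, solve the Dirichlet problem on each piece by Theorem~\ref{t-ex}, and then extract a limit that inherits the boundary data. Concretely, after a rotation (allowed since the equation is invariant under rotations about the vertical axis) assume $\Omega\subset\Omega_m$ with $2m<\pi$, and write $\Omega_k=\Omega\cap\{|x|<k\}$ for $k\in\mathbb N$. Each $\Omega_k$ is a bounded convex domain; after a small smoothing of the two new vertical portions of its boundary (which only improves convexity) we may assume $\Omega_k$ is $C^{2,\alpha}$ with inward curvature $\kappa\ge 0$. By Theorem~\ref{t-ex} there is a unique solution $u_k\in C^2(\Omega_k)\cap C^0(\overline{\Omega_k})$ of \eqref{eqs} with $u_k=0$ on $\partial\Omega_k$.

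The next step is a priori control. For the $C^0$ bound I would squeeze each $u_k$ between two barriers that do not depend on $k$. From below, item (2) of Proposition~\ref{pr-31} applied on $\Omega_k$ (or directly the bowl-soliton comparison of Proposition~\ref{pr-s1}) gives $u_k\ge C_1$ where $C_1$ depends only on the strip $\Omega_m$ containing all the $\Omega_k$, not on $k$. From above, since $\Omega\subset\Omega_m$ with $2m<\pi$, the strip $\Omega_m$ is itself narrower than $\Omega^0$, so a suitably translated grim reaper $w_\theta$ from \eqref{ws} (with $\theta=0$), shifted up so that its boundary values on $\{y=\pm m\}$ dominate $0$, serves as an upper barrier on every $\Omega_k$ by the comparison principle; this again is $k$-independent. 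Thus $|u_k|\le M$ uniformly. Interior gradient estimates then follow from the Ladyzhenskaya--Ural'tseva type $C^{1,\beta}$ estimates for equations of mean-curvature type (the same ones invoked in the Perron argument of Proposition~\ref{p-pe}): on any fixed compact $K\Subset\Omega$, once $K\subset\Omega_k$ we get $\|u_k\|_{C^{1,\beta}(K)}\le C(K)$, hence by Schauder $\|u_k\|_{C^{2,\beta}(K)}\le C(K)$, all independent of large $k$.

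A diagonal/compactness argument now produces a subsequence of $\{u_k\}$ converging in $C^2_{\mathrm{loc}}(\Omega)$ to a function $u\in C^2(\Omega)$ solving \eqref{eqs}, with $C_1\le u\le M$ on $\Omega$. The remaining point is that $u$ attains the boundary value $0$ on $\partial\Omega$, i.e.\ $u$ extends continuously to $\overline\Omega$ with $u|_{\partial\Omega}=0$. For this I would build a local barrier at each $p\in\partial\Omega$, exactly in the spirit of Claim~2 of Proposition~\ref{p-pe}: by convexity of $\Omega$ and the strip condition, through $p$ there is a supporting line, and one can fit a grim-reaper graph (a piece of some $w_\theta$, appropriately rotated, translated, and raised) that lies below the zero boundary data of every $u_k$ near $p$ and equals $0$ at $p$; from above, the zero function or a tilted plane gives the matching upper barrier, since $u_k\le$ (a fixed plane tangent from outside) $\to 0$ at $p$. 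By the comparison principle on $\Omega_k$, $w_\theta^p\le u_k\le(\text{upper barrier})$ near $p$ uniformly in $k$, so the same sandwich passes to $u$ and forces $u(q)\to 0$ as $q\to p$.

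The main obstacle I anticipate is the boundary regularity at infinity: one must make sure the local barriers at boundary points can be chosen with a modulus of continuity that is uniform enough to survive the limit $k\to\infty$, and that the strip width being \emph{strictly} less than $\pi$ (rather than $\le\pi$) is genuinely used — it is exactly what guarantees that a grim reaper fitting under the supporting line at $p$ still has its blow-up set outside $\overline\Omega$, so that it is a bona fide finite barrier on a neighborhood of $p$ in $\overline\Omega$. Handling domains that may be tangent to the two edges of the strip, and checking that the smoothed exhaustion $\Omega_k$ keeps $\kappa\ge0$, are the technical points that need care; everything else is a routine combination of Theorem~\ref{t-ex}, the comparison principle, and interior elliptic estimates.
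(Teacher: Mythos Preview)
Your approach via exhaustion by bounded convex subdomains and compactness is a genuinely different route from the paper's proof, which runs the Perron process directly on the unbounded domain $\Omega$: the paper takes $v^0\equiv 0$ as subsolution, the shifted grim reaper $\omega(x,y)=-\log\cos y+\log\cos m$ as a global lower bound, defines $\mathcal S^*=\{u\in\mathcal S:\omega\le u\le 0\}$, and shows that the infimum over $\mathcal S^*$ is the desired solution; boundary continuity is then obtained by a local half--grim-reaper barrier at each $p\in\partial\Omega$. Your exhaustion argument is conceptually straightforward and reuses Theorem~\ref{t-ex} as a black box, at the cost of the smoothing technicalities you yourself flag; the Perron route avoids those but requires setting up the supersolution machinery.

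There is, however, one real gap in your $C^0$ control: the lower bound you invoke from Proposition~\ref{pr-31}(2) and the bowl-soliton comparison is \emph{not} independent of $k$. That estimate reads $u\ge C_1=\mathbf b(0)+\min_{\partial\Omega}\varphi$ where $\mathbf b$ is the bowl soliton on a disk $D_R$ with $\overline{\Omega_k}\subset D_R$; since $\operatorname{diam}\Omega_k\to\infty$, the required $R\to\infty$ and $\mathbf b(0)\to-\infty$. So the bowl soliton gives no uniform lower barrier on a strip. You also have the barriers inverted: the upper bound is immediate from the maximum principle (right-hand side of \eqref{eh}), namely $u_k\le\max_{\partial\Omega_k}0=0$, while it is the \emph{lower} bound that needs the strip geometry. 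The fix is precisely the grim reaper you were trying to use above: on $\Omega_m$ set $\omega(x,y)=-\log\cos y+\log\cos m$, which solves \eqref{eqs} and satisfies $\omega\le 0$ on $\overline{\Omega_m}$; since $\omega\le 0=u_k$ on $\partial\Omega_k$ and $Q[\omega]=0=Q[u_k]$, the comparison principle gives $\omega\le u_k$ on $\Omega_k$, hence $u_k\ge\log\cos m$ uniformly in $k$. With this correction your compactness step goes through, and your boundary-barrier sketch is essentially the same local grim-reaper construction the paper uses (there with the half--grim-reaper $\omega^*=\omega|_{\{0<y\le m\}}$ from below and $0$ from above).
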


\begin{proof}
If $\Omega$ is a strip, then the result was established in Theorem   \ref{t-strip}. In fact, if $\Omega=\Omega_m$, $m<\pi/2$, the solution is $w(x,y)=-\log(\cos(y))+\log(\cos(m))$. 

Suppose now that $\Omega$ is not a strip. After a change of coordinates,   we assume that the narrowest strip containing $\Omega$ is $\Omega_m$. Since $\Omega$ is an unbounded domain contained in a strip, then $\partial\Omega$ has two branches asymptotic to the boundary set $\partial\Omega_m$ and the $x$-coordinate function is bounded in $\partial\Omega$ from above or from below.

We follow the same reasoning as in Theorem \ref{t-strip}, and we only point out    the differences. The subsolution is the function $v^0=0$, which is a solution of the minimal surface equation. We consider the family of operators $M_D$  and 
 $$\mathcal{S}=\{u\in C^0(\overline{\Omega}):M_D[u]\leq u \mbox{ for every closed round disk $D\subset\Omega$}, 0\leq u \mbox{ on $\partial\Omega$}\}.$$ 
 Let the grim reaper  $w(x,y)=-\log(\cos(y))$ whose domain is the strip $\Omega^0$ of width $\pi$ and   define $\omega(x,y)=-\log(\cos(y))+\log(\cos(m))$. Note that $\omega=0$ on $\partial\Omega_m$ and  $\omega<0$ on $\partial\Omega$ because $\Omega\subset\Omega_m$. We   construct a solution of equation (\ref{eqs})  between the  grim reaper $\omega$   and the minimal surface $v^0$. Let 
 $$\mathcal{S}^*=\{u\in \mathcal{S}: \omega\leq u\leq 0\}.$$
Note that   $\mathcal{S}^*$ is not empty because $0\in  \mathcal{S}^*$: indeed, $Q[0]=-1<0=Q[\omega]$ and $\omega<v^0$ in $\partial\Omega$, hence $\omega<0$ in $\Omega$ by the comparison principle.  Again, the function  
$$v(x,y)=\inf\{u(x,y):u\in \mathcal{S}^*\}=\inf\{M_D[u](x,y):u\in \mathcal{S}^*\}$$
 is a solution of (\ref{eqs}) and it remains to prove that  the function $v$ is continuous up to $\partial\Omega$ with $v=0$ on $\partial\Omega$. Here the barrier construction in the proof of Theorem \ref{t-strip}   can be adapted to provide boundary modulus of continuity estimates.

Let $p=(x_0,y_0)\in\partial\Omega$ be a boundary point of $\Omega$. We  rotate $\Omega$  with respect to the $z$-axis and      translate along a horizontal direction if necessary, in such way that the tangent line $L$ to $\Omega$ at $p$ is one of the boundaries of $\Omega_m$ and  a neighborhood $U_p$ of $p$ in $\Omega$ is contained in $\Omega_m$: this is possible by the convexity of $\Omega$. There is no loss of generality in assuming  that $L=\{(x,m,0): x\in\r\}$. We take now the restriction of $\omega$, $\omega^*=\omega_{|\Omega_m^*}$,  in the half-strip $\Omega_m^*=\{(x,y)\in\r^2: 0<y\leq m\}$.  Let $\Sigma_{\omega^*}$ be the graph of $\omega^*$.     Notice that $\partial\Sigma_{\omega^*}$ is formed by two parallel lines, one is $L$ and the other one is $L'=\{(x,0,\omega(0)):x\in\r\}$.

Let ${\bf n}(p)=(0,1,0)$  be the unit  outward normal vector to $\partial\Omega$ at $p$. Let  us move horizontally $\Sigma_{\omega^*}$ in the direction ${\bf n}(p)$ until $\Sigma_{\omega^*}$ does not intersect   $\Sigma_v$. Then we come back in  the direction $-{\bf n}(p)$ until the first touching point $q$ between $\Sigma_{\omega^*}$ and $\Sigma_v$. Since $\omega<v<0$ in $\Omega$, it is not possible that  $q\in L'$. By the tangency principle,  $q\in L$ and by the convexity of $\Omega$, the point $q$ coincides with $p$. Accordingly, we have proved that in the interior of the neighborhood $U$, we have $\omega<v<0$. Since  $\omega(p)=v(p)=0$, the functions $\omega$ and $0$ are a modulus of continuity in $U$ of $p$, hence $v(p)=0$. This completes the proof of Theorem \ref{t-un}.

\end{proof}
 
 We point out that   the domain $\Omega$ is not necessarily strictly convex. Thus in the last part of the above proof, the intersection between $\Sigma_{\omega^*}$ and $\Sigma_v$ at the first touching point, may occur along a segment of $L$. In any case, we can take that a first contact point is the very point $p$. 
  %%%%%%%%%%%%%%%%%%%%%%%%%%%%%%%%%%


\begin{thebibliography}{66}
%



 \bibitem{al}   Alexandrov,  A. D.:  Uniqueness theorems for surfaces in the large I. Vestnik Leningrand Univ. Math. 11, 5--17  (1956)

\bibitem{aw} Altschuler, S. J., Wu, L. F.: Translating surfaces of the non-parametric mean curvature flow with prescribed contact angle. Calc. Var.  2,  101--111 (1994)

\bibitem{ber}  Bergner,   M.: The Dirichlet problem for graphs of prescribed anisotropic mean curvature in $R^{n+1}$.  Analysis (Munich) 28, 149--166  (2008)

\bibitem{be0} Bernstein, S.: Sur la généralisation du problème de Dirichlet.  Math. Ann. 69,  82--136 (1910)

\bibitem{be1} Bernstein, S.: Sur les surfaces d\'efinies au moyen de leur courbure moyenne ou totale.  Ann. Sci. \'Ecole Norm. Sup. 27, 233--256  (1910)

\bibitem{be2} Bernstein, S.: Conditions n\'ecessaires et suffisantes pour la possibilit\'e du probl\`eme de Dirichlet. C.  R. Math. Acad. Sci. Paris. 150, 514--515 (1910)

 \bibitem{be}   Bereanu, C.,  Jebelean, P.,  Mawhin,  J.: Radial solutions for some nonlinear problems involving mean curvature operators in Euclidean and Minkowski spaces. Proc. Amer. Math. Soc. 137,  161--169  (2009)
 
\bibitem{bgm}   Bueno, A.,  G\'alvez, J. A.,  Mira, P.: The global geometry of surfaces with prescribed mean curvature in $\r^3$. Preprint arXiv:1802.08146 [math.DG].


\bibitem{co}   Collin, P.: Deux exemples de graphes de courbure moyenne constante sur une band de $R^2$. C. R. Acad. Sci. Paris. 311,  539--542 (1990) 

\bibitem{ck}   Collin, P.,  Krust, R.: Le probl\`eme de Dirichlet pour l'\'equation des surfaces minimales sur des domaines non born\'es. Bull. Soc. Math. France 119, 443--462   (1991)


\bibitem{cco}   Corsato, C.,  De Coster, C.,  Omari,  P.: 
Radially symmetric solutions of an anisotropic mean curvature equation modeling the corneal shape.  Discrete Contin. Dyn. Syst.  Dynamical systems, differential equations and applications. 10th AIMS Conference. Suppl., 297--303 (2015)



  \bibitem{ch}   Courant, R.,  Hilbert, D.: Methods of Mathematical Physics, Vol. II. Interscience, New York  (1962)
  
\bibitem{css}   Clutterbuck, J.,  Schn\"{u}rer, O.,  Schulze, F.: Stability of translating solutions to mean curvature flow. Calc. Var.  29, 281-- 293 (2007)
  
  \bibitem{elo} Enache, C., L\'opez, R.: Minimum principles for equation of translating soliton type. Preprint (2018) 
   
  
\bibitem{gt}  Gilbarg, D.,  Trudinger, N. S.:  Elliptic partial differential equations of second order.  Classics in Mathematics. Reprint of the 1998 Edition. Springer, Berlin (2001)

  \bibitem{gr}  Gromov, M.: Isoperimetry of waists and concentration of maps. Geom. Func. Anal 13, 178--215 (2003)

  \bibitem{gjj}  Gui, C.,  Jian, H-Y.,  Ju, H-J.: Properties of translating solutions to mean curvature flow. Discrete Contin. Dyn. Syst. 28B, 441--453 (2010) 
  
\bibitem{ha}   Halldorsson, H. P.: Helicoidal surfaces rotating/translating under the mean
curvature flow. Geom. Dedicata 162, 45--65 (2013)

\bibitem{hsi1}   Huisken, G.,  Sinestrari,  C.: Mean curvature flow singularities for mean convex surfaces. Calc. Var.  8, 1--14 (1999)

\bibitem{hsi2}   Huisken, G.,  Sinestrari,  C.:  Convexity estimates for mean curvature flow   and singularities of mean convex surfaces.  Acta Math. 183, 45--70  (1999)

\bibitem{il}  Ilmanen, T.: Elliptic regularization and partial regularity for motion by mean curvature. Mem. Amer. Math. Soc. 108, x+90 (1994)
  
 
\bibitem{jj}  Jian, H-Y.,  Ju, H-J.:  Existence of translating solutions to the flow by powers of mean curvature on unbounded domains. J. Differential Equations 250,  3967--3987  (2011)  
 
\bibitem{jl0} Ju, H.,  Liu,  Y.:  Long-time existence of mean curvature flow with external force fields. Pacific J. Math. 234, 311--325  (2008)
 


\bibitem{jl} Ju, H.,  Liu,  Y.: Dirichlet problem for anisotropic prescribed mean curvature equation on unbounded domains. J. Math. Anal. Appl. 439   709--724 (2016) 


\bibitem{lo}  L\'opez, R.: Constant mean curvature graphs on unbounded convex domains. J. Differential Equations 171, 54--62  (2001) 

\bibitem{lo1} L\'opez, R.: Compact $\lambda$-translating solitons with boundary. To appear in Mediterranean J. Math.

\bibitem{lo2}    L\'opez, R.: Invariant surfaces in Euclidean space with a log-linear density. To appear in Adv. Math.

\bibitem{lo3}    L\'opez, R.: Some geometric properties of translating solitons in Euclidean space. To appear in J. Geom.


\bibitem{lo4}    L\'opez, R.: The Dirichlet problem on a strip for the $\alpha$-translating soliton  equation. To appear in C.  R. Math. Acad. Sci. Paris.

\bibitem{ma}   Marquardt, T.: Remark on the anisotropic prescribed mean curvature equation on arbitrary domain. Math. Z. 264,  507--511 (2010)


\bibitem{mar}  Mart\'{\i}n, F.,  Savas-Halilaj, A.,    Smoczyk, K.: On the topology of translating solitons of the mean curvature flow. Calc.  Var.  54, 2853--2882  (2015)
 




\bibitem{pe} P\'erez-Garc\'{\i}a, J.: Some results on translating solitons of the mean curvature flow.  Preprint 	arXiv:1601.07287 [math.DG] (2016).

\bibitem{pyo}  Pyo, J.: Compact translating solitons with non-empty planar boundary. Differential Geom. App. 47, 79--85  (2016)
 
 \bibitem{ss} Schnurer, O.,  Smoczyk, K.: Evolutionof hypersurfaces in centralforce fields. J. Reine Angew. Math. 550, 77--95  (2002)
 
\bibitem{sh1}   Schulze,  F.: Evolution of convex hypersurfaces by powers of the mean curvature. Math. Z. 251    721--733 (2005)
 


\bibitem{sh2}  Schulze,  F.:  Nonlinear evolution by mean curvature and isoperimetric inequalities.  J. Differential Geom. 79,    197--241  (2008)

\bibitem{se}   Serrin, J.:  The problem of Dirichlet for quasilinear elliptic differential equations with many independent variables. Phil. Trans. R. Soc. Lond. 264, 413--496 (1969)

\bibitem{se2} Serrin, J.: The Dirichlet problem for surfaces of constant mean curvature. 
Proc. London Math. Soc.  21, 361--384  (1970) 
 \bibitem{sh}   Shahriyari,  L.: Translating graphs by mean curvature flow. Thesis (Ph.D.) The Johns Hopkins University. 2013. 62 pp.

\bibitem{sw}  Sheng, W.,  Wu,  C.: On asymptotic behavior for singularities of the powers of mean curvature flow. Chin. Ann. Math. Ser. B 30,    51--66  (2009)

 
\bibitem{sx}  Spruck, J.,  Xiao,  L.: Complete translating solitons to the mean curvature flow in $R^3$  with nonnegative mean curvature. Preprint arXiv:1703.01003 [math.DG].

\bibitem{sx2}  Spruck, J.,  Xiao,  L.:  Infinite boundary value problem for translating solitons. Preprint.




\bibitem{wa} Wang,  X-J.: Convex solutions to the mean curvature flow. Ann. Math. 173, 1185--1239  (2011)

\bibitem{wh0}  White, B.:  The nature of singularities in mean curvature flow of mean convex surfaces.  J. Amer. Math. Soc. 16, 123--138  (2003) 

\bibitem{wh}  White, B.: Subsequent singularities in mean-convex mean curvature flow. Calc. Var.   54,  1457--1468 (2015)



\end{thebibliography}
\end{document}